\newcommand{\FF}{{\mathbb{F}}}
\newcommand{\QQ}{{\mathbb{Q}}}
\newcommand{\bC}{{\mathbf C}}
\newcommand{\bZ}{{\mathbf Z}}
\newcommand{\bO}{{\mathbf O}}
\newcommand{\bF}{{\mathbf F}}
\newcommand{\bN}{{\mathbf N}}
\newcommand{\bE}{{\mathbf E}}
\newcommand{\bH}{{\mathbf H}}
\newcommand{\cJ} {\mathcal J}
\newcommand{\SSS}{\mathsf{S}}
\newcommand{\AAA}{\mathsf{A}}
\newcommand{\CCC}{\mathsf{C}}
\newcommand{\DDD}{\mathsf{D}}
\newcommand{\bbS}{\mathbb{S}}
\newcommand{\Aut}{{{\operatorname{Aut}}}}
\newcommand{\Irr}{{{\operatorname{Irr}}}}
\newcommand{\Out}{{{\operatorname{Out}}}}
\newcommand{\GL}{\operatorname{GL}}
\newcommand{\PSL}{\operatorname{PSL}}
\newcommand{\PSp}{\operatorname{PSp}}
\newcommand{\Sp}{\operatorname{Sp}}
\newcommand{\SL}{\operatorname{SL}}
\newcommand{\SO}{\operatorname{SO}}
\newcommand{\Syl}{\operatorname{Syl}}
\newcommand{\Ker}{\operatorname{Ker}}
\newcommand{\Gal}{\operatorname{Gal}}
\newcommand{\tw}[1]{{}^{#1}\!}
\newcommand{\sym}{\SSS}
\newtheorem{thm}{Theorem}[section]
\newtheorem{lem}[thm]{Lemma}
\newtheorem{con}[thm]{Conjecture}
\newtheorem{pro}[thm]{Proposition}
\newtheorem{cor}[thm]{Corollary}
\newtheorem*{conA'}{Conjecture A'}
\newtheorem{thml}{Theorem}
\newtheorem{corl}[thml]{Corollary}
\theoremstyle{definition}
\numberwithin{equation}{section}
\def\irr#1{{\rm Irr}(#1)}
\def\aut#1{{\rm Aut}(#1)}
\def\cent#1#2{{\bf C}_{#1}(#2)}
\def\norm#1#2{{\bf N}_{#1}(#2)}
\def\zent#1{{\bf Z}(#1)}
   \def \mod#1{\, {\rm mod} \, #1 \, }
\newcommand{\type}{\operatorname}
\newcommand{\wt}{\widetilde}
\begin{document}

\title[A normal version of Brauer's height zero conjecture]{A normal version of Brauer's height zero conjecture}

\author{Alexander Moret\'o}
\address[A. Moret\'o]{Departament de Matem\`atiques, Universitat de Val\`encia, 46100
  Burjassot, Val\`encia, Spain}
\email{alexander.moreto@uv.es}

\author{A. A. Schaeffer Fry}
\address[A. A. Schaeffer Fry]{Dept. Mathematics - University of Denver, Denver, CO 80210, USA}
\email{mandi.schaefferfry@du.edu}


\thanks{We thank Gunter Malle for his comments on an earlier draft of this manuscript, as well as for helpful conversations on this problem during the  Isaac Newton Institute
program ``Groups, Representations, and
Applications: New Perspectives", supported by EPSRC grant EP/R014604/1, where this project was started.  The first author is supported by Ministerio de Ciencia e Innovaci\'on (Grants PID2019-103854GB-I00  and PID2022-137612NB-I00 funded by MCIN/AEI/10.13039/501100011033 and ``ERDF A way of making Europe"). He also acknowledges support by Generalitat Valenciana CIAICO/2021/163.  The second author  gratefully acknowledges support from the National Science Foundation, Award No. DMS-2100912,  and her former institution, Metropolitan State University of Denver, which holds the award and allows her to serve as PI}

\keywords{Principal block, character degree, Sylow subgroup, Hall subgroup}

\subjclass[2010]{Primary 20C15, 20C20, 20C33}

\date{\today}

\begin{abstract}
The celebrated It\^o--Michler theorem asserts that a prime $p$ does not divide the degree of any irreducible character of a finite group $G$ if and only if $G$ has a normal and abelian Sylow $p$-subgroup. The principal block case of the recently-proven Brauer's height zero conjecture  isolates the abelian part in the It\^o--Michler theorem. In this paper, we show that the normal part can also be isolated in a similar way. This is a consequence of work on a strong form of the so-called Brauer's height zero conjecture for two primes of Malle and Navarro. Using our techniques, we also provide an alternate proof of this conjecture.
\end{abstract}

\maketitle



\section{Introduction}

The It\^o--Michler theorem \cite{mich86} was one of the first results obtained as a consequence of the announcement of the classification of finite simple groups. This fundamental result in representation theory of finite groups asserts that a prime $p$ does not divide the degree of any complex irreducible character of a finite group $G$ if and only if $G$ has a normal abelian Sylow $p$-subgroup. 
This theorem was motivated by Brauer's height zero conjecture, which was proposed in 1955, long-before the classification was completed, and was recently proved in \cite{mnst}. Brauer's height zero conjecture can be considered as  a block version of the It\^o--Michler theorem, asserting that if $B$ is a Brauer $p$-block of $G$, then every irreducible character in $B$ has height zero  if and only if the defect group of the block is abelian. In particular, the principal block case proven in \cite{mn2} says that a Sylow $p$-subgroup is abelian if and only if $p$ does not divide the degree of any irreducible character in the principal $p$-block $B_p(G)$ of $G$.

Noting that Brauer's height zero conjecture isolates the ``abelian" part of the It\^o--Michler theorem, isolating the ``normal" part is another interesting problem (see, e.g. \cite{nav16}) that has also received attention.  As pointed out by Navarro in \cite[p. 1365]{nav16} ``when we restrict our attention to the characters in the principal block then the commutativity of $P$ is characterized (by the principal block case of Brauer's height zero conjecture) while the normality of $P$ is lost". Our first main result shows  that, perhaps surprisingly, the normality of $P$ is also characterized when we restrict our attention to the characters in the principal block {\it for the primes different from $p$}.

\begin{thml}\label{thm:normalsylow}
Let $G$ be a finite group, let $p$ be a prime and let $P\in\Syl_p(G)$.
Then $P\trianglelefteq G$ if and only if for every prime $q\neq p$ that divides $|G|$  and every $\chi\in\Irr(B_q(G))$, $p$ does not divide $\chi(1)$.
\end{thml}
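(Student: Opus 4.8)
The plan is to treat the two implications separately; the forward direction is short, and essentially all the work lies in the reverse direction.

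For the forward implication, suppose $P\trianglelefteq G$, so that $P=\oh{p}{G}$, and fix a prime $q\neq p$ dividing $|G|$. Since $P$ is a normal $p$-subgroup and $q\neq p$, we have $P\sbs\oh{q'}{G}$. Now $B_q(G)$ covers the principal $q$-block of the normal subgroup $\oh{q'}{G}$, and the latter, being a block of a $q'$-group, consists of the trivial character alone. Hence every $\chi\in\Irr(B_q(G))$ lies over $1_{\oh{q'}{G}}$, i.e. $\oh{q'}{G}\sbs\Ker\chi$, and in particular $P\sbs\Ker\chi$. Thus $\chi$ is inflated from $\Irr(G/P)$, and since $|G/P|=|G|_{p'}$ is coprime to $p$, we conclude $p\nmid\chi(1)$, as desired.

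For the reverse implication I would argue by contradiction, choosing $G$ of minimal order subject to satisfying the hypothesis while $P$ is not normal. The first observation is that the hypothesis passes to quotients: if $1\neq N\trianglelefteq G$, then every character in $\Irr(B_q(G/N))$ inflates to a character in $\Irr(B_q(G))$ of the same degree, so $G/N$ again satisfies the hypothesis and, being smaller, has a normal Sylow $p$-subgroup; that is, $PN\trianglelefteq G$ for every nontrivial $N\trianglelefteq G$. From this I would extract two structural constraints. First, no minimal normal subgroup is a $p$-group, for such an $N$ would satisfy $N\sbs\oh{p}{G}\sbs P$ and force $P=PN\trianglelefteq G$. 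Second, $G$ must be monolithic: given distinct minimal normal subgroups $N_1,N_2$, the modular law gives $PN_1\cap PN_2=P(N_1\cap PN_2)$, and since $N_i\cap PN_j\trianglelefteq G$ is contained in the minimal subgroup $N_i$, one is forced into $PN_1=PN_2=:M$; but then $N_2\cong N_2N_1/N_1$ embeds into the $p$-group $M/N_1$, making $N_2$ a $p$-group, contrary to the first constraint. Hence $G$ has a unique minimal normal subgroup $N=S_1\times\cdots\times S_k$ with $S_i$ copies of a simple group $S$, where $S$ is either cyclic of order $r\neq p$ or nonabelian.

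The remaining analysis uses $\cent{G}{N}$ together with $G/\cent{G}{N}\hookrightarrow\Aut(N)$ to reduce to the almost simple situation, and this is where I expect the real difficulty to lie. When $S$ is abelian the group is $p$-solvable of a restricted shape, and one should be able to produce a character of $p$-divisible degree in some principal $q$-block by fairly direct means---as already happens for $G=\sym_3$ with $p=2$, $q=3$, where the degree-$2$ character lies in $B_3(G)$. The genuinely hard case is $S$ nonabelian with $p\mid|S|$: here one must show, using the classification of finite simple groups, that for the relevant almost simple group there is a prime $q\neq p$ and a character $\chi\in\Irr(B_q(G))$ with $p\mid\chi(1)$. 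This is precisely the content of the strong two-prime form of Brauer's height zero conjecture of Malle and Navarro, and verifying it for each family---controlling exactly which $p'$-degrees occur in the principal $q$-blocks of groups of Lie type, the alternating groups, and the sporadic groups---is the main obstacle of the proof.
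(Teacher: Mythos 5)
Your forward implication is correct and is essentially the paper's own argument, and your reductions for the converse are sound as far as they go: the hypothesis does pass to quotients, no minimal normal subgroup of a minimal counterexample can be a $p$-group, $G$ must be monolithic, and the abelian-socle case can indeed be closed directly along the lines you indicate (if the socle $N$ is an $r$-group with $r\neq p$, then $PN\trianglelefteq G$ and a non-normal $P$ cannot centralize $N$, so by coprime action some $\lambda\in\Irr(N)$ has $PN$-orbit of length divisible by $p$; since $N$ has a unique $r$-block, \cite[Thm.~9.4]{nav} yields $\chi\in\Irr(B_r(PN))$ with $p\mid\chi(1)$, contradicting Lemma \ref{step1} applied to $PN\trianglelefteq G$).

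The genuine gap is the nonabelian-socle case, which you do not prove and, more importantly, misidentify. What your induction needs there is: for every almost simple group $A$ with socle $S$ and every prime $p$ dividing $|S|$, there exist \emph{some} prime $q\neq p$ and some $\chi\in\Irr(B_q(A))$ with $p\mid\chi(1)$ (plus a wreath-product argument to descend from socle $S^k$ with $k>1$ to the almost simple case, which the paper carries out using \cite[Lem.~3.2]{dmn} and Brandt's bound \cite{bra}, and which you also omit). This is \emph{not} ``the content of the strong two-prime form of Brauer's height zero conjecture'': that statement (Theorem \ref{thm:2pBHZ}, proved in \cite{lwwz}) is a biconditional whose ``if'' direction assumes degree conditions on \emph{both} principal blocks --- $p\nmid\chi(1)$ on $\Irr(B_q(G))$ \emph{and} $q\nmid\chi(1)$ on $\Irr(B_p(G))$ --- whereas your hypothesis supplies only the first, so citing it as a black box cannot close your argument. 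Worse, the fixed-pair one-sided statement you would be tempted to use is actually \emph{false}: by Theorem \ref{sporadic}, for example $B_5(M_{11})$ contains no character of degree divisible by $3$, yet $M_{11}$ has no normal Sylow $3$-subgroup; only the quantifier ``for some $q\neq p$'' rescues the claim (for $M_{11}$ and $p=3$ one may take $q=2$ or $q=11$). Supplying this existential statement is exactly the CFSG-based content of the paper's Theorem \ref{thm:pnilphall} together with Theorems \ref{sporadic}, \ref{alt}, \ref{lie} and \ref{as}, including the separate treatment of the five exceptional sporadic triples via composition factors. The paper then deduces Theorem A in a few lines: applying Theorem \ref{thm:pnilphall} to every pair shows that $\bN_G(P)$ contains a Sylow $q$-subgroup for each $q\neq p$, whence $\bN_G(P)=G$. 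Your minimal-counterexample scheme is a legitimate alternative skeleton, but it would in effect have to reprove that theorem; no appeal to the two-prime conjecture can substitute for it.
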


This is therefore a dual version of Brauer's height zero conjecture for principal blocks.   We remark that Malle and Navarro found a different subset of $\Irr(G)$ that characterizes the normality of a Sylow $p$-subgroup. In \cite{mn12} they proved that $P\trianglelefteq G$ if and only if $p$ does not divide $\chi(1)$ for every irreducible constituent $\chi$ of the permutation character $(1_P)^G$. (See also \cite{GLLV22} for a further refinement of the statement.) Note, however, that while the union of $\Irr(B_q(G))$ for $q\neq p$ prime divisor of $|G|$  can be easily determined from the character table of $G$, it is still not known whether $(1_P)^G$ is determined by the character table of $G$.
(However, see \cite{n98} or  \cite[Thm.~1.2]{nr}, where this is proved for $p$-solvable groups.)

Using Brauer's height zero conjecture for principal blocks \cite{mn2} and Theorem  \ref{thm:normalsylow}, we deduce the following refinement of the It\^o--Michler theorem.

\begin{corl} \label{cor:blockitomichalph}
Let $G$ be a finite group and let $p$ be a prime. Then $G$ has a normal and abelian Sylow $p$-subgroup if and only if $p$ does not divide $\chi(1)$ for every $\chi$ that belongs to some principal block for some prime divisor of $|G|$.
\end{corl}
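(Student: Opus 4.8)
The plan is to recognize that the single divisibility condition in the statement decomposes along the prime $q$ into exactly the two pieces characterizing, respectively, the normality and the commutativity of a Sylow $p$-subgroup. Writing $(\ast)$ for the condition that $p\nmid\chi(1)$ for every prime $q$ dividing $|G|$ and every $\chi\in\Irr(B_q(G))$, I would first dispose of the degenerate case $p\nmid|G|$: here $P=1$ is automatically normal and abelian, while every character degree divides $|G|$ and is therefore coprime to $p$, so $(\ast)$ holds as well. Both sides are true and there is nothing further to check.

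So assume $p\mid|G|$ and split $(\ast)$ according to whether the prime $q$ equals $p$. Let $(\ast)_p$ be the assertion that $p\nmid\chi(1)$ for all $\chi\in\Irr(B_p(G))$, and let $(\ast)'$ be the assertion that $p\nmid\chi(1)$ for every prime $q\neq p$ dividing $|G|$ and every $\chi\in\Irr(B_q(G))$. Since $p$ is itself a divisor of $|G|$ in this case, condition $(\ast)$ is equivalent to the conjunction of $(\ast)_p$ and $(\ast)'$.

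The two halves are now each an already-established equivalence. By the principal block case of Brauer's height zero conjecture \cite{mn2}, condition $(\ast)_p$ holds if and only if $P$ is abelian. By Theorem \ref{thm:normalsylow}, condition $(\ast)'$ holds if and only if $P\trianglelefteq G$. Combining the two, $(\ast)$ holds if and only if $P$ is both normal in $G$ and abelian, which is precisely the statement that $G$ has a normal abelian Sylow $p$-subgroup.

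I do not expect a genuine obstacle here: essentially all of the mathematical content is carried by the two inputs, namely Theorem \ref{thm:normalsylow} for the primes $q\neq p$ and the principal-block height zero theorem for $q=p$, and the proof merely has to observe that these two results address disjoint and complementary parts of the single hypothesis $(\ast)$ and that their conclusions combine conjunctively. The one point requiring a moment's care is the bookkeeping of which primes $q$ occur: one must note that $p$ appears among the divisors of $|G|$ exactly when the Sylow $p$-subgroup is nontrivial, so that in the main case the $q=p$ term is genuinely present and supplies the commutativity constraint, while in the excluded case $p\nmid|G|$ the commutativity and normality conditions both hold vacuously.
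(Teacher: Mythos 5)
Your proof is correct and follows exactly the paper's (one-line) argument: decompose the hypothesis into the $q=p$ part, handled by the principal block case of Brauer's height zero conjecture \cite{mn2}, and the $q\neq p$ part, handled by Theorem \ref{thm:normalsylow}. Your treatment of the degenerate case $p\nmid|G|$ and the explicit bookkeeping of which primes occur are just careful elaborations of what the paper leaves implicit.
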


This project was motivated by a conjecture of Malle and Navarro  called Brauer's height zero conjecture for two primes: if $G$ is a finite group and $p$ and $q$ are different prime divisors of $|G|$,  then $G$ has a nilpotent Hall $\{p,q\}$-subgroup if and only if  $p$ does not divide the degree of the characters in the principal $q$-block and $q$ does not divide the degrees of the characters in the principal $p$-block. (Actually, they allowed the case $p=q$ in their statement, which is exactly Brauer's height zero conjecture for principal blocks, proved in \cite{mn2}.) The ``only if" direction of this conjecture was proved in \cite{mn}, while the ``if" direction has recently been  proven in \cite{lwwz}.
This is one of the recent results in modular representation theory that take two primes into account (see, for instance, \cite{nrs, mn, lwwz}).

In \cite{mn}, Malle and Navarro pointed out that it is an interesting question to characterize when the irreducible characters of the principal $p$-block $B_p(G)$ have degree not divisible by $q$. Our original goal was to address this question. We obtain a block version of the It\^o--Michler theorem.

\begin{thml}\label{thm:pnilphall}
Let $G$ be a finite group and let $p$ and $q$ be two different primes. 
Assume that $S$ is not a composition factor of $G$ if $(S,p,q)$ is one of the $5$ exceptions listed in Theorem \ref{sporadic} below.
If $q$ does not divide the degree of any irreducible character in $B_p(G)$, then  a Sylow $p$-subgroup of $G$ normalizes a Sylow $q$-subgroup of $G$.
\end{thml}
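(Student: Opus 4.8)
The plan is to argue by induction on $|G|$. The key structural input is that inflation from a quotient carries $\Irr(B_p(G/N))$ into $\Irr(B_p(G))$ for every $N\trianglelefteq G$ while preserving degrees; hence the hypothesis that $q\nmid\chi(1)$ for all $\chi\in\Irr(B_p(G))$ is inherited by every quotient $G/N$, and since the composition factors of $G/N$ lie among those of $G$, the exclusion of the five bad triples persists. Dually, I would show that the conclusion lifts through normal $p'$- and $p$-subgroups. If $K=\oh{p'}{G}\neq 1$, then by induction a Sylow $p$-subgroup of $G/K$ normalizes a Sylow $q$-subgroup; pulling this back to the preimage $R$ of that Sylow $q$-subgroup, one notes that $P$ normalizes $R$ and acts coprimely on it (as $R$ is a $p'$-group), so a Glauberman-type argument yields a $P$-invariant Sylow $q$-subgroup of $R$, which is a Sylow $q$-subgroup of $G$. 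If $\oh{p}{G}\neq 1$, one lifts the conclusion from $G/\oh{p}{G}$ by a Frattini argument applied to the preimage of the relevant $q$-closed subgroup. After these reductions I may assume $\oh{p}{G}=\oh{p'}{G}=1$, so that $F(G)=1$ and $F^*(G)=E(G)=S_1\times\cdots\times S_k$ is a direct product of nonabelian simple groups with $\cent{G}{F^*(G)}=1$, whence $E(G)\trianglelefteq G\leq\Aut(E(G))$.

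Next I would push the hypothesis down to the layer $E:=F^*(G)$ and then to each factor $S_i$. Since the principal block $B_p(G)$ covers $B_p(E)$, every $\theta\in\Irr(B_p(E))$ lies under some $\chi\in\Irr(B_p(G))$, and Clifford theory gives $\theta(1)\mid\chi(1)$; hence $q\nmid\theta(1)$ for all $\theta\in\Irr(B_p(E))$. Because $E$ is a direct product, its principal $p$-block factors as the product of the $B_p(S_i)$ and its irreducible characters are the products $\theta_1\cdots\theta_k$ with $\theta_i\in\Irr(B_p(S_i))$; choosing the remaining constituents trivial isolates each factor and shows that $q\nmid\psi(1)$ for every $\psi\in\Irr(B_p(S_i))$. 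Thus each simple direct factor $S_i$ — being a composition factor of $G$, and so, by hypothesis, avoiding the five exceptional triples — satisfies exactly the hypothesis of the simple-group statement.

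At this point I would invoke the classification of the relevant simple groups, Theorem~\ref{sporadic}, whose proof via the classification of finite simple groups I regard as the main obstacle: it must determine, for each nonabelian simple $S$ and each pair $p\neq q$ dividing $|S|$, precisely when $B_p(S)$ consists of characters of $q'$-degree, and show that outside the five listed triples this forces the $p$- and $q$-local structure needed for a Sylow $p$-subgroup of $S$ to normalize a Sylow $q$-subgroup. This is where the genuine representation-theoretic work lies, using Deligne--Lusztig theory, $d$-Harish-Chandra series, and primitive prime divisor arguments for the groups of Lie type, together with direct checks for the alternating and sporadic groups. Granting Theorem~\ref{sporadic}, the remaining task is to assemble a global $P$-invariant Sylow $q$-subgroup of $G$ from the factor-wise information: one must control the action of the $p$-group $P$ on the set $\{S_1,\dots,S_k\}$ and the image of $G$ in $\Out(E)$, a subgroup of a product of the $\Out(S_i)$ extended by permutations of the isomorphic factors. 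The conclusion for each $P$-orbit of factors is then combined, using coprimeness of $P$ with the $q$-part together with standard counting of Sylow $q$-subgroups, to produce a single Sylow $q$-subgroup of $G$ normalized by $P$. I expect the permutation action on isomorphic factors and the contribution of outer automorphisms of order divisible by $q$ to require the most care in this final step.
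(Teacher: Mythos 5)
Your outline gets several things right (the hypothesis passes to quotients and, via block covering, to subnormal subgroups; the $\bO_{p'}(G)$ reduction via coprime action is sound; the factorization $B_p(S_1\times\cdots\times S_k)=B_p(S_1)\times\cdots\times B_p(S_k)$ is correct), but it contains one step that is simply false and defers precisely the part of the argument where the real reduction work lies.

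The false step is the reduction to $\bO_p(G)=1$. You claim that if $K=\bO_p(G)\neq 1$, then the conclusion for $G/K$ lifts to $G$ by a Frattini argument. That lifting is a purely group-theoretic assertion, and it fails: take $G=\SSS_3$, $p=3$, $q=2$. The quotient $G/\bO_3(G)\cong\CCC_2$ trivially satisfies the conclusion, yet no Sylow $3$-subgroup of $\SSS_3$ normalizes a Sylow $2$-subgroup. Indeed, if $P$ normalizes $QK$ with $Q\in\Syl_q(G)$, then since any Sylow $p$-subgroup of $P\cdot QK$ contains $K$, a Sylow $p$-subgroup of $P\cdot QK$ normalizes a Sylow $q$-subgroup $Q^x$ if and only if $[K,Q^x]\leq K\cap Q^x=1$; so the lift exists exactly when some conjugate of $Q$ centralizes $K$, which need not hold. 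Of course $\SSS_3$ violates the block hypothesis (its principal $3$-block contains the degree-$2$ character), but that is the point: the characters of $B_p(G)$ that do not inflate from $B_p(G/\bO_p(G))$ must be used, and your induction throws them away. The paper never factors out $\bO_p(G)$. Instead, the case $\bE(G)=1$ (which subsumes $\bF^*(G)=\bO_p(G)$) is handled by noting that $\bC_G(\bO_p(G))\leq\bO_p(G)$ forces $G$ to have a unique $p$-block, so the hypothesis applies to all of $\Irr(G)$ and the It\^o--Michler theorem finishes; and a nontrivial $p$-group $Z=\bZ(\bE(G))$ is carried through the entire proof, requiring Theorem \ref{quasi} (quasisimple groups with nontrivial $p$-center have a character of degree divisible by $q$ in $B_p$) and a minimal-normal-subgroup analysis in which the lift to $G$ works only after showing the relevant $p$-subgroup $L$ is \emph{central}, so that $Q$ is characteristic in $QL$. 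Consequently the clean structure you assume afterwards ($\bF(G)=1$, $G\leq\Aut(E)$ with $E$ a product of simple groups) is not available by your route.

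Second, the assembly you explicitly defer (``I expect the permutation action \dots to require the most care'') is the bulk of the paper's proof and does not follow from the simple-group statement plus ``coprimeness and Sylow counting''. Two further inputs are indispensable there. First, the hypothesis must be used to kill the permutation action on isomorphic factors: the paper's Step 3 combines \cite[Lem.~3.2]{dmn} with Brandt's bound $|\Irr(B_p(S))|\geq 3$ to manufacture a character of $B_p(G)$ of degree divisible by $q$ whenever $q$ divides the order of the induced permutation group, forcing the minimal normal subgroups of $G/Z$ to be simple. Second, one needs the theorem for \emph{almost simple} groups whose outer part is a $q$-group or a $p'$-group (Theorems \ref{lie} and \ref{as}), which is genuinely stronger than the simple case: a field automorphism of order $q$ need not be compatible with the configuration inside $S$, and proving that it is requires the semisimple-character and Sylow-torus arguments of Section 2, not formal Clifford theory. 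The final gluing in the paper's Step 6 (Lemma \ref{copr}, \cite[Lem.~4.29]{isa}, the reduction to $G/E$ a $q$-group and $C=1$, and Lemma \ref{sub}) is where these pieces are combined; none of it is routine. So your proposal both contains a step that would fail ($\bO_p$) and leaves unproved the reduction machinery that constitutes most of the paper's Section 5.
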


We remark that the conclusion of Theorem \ref{thm:pnilphall} is equivalent to $G$ having a $p$-nilpotent Hall $\{p,q\}$-subgroup. There does not seem to be much room for improvement in Theorem \ref{thm:pnilphall}. We will see in Theorem \ref{sporadic} below that the exceptions listed are necessary.  It is not clear how to strengthen the conclusion either. As $G=\PSL_2(11)$ for $p=2$ and $q=3$ shows, the hypothesis that $q$ does not divide the degrees of the characters in the principal $p$-block does not imply that $G$ has a nilpotent Hall $\{p,q\}$-subgroup. In fact, in this example $G$ has a $p$-nilpotent Hall $\{p,q\}$-subgroup isomorphic to $\DDD_{12}$ but it also has Hall $\{p,q\}$-subgroups isomorphic to $\AAA_4$ which are not $p$-nilpotent. This shows that the hypothesis does not imply that Hall subgroups are conjugate.

Further, the converse of Theorem \ref{thm:pnilphall} does not hold.
There are many simple groups with a Sylow $p$-subgroup that normalizes a Sylow $q$-subgroup  but such that $q$ divides the degree of some irreducible character in the principal $p$-block, for instance $\AAA_5$ for $p=3$ and $q=2$.
The $\{p,q\}$-separable case of Theorem \ref{thm:pnilphall} for arbitrary blocks is the main result of \cite{nw}. As pointed out in \cite{mn}, the group $6.\AAA_7$ shows that the conclusion of Theorem \ref{thm:pnilphall} does not hold for arbitrary non-principal $p$-blocks of full defect.

Theorem \ref{thm:normalsylow}, and hence Corollary \ref{cor:blockitomichalph}, are pleasant consequences of Theorem \ref{thm:pnilphall}. Using the techniques developed to prove Theorem \ref{thm:pnilphall}, we further provide a short proof of Brauer's height zero conjecture for two primes (see Theorem \ref{thm:2pBHZ} below). Therefore, our work does not rely on \cite{lwwz}.

In Section 2, we prove Theorem \ref{thm:pnilphall} for almost simple groups.  In Section 3, we  obtain some other  results that will be used in the proof of Theorem \ref{thm:pnilphall}. We complete our proof of Brauer's height zero conjecture for two primes in Section 4  and the proof of Theorems \ref{thm:normalsylow} and \ref{thm:pnilphall} in Section 5. We also obtain the principal block case of a theorem that Navarro \cite{nav04} proved in 2004 assuming the Alperin--McKay conjecture; see Theorem \ref{main}. We conclude in Section 6 with a version of Brauer's height zero conjecture for any number of primes and conjecturing a Galois version and a height-zero version of Theorem \ref{thm:normalsylow}, which we prove for solvable groups.

\section{Almost simple groups}

Throughout, we continue to let $B_p(G)$ denote the principal $p$-block of a group $G$. 
If $N$ is a normal subgroup of a group $G$ and $\theta$ is an irreducible character of $N$, we write $\Irr(G|\theta)$ to denote the set of irreducible constituents of the induced character $\theta^G$.
Our notation is standard and follows \cite{isa, isa2, nav}. 
Here we begin our discussion on almost simple groups.

First, we consider sporadic and alternating groups.

\begin{thm} \label{sporadic}

Let $G$ be an almost simple group with socle a sporadic simple group $S$ or the Tits group $\tw{2}\type{F}_4(2)'$ and let $p$ and $q$ be two different primes dividing $|G|$. Then the principal $p$-block of $G$ has an irreducible character of degree divisible by $q$ unless $(S, p, q)$ is one of the following: $(S,p,q)=(J_1,2,3)$, $(J_1,2,5)$, $ (J_1,3,5)$, $ (J_1,5,3)$, $ (J_4,5,7)$, $ (J_4,7,5)$, $ (J_4, 7,11)$, $ (M_{11},5,3),$ or $(M_{22},7,2)$. 
Among these exceptional cases, $G$ does not have  a $p$-nilpotent Hall $\{p,q\}$-subgroup if and only if  $(S,p,q)$ is in the following list:
\begin{itemize}
\item $(J_1,2,3)$,
\item $(J_1,2,5)$,
\item $(J_4,7,11)$,
\item $(M_{11},5,3)$, or
\item $(M_{22}, 7,2)$.
\end{itemize}
\end{thm}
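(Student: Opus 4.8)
The statement is a finite verification, so the plan is to organize the computation rather than to find a conceptual argument. First I would observe that the list of sporadic simple groups together with the Tits group is finite, and that $\Out(S)\le C_2$ for every such $S$; hence the only almost simple groups $G$ with such a socle are $S$ itself and $\Aut(S)=S.2$ (when $\Out(S)\neq 1$). Since only finitely many primes divide each $|G|$, the whole assertion ranges over finitely many triples $(G,p,q)$, and I would treat it as an explicit check using the ordinary character tables in the \textsf{GAP} character table library together with the block distributions recorded there.

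For the first assertion I would, for each pair $(G,p)$, read off $\Irr(B_p(G))$ from the partition of $\Irr(G)$ into $p$-blocks, and then, for each prime $q\neq p$ dividing $|G|$, test whether $q\mid\chi(1)$ for some $\chi\in\Irr(B_p(G))$. Recording the triples for which this fails---i.e.\ for which every character in the principal $p$-block has degree prime to $q$---should produce exactly the nine listed triples $(S,p,q)$. Because for the larger sporadic groups and for most prime pairs the principal block is large and contains characters whose degrees realize the full $q$-part of $|G|$, non-divisibility is rare; the short length of the list reflects this, but the honest justification is the table computation. One point that needs care here is consistency between $G=S$ and $G=S.2$: I would verify that passing to the automorphic extension does not change the answer, so that phrasing the exceptional list in terms of the socle $S$ alone is well-defined.

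For the second assertion I would use the equivalence, noted in the paper, that $G$ has a $p$-nilpotent Hall $\{p,q\}$-subgroup if and only if some Sylow $p$-subgroup of $G$ normalizes some Sylow $q$-subgroup; concretely, fixing $Q\in\Syl_q(G)$, this holds if and only if $|N_G(Q)|_p=|G|_p$, since in that case $N_G(Q)$ contains a full Sylow $p$-subgroup normalizing $Q$, and the converse follows from the conjugacy of Sylow $q$-subgroups. Thus for each of the nine exceptional triples I would compute the $p$-part of $|N_G(Q)|$ from the local data in the Atlas of Finite Groups (Sylow normalizers and maximal subgroups). For the small socles $J_1$, $M_{11}$, $M_{22}$ the relevant Sylow subgroups have very small order and the normalizers are explicit; for $J_4$ the group is large but the primes $5,7,11$ are small, so the Sylow $q$-subgroups are again small and one only needs their normalizer orders. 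Collecting the triples where $|N_G(Q)|_p<|G|_p$ should yield exactly the five listed cases.

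The main obstacle is not the character-degree computation, which is mechanical once the block distribution is available, but the local analysis underlying the second assertion. One must be careful that a $p$-nilpotent Hall $\{p,q\}$-subgroup, when it exists, need not be unique up to conjugacy (as the later remark on $\PSL_2(11)$ illustrates), so the cleanest route is to decide the question uniformly through the Sylow normalizer criterion $|N_G(Q)|_p=|G|_p$ rather than by inspecting individual Hall subgroups. Reading the normalizer orders correctly from the Atlas, particularly inside $J_4$ where the ambient group is enormous, is where the real attention is required.
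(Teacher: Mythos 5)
Your proposal is correct and matches the paper's approach: the paper likewise disposes of both assertions by finite computation, citing GAP for the block-degree check and GAP together with Wilson's sporadic-group data and Vdovin--Revin's results on Hall subgroups (e.g.\ their Theorem~8.2 for the non-existence of a Hall $\{7,2\}$-subgroup in $M_{22}$, hence in $M_{22}.2$) for the second part. Your Sylow-normalizer criterion $|\bN_G(Q)|_p=|G|_p$ is a valid and slightly more self-contained way to organize that second check, but it amounts to the same table verification.
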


\begin{proof}
The first part can be checked with GAP \cite{gap}.
The second part can be checked with GAP, \cite{wil}, and \cite{vr}. Note that by  \cite[Thm.~8.2]{vr}, $M_{22}$ does not contain a Hall $\{7,2\}$-subgroup, and hence neither does $M_{22}.2$.
\end{proof}

\begin{thm}
\label{alt}
Let $G$ be an almost simple group with socle an alternating group and let $p$ and $q$ be two different primes dividing $|G|$. Then the principal $p$-block contains an irreducible character of degree divisible by $q$.
\end{thm}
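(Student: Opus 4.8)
The plan is to reduce to the symmetric and alternating groups themselves and then to an existence statement about partitions. Since an almost simple group with socle $\alt_n$ is $\alt_n$ or $\sym_n$ except when $n=6$, I would first dispose of the finitely many small cases: the groups with $n \le 7$ or so, and the exceptional almost simple groups with socle $\alt_6$ (namely $\sym_6$, $\PGL_2(9)$, $M_{10}$ and $\mathrm{P}\Gamma\mathrm{L}_2(9)$), can all be checked directly in GAP. For the generic range $n$ large I would first record that any two primes $p\neq q$ dividing the order satisfy $p,q\le n$, and that by Nakayama's conjecture the principal $p$-block $B_p(\sym_n)$ consists exactly of the characters $\chi^\lambda$ whose $p$-core equals the $p$-core of $(n)$, which is the single row $(r)$ with $r\equiv n \pmod p$; in particular the block has weight $w=\lfloor n/p\rfloor\ge 1$.

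The heart of the matter is the following claim for the symmetric group: there is a partition $\lambda\vdash n$ with $p$-core $(r)$ such that $q\mid f^\lambda$, where $f^\lambda=\chi^\lambda(1)$. Granting this, I would pass to $\alt_n$ as follows. Since the principal block $B_p(\sym_n)$ covers the principal block $B_p(\alt_n)$, every constituent of $\chi^\lambda\!\downarrow_{\alt_n}$ lies in $B_p(\alt_n)$. If $\lambda$ is not self-conjugate then $\chi^\lambda\!\downarrow_{\alt_n}$ is irreducible of degree $f^\lambda$, which is divisible by $q$; and even if $\lambda=\lambda'$ the restriction splits into two characters of degree $f^\lambda/2$, which is still divisible by $q$ whenever $q$ is odd. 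The only nuisance is $q=2$ together with a self-conjugate $\lambda$, so in carrying out the construction below I would always arrange $\lambda\neq\lambda'$ (there is ample freedom to do so), making the reduction from $\sym_n$ to $\alt_n$ automatic.

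For the combinatorial claim itself I would build $\lambda$ from the $p$-core $(r)$ by adding $p$-hooks (equivalently, prescribe the $p$-quotient, a $p$-tuple of partitions of total size $w$), and control the $q$-adic valuation of $f^\lambda$ through the hook-length formula $v_q(f^\lambda)=v_q(n!)-\sum_{(i,j)}v_q(h_\lambda(i,j))$, aiming to keep $\sum_{(i,j)}v_q(h_\lambda(i,j))\le v_q(n!)-1$. The cleanest route is a case split on the weight: when $w\ge q$ one can place the weight so as to realize, inside the block, a character whose degree inherits a factor of $q$ from a partition of $w$ of non-$q'$-degree (such a partition of $w$ exists once $w$ is large enough, e.g. $(w-2,2)$ contributes a factor of $q$ when $w\equiv 0$ and $q\ge 5$), while keeping the $p$-core fixed; when $w$ is small relative to $q$ one instead forces a single box of $\lambda$ to have hook length divisible by $q$ in a position that does not cancel against the numerator. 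I would organize this by writing $n$ in base $q$ and invoking Kummer's theorem to locate a carry, then translating that carry into a hook of $q$-divisible length that is compatible with the constraint of having $p$-core $(r)$.

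The main obstacle is precisely this last compatibility: making the forced $q$-divisibility coexist with the fixed $p$-core $(r)=(n\bmod p)$, since the two conditions are governed by the incompatible arithmetics of $p$ and $q$. I expect the argument to require a finer case analysis (according to the residues of $n$ modulo $p$ and modulo $q$, and according to whether $w<q$), with the genuinely small configurations of $n$, $p$ and $q$ delegated to the GAP computation mentioned above; the self-conjugacy issue for $q=2$ is a secondary point that is handled by the freedom in choosing $\lambda$.
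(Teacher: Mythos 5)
Your reduction steps are sound: the identification of the almost simple groups with socle $\alt_n$, the description of $\Irr(B_p(\sym_n))$ via Nakayama's conjecture as the characters $\chi^\lambda$ whose $p$-core is $(n \bmod p)$, and the passage from $\sym_n$ to $\alt_n$ by restricting a non-self-conjugate $\chi^\lambda$ (using that $B_p(\sym_n)$ covers only $B_p(\alt_n)$) are all correct. But there is a genuine gap at exactly the point you flag yourself: the central combinatorial claim --- that for every $n$ and all primes $p\neq q$ dividing $|\sym_n|$ there exists $\lambda\vdash n$ with $p$-core $(n\bmod p)$, $\lambda\neq\lambda'$, and $q\mid\chi^\lambda(1)$ --- is never established. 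What you give is a strategy (hook-length valuations, Kummer's theorem, a case split on the weight $w=\lfloor n/p\rfloor$ relative to $q$) together with the admission that ``the main obstacle is precisely this last compatibility'' between forcing $q$-divisibility of the degree and fixing the $p$-core, and that you ``expect the argument to require a finer case analysis.'' That compatibility problem is not a loose end to be tidied; it \emph{is} the theorem. Moreover, your fallback of delegating the ``genuinely small configurations of $n$, $p$ and $q$'' to GAP is not available as stated: the delicate regime $w<q$ (i.e.\ $q>n/p$) contains triples $(n,p,q)$ with $n$ unbounded, so unless you prove that the unresolved configurations are confined to finitely many values of $n$, no finite computation can close them.

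For comparison, the paper does not attempt this combinatorics at all: it checks $n=6$ in GAP and otherwise quotes two dedicated papers, \cite[Thm.~C]{gmv} for the case $q<p$ and \cite[Thm.~A]{gm} for the case $q>p$. The fact that these two regimes are the subject of separate full-length articles is a good indication that the existence statement you left open is the genuinely hard part, not a routine case analysis; note in particular that your ``secondary'' issue of $q=2$ with self-conjugate partitions is essentially the theme of \cite{gmv}. A self-contained proof along your lines would in effect reprove those results; the honest completion of your argument is to cite them at the point where your sketch currently says ``I expect.''
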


\begin{proof}
Let $S=\AAA_n$. If $n=6$ this can be checked with GAP. If $n\neq 6$ this was proved in \cite[Thm. C]{gmv} when $q<p$ and in  \cite[Thm. A]{gm} when $q>p$.
\end{proof}

We now turn our attention to groups $S$ of Lie type. First, we consider the case that $r_0\in\{p,q\}$, where $r_0$ is the defining characteristic for $S$.

\begin{lem}
\label{lem:liedefining}
Let $p$ and $q$ be two different primes.
Let $S$ be a simple group of Lie type such that $p$ and $q$ divide $|S|$ and $S$ is defined in characteristic $p$ or $q$. Then there exists $\chi\in\irr{B_p(S)}$ of degree divisible by $q$.
\end{lem}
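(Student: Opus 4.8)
The plan is to split according to which of $p,q$ is the defining characteristic $\ell$ of $S$; write $q_0=\ell^f$ for the size of the underlying field, so that $|S|=q_0^{N}\prod_{d}\Phi_d(q_0)^{a_d}$ up to the central correction needed to pass between $S$ and a simply connected cover $\mathbf{G}^{F}$. I would first record the two structural facts that drive everything. (a) In defining characteristic $\ell$ every irreducible character of $\mathbf{G}^{F}$ of positive defect lies in the principal block, the remaining ones being exactly the characters of degree divisible by $|S|_\ell$ (the defect-zero characters); this is a classical result of Humphreys. (b) A nontrivial unipotent character has degree divisible by $\ell$, since its generic degree is $q_0^{a}$ times a product of cyclotomic factors with $a>0$, while the trivial character is the only unipotent character of $\ell'$-degree.

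Suppose first $\ell=p$, so that $q\neq p$ is a non-defining prime dividing $|S|$, and choose $e$ with $q\mid\Phi_e(q_0)$. Here I would use the semisimple characters coming from Deligne--Lusztig theory: for a semisimple element $s$ of the dual group $\mathbf{G}^{\ast F}$ there is an irreducible character $\chi_s$ of degree $[\mathbf{G}^{\ast F}:\mathbf{C}_{\mathbf{G}^{\ast}}(s)^{F}]_{p'}$. Since $\chi_s(1)$ is prime to $p$ it has full, hence positive, defect, so by (a) it lies in the principal block $B_p(S)$ after descending to $S$. It therefore suffices to produce $s$ with $q\mid[\mathbf{G}^{\ast F}:\mathbf{C}_{\mathbf{G}^{\ast}}(s)^{F}]$; generically one takes $s$ regular inside a maximal torus $T$ whose order polynomial is not divisible by $\Phi_e$, so that $|T^{F}|_q<|\mathbf{G}^{\ast F}|_q$ and hence $q\mid\chi_s(1)$.

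Suppose instead $\ell=q$, so now $q$ is the defining prime and we want a character in $B_p(S)$ of degree divisible by $q$, with $p$ non-defining. By (b) it is enough to find a nontrivial unipotent character lying in $B_p(S)$. Let $e=e_p$ be the multiplicative order of $q_0$ modulo $p$; by the work of Cabanes--Enguehard on $p$-blocks in non-defining characteristic, the unipotent characters of $B_p(S)$ are precisely those in the principal $e$-Harish-Chandra series, parametrized by the irreducible characters of the relative Weyl group $W_{\mathbf{G}}(\mathbf{L}_0)$ of the minimal $e$-split Levi $\mathbf{L}_0$. Since $p\mid|S|$ forces $\Phi_e\mid|S|$, the subgroup $\mathbf{L}_0$ is a proper $e$-split Levi and $W_{\mathbf{G}}(\mathbf{L}_0)$ is a nontrivial complex reflection group; hence this series contains a character other than the trivial one, and that character is the desired nontrivial unipotent character of $B_p(S)$.

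The main obstacle is uniformity at small and bad primes. In the first case the existence of a maximal torus with $q\mid[\mathbf{G}^{\ast F}:T^{F}]$ must be checked when $q=2$, where every maximal torus can have even order, as already in $\PSL_2(q_0)$; there one argues directly that some index $[\mathbf{G}^{\ast F}:T^{F}]$ is nonetheless even. In the second case the clean Cabanes--Enguehard description of the unipotent part of the principal block requires hypotheses on $p$ (for example $p$ good and $p\nmid|Z(\mathbf{G}^{sc})^{F}|$), so the bad primes $p\in\{2,3\}$, the primes dividing the order of the fundamental group, and the passage between $\mathbf{G}^{F}$ and the simple quotient $S$ need separate treatment, likely by invoking the general block-theoretic results together with an explicit check of the finitely many genuinely small groups. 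I expect this bookkeeping, rather than the main line of argument, to be where the real effort lies.
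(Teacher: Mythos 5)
Your proposal has a genuine gap in the case where $q$ is the defining characteristic, and it is located exactly at your fact (b). It is \emph{not} true that every nontrivial unipotent character has degree divisible by the defining prime $\ell$: the generic degree of a unipotent character is not $q_0^{a}$ times a product of cyclotomic factors, but $\frac{1}{n_\chi}q_0^{a}\prod_d\Phi_d(q_0)^{a_d}$ for an integer $n_\chi>1$ in general (a product of bad primes), and when $q_0\in\{2,3\}$ this denominator can cancel the defining-prime part entirely. For instance $\Sp_4(2)$ has a unipotent character of degree $\tfrac12 q_0(q_0^2+1)=5$. The precise statement is \cite[Thm.~6.8]{malle07}, which the paper invokes: the conclusion you want holds \emph{unless} $S$ is one of $\type{B}_n(2)\cong\type{C}_n(2)$ with $n\geq 3$, $\type{B}_2(2)'$, $\type{G}_2(2)'$, $\type{F}_4(2)$, or $\type{G}_2(3)$. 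Crucially, this list contains the infinite family $\type{B}_n(2)$, so your fallback of ``an explicit check of the finitely many genuinely small groups'' cannot close the gap. The paper deals with $\type{B}_n(2)$, $n\geq 4$, by a counting argument your proposal has no substitute for: by \cite[Thm.~6.8]{malle07} there are only five unipotent characters of odd degree, while $B_p(S)$ contains $k(2e,\lfloor n/e\rfloor)\geq 6$ unipotent characters ($e$ the order of $4$ modulo $p$), so some unipotent character in $B_p(S)$ has even degree. Your ``main obstacle'' paragraph worries about bad primes $p$ in the Cabanes--Enguehard description, but misses that the real failure is in (b), for small \emph{fields}, independently of $p$.

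Two secondary remarks. First, your fact (a) as stated for $\mathbf{G}^F$ simply connected is inaccurate when $Z(\mathbf{G}^F)\neq 1$: positive-defect characters are distributed among blocks according to their central characters, and only those trivial on the center land in the principal block; this matters for your semisimple-character construction, where you also need $s$ chosen so that $\chi_s$ descends to $S$. Second, in the case where $p$ is defining, your route via regular semisimple elements in well-chosen tori is far heavier than necessary and imports avoidable issues (existence of regular elements in small tori, component groups of centralizers): once you know $B_p(S)=\Irr(S)\setminus\{\mathrm{St}_S\}$, the paper simply applies the It\^o--Michler theorem to the simple group $S$ — since $q$ divides $|S|$ and $S$ is nonabelian simple, some $\chi\in\Irr(S)$ has degree divisible by $q$, and $\chi\neq\mathrm{St}_S$ because $\mathrm{St}_S$ has $p$-power degree — which finishes that case in two lines.
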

\begin{proof}
First, assume that $S$ is defined in characteristic $p$.  Then $B_p(S)=\Irr(S)\setminus\{\mathrm{St}_S\}$, where $\mathrm{St}_S$ is the Steinberg character, by \cite[Thm. 6.18]{CE04}.  Since $S$ is simple, the It\^o--Michler theorem \cite[Thm. 2.3]{mich86} tells us that either $q\nmid |S|$, or $q\mid\chi(1)$ for some $\chi\in\irr{S}$ (and hence some $\chi\in\irr{B_p(S)}$, since $\mathrm{St}_S$ has degree a power of $p$).

Next, suppose that $S$ is defined in characteristic $q$. Then by \cite[Thm. 6.8]{malle07}, either every nontrivial unipotent character of $S$ has degree divisible by $q$, or $q\leq 3$  and $S$ is $\type{B}_n(2)\cong \type{C}_n(2)$ with $n\geq 3$, $\type{B}_2(2)'$, $\type{G}_2(2)'$, $\type{F}_4(2)$, or $\type{G}_2(3)$. Since $B_p(S)$ necessarily contains a nontrivial unipotent character (see, e.g. \cite[Lem.~3.15]{mnst}), we may assume that $S$ is one of these exceptions. If $S=\type{B}_2(2)'$, $\type{B}_3(2)$, $\type{G}_2(2)'$, $\type{F}_4(2)$, or $\type{G}_2(3)$, we may check in GAP that $B_p(S)$ contains a character of degree divisible by $2$, respectively $3$, for every other prime $p$ dividing $|S|$. So, assume $S=\type{B}_n(2)$ with $n\geq 4$. Here \cite[Thm. 6.8]{malle07} yields that there are only five unipotent characters of odd degree, but $B_p(S)$ contains $k(2e, \lfloor \frac{n}{e}\rfloor)\geq 6$ unipotent characters (see, e.g., the discussion before \cite[Prop. 5.4]{malle17}), where $e$ is the order of $4$ modulo $p$ and $k(2e, w)$ is the number of irreducible characters of the imprimitive complex reflection group $G(2e, 1, w)$.  Hence we see $B_p(S)$ must contain a unipotent character of degree divisible by $2$, as desired.
\end{proof}

For the remainder of this section, the primary case is then  that neither $p$ nor $q$ are the defining characteristic for $S$. We will rely on and build upon the work of \cite{mn} in this situation.

For an integer $m$ relatively prime to  a prime $p$, we will let $d_p(m)$ denote the order of $m$ modulo $p$ if $p$ is odd and modulo $4$ if $p=2$.

\subsection{Exceptional Groups}
We begin with the case of exceptional types, but excluding Suzuki and Ree groups for now. We remark that, throughout, $\type{E}^\epsilon_6(r)$
 will denote $\type{E}_6(r)$ when $\epsilon=1$ and $\tw{2}\type{E}_6(r)$ when $\epsilon=-1$.
 
\begin{lem}\label{lem:exceptinitial}
Let $H=\bH^F$ be a quasisimple group of Lie type such that $\bH$ is a simple, simply connected algebraic group of exceptional type (including the case $H=\tw{3}\type{D}_4(r)$) and $F\colon \bH\rightarrow\bH$ is a Frobenius endomorphism defining $H$ over $\FF_r$. Let $G=H/Z$ for some $Z\leq \zent{H}$. Let $p$ and $q$ be two primes dividing $|G|$. Assume that there is no  $\chi\in\irr{B_p(G)}$ with $q\mid\chi(1)$. Then neither $p$ nor $q$ divides $r$ and: 

\begin{enumerate}
\item $q$ is odd;
\item $d_p(r)=d_q(r)$;
\item  a Sylow $q$-subgroup $Q$ of $G$ is abelian; and
\item there is a Sylow $p$-subgroup $P$ of $G$ such that $P$ normalizes $Q$.
\item  Further, either there is a Sylow $p$-subgroup $P$ of $G$ such that $[P,Q]=1$ or $(H, d_q(r)=d_p(r), p)$ is one of:
\begin{itemize}
\item $(\type{E}_6(r), 2, 2)$
\item $(\tw{2}\type{E}_6(r), 1, 2)$
\item $(\tw{3}\type{D}_4(r), 1, 3)$
\item $(\tw{3}\type{D}_4(r), 2, 3)$

\end{itemize}
\end{enumerate}
\end{lem}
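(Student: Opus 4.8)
The plan is to first reduce to the cross-characteristic situation and then run a $\Phi_d$-Harish-Chandra analysis of the unipotent characters lying in $B_p(G)$. To dispose of the defining-characteristic claim, let $S=H/\zent{H}$ be the simple quotient; since $Z\leq\zent{H}$, the group $S$ is a quotient of $G$ by the central subgroup $\zent{H}/Z$, and inflation along $G\twoheadrightarrow S$ preserves character degrees and maps $\Irr(B_p(S))$ into $\Irr(B_p(G))$ (the principal block of a quotient is dominated by the principal block). Hence if $p\mid r$ or $q\mid r$, then Lemma~\ref{lem:liedefining} produces some $\psi\in\Irr(B_p(S))$ with $q\mid\psi(1)$, whose inflation contradicts the hypothesis. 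Therefore $p,q\nmid r$, and from now on both primes are non-defining.

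Next I would set up the machinery. Put $d=d_p(r)$ and $e=d_q(r)$, and let $\mathbf{S}_d$ be a Sylow $\Phi_d$-torus of $(\bH,F)$ with centralizer $\mathbf{L}=\cent{\bH}{\mathbf{S}_d}$ and relative Weyl group $W_d=\norm{\bH^F}{\mathbf{S}_d}/\mathbf{L}^F$, which for each exceptional type is an explicitly known (complex) reflection group. Since unipotent characters are trivial on $\zent{H}$, they descend to $S$ and to $G$, so to extract necessary conditions it suffices to test the hypothesis on the unipotent characters of $B_p(G)$; by the theory of \cite{CE04} these are exactly the members of the principal $d$-Harish-Chandra series, parametrized by $\Irr(W_d)$, and each has an explicitly known generic degree that is a product of values $\Phi_f(r)$. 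Because $q\mid\Phi_e(r)$, the condition ``$q\nmid\chi(1)$ for all $\chi\in\Irr(B_p(G))$'' translates into a uniform bound on the $\Phi_e$-valuation of every generic degree in this series.

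The core of the argument, and the main obstacle, is extracting conclusions (i) and (ii) from this constraint. I would argue that if $e\neq d$, then the generic degree tables already exhibit a unipotent character in the principal $d$-series whose degree is divisible by $\Phi_e(r)$ to a power forcing divisibility by $q$; running this through every exceptional type $\bH$ (including $\tw{3}\type{D}_4$) and every admissible $d$ yields $d=e$, which is (ii). The same tables rule out $q=2$: when $e\in\{1,2\}$ the relative Weyl group $W_e$ is the full (twisted) Weyl group, and its associated unipotent degrees always include an even one, giving (i). This step is exactly where I expect to lean on, and extend, the computations of \cite{mn}; the delicate point is completeness of the case-check and the precise bookkeeping of $\Phi_e$-parts, which must be carried out type-by-type and is most safely verified with explicit degree data.

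Finally, with $d=e$ and $q$ odd in hand, I would read off (iii)--(v) from the structure of $\norm{\bH^F}{\mathbf{S}_d}$. The same divisibility constraint forces $q\nmid|W_d|$, since otherwise a suitable $\phi\in\Irr(W_d)$ would contribute a $q$-divisible generic degree. Hence a Sylow $q$-subgroup $Q$ lies in $\cent{\bH^F}{\mathbf{S}_d}=\mathbf{L}^F$ and is abelian, giving (iii). As $d=e$, both the $p$- and the $q$-part are concentrated in the $\Phi_d$-torus, so a Sylow $p$-subgroup $P$ and $Q$ can both be chosen inside $\norm{\bH^F}{\mathbf{S}_d}$ with $P$ normalizing $Q$, which is (iv). When moreover $p\nmid|W_d|$ one has $P\leq\mathbf{L}^F$ as well, and the abelian structure of the relevant part of $\mathbf{L}^F$ forces $[P,Q]=1$; the exceptional configurations in (v) are precisely those where $p\mid|W_d|$ while the hypothesis can still hold, namely the four listed cases where $p=2$ divides $|W_d|$ for $\type{E}_6$ and $\tw{2}\type{E}_6$ and where $p=3$ divides $|W_d|$ for $\tw{3}\type{D}_4$. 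Verifying that these four are the only surviving exceptions again rests on the explicit relative-Weyl-group data.
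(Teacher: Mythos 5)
Your reduction to non-defining characteristic and your use of $d$-Harish-Chandra theory for unipotent characters do match a substantial part of the paper's argument, but your proposal rests on the claim that every needed contradiction can be witnessed by a \emph{unipotent} character in the principal $d_p$-series, and that claim is false. The problematic configurations are exactly those with $d_q(r)\in\{d_p(r),\,p^a d_p(r)\}$ and a nonabelian Sylow $p$-subgroup. Concretely, take $H=\type{E}_6(r)$ simply connected, $p=5$ with $5\,\|\,(r-1)$, and a prime $q\geq 7$ with $q\mid (r-1)$, so $d_p=d_q=1$, $P$ is nonabelian of order $5(r-1)_5^6$, and $Q$ is abelian. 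Since $5$ is good here, the unipotent characters of $B_5(H)$ are precisely the principal series characters, indexed by $\phi\in\Irr(W(\type{E}_6))$, and each generic degree satisfies $D_\phi(1)=\phi(1)$, whence $c_\phi\bigl(D_\phi(r)-\phi(1)\bigr)=(r-1)g(r)$ with $g\in\ZZ[x]$ and $c_\phi$ a $\{2,3,5\}$-number; therefore $D_\phi(r)\equiv\phi(1)\pmod q$. As every character degree of $W(\type{E}_6)$ is a $\{2,3,5\}$-number, \emph{no} unipotent character of $B_5(H)$ has degree divisible by $q$. Yet $(\type{E}_6(r),1,5)$ is not on the exception list in (v), and $[P,Q]=1$ is genuinely impossible (a Sylow $q$-subgroup here satisfies $\cent{H}{Q}=T^F$ abelian), so any proof must produce some $\chi\in\irr{B_5(H)}$ of degree divisible by $q$ --- necessarily non-unipotent. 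Your framework cannot produce it; note also that your proposed characterization of the exceptions as ``the cases where $p\mid |W_d|$'' would wrongly include this configuration.

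This is precisely why the paper's proof runs on \emph{semisimple} characters alongside unipotent ones: for a $p$-element $s\in H^\ast$ of maximal order (too large to lie in a Sylow $\Phi_d$-torus), the character $\chi_s$ has degree $[H^\ast:\cent{H^\ast}{s}]_{r'}$, lies in $B_p(H)$ by \cite{hiss90} or \cite[Thm.~21.13]{CE04}, and is trivial on $\zent{H}$ by \cite{NT13}; the block hypothesis then forces every $p$-element to centralize a Sylow $q$-subgroup, hence a Sylow $d_q$-torus by \cite[Thm.~5.9]{malle07}. This mechanism drives part (i) (via \cite[Thm.~2.2]{BFMMNSST}, which is how $q=2$ is excluded --- your claim that unipotent degrees alone rule out $q=2$ is unsupported when $d_p\notin\{1,2\}$ or $p$ is bad), the elimination of the $d_q=p^ad_p$ and $d_p=d_q$ nonabelian-$P$ cases (e.g.\ $\type{E}_6$ with $d_p=d_q=1$, $\type{E}_8$ with $p=5$, $d_p=4$, and $\tw{3}\type{D}_4$ with $p=3$ via the elements $s_5$, $s_{10}$ of \cite{DM87}), and the self-centralizing-$Q$ alternative in (iii). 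Two further gaps: identifying the unipotent part of $B_p(G)$ with the principal $d_p$-series requires $p$ good (for bad $p$ --- which includes every $p$ occurring in the exceptions of (v) --- one needs \cite{Eng00}, as the paper is careful to invoke), and your deduction of (iii) and (iv) from $q\nmid|W_d|$ needs the explicit structure of centralizers of Sylow tori, as in \cite[Table 3.3]{GM20} and \cite[Thm.~4.10.2]{GLS}, not just the reflection-group data.
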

\begin{proof}
Throughout, the notation for unipotent characters is taken from \cite[Sec.~13.9]{carter}. Further, we use $\Phi_i:=\Phi_i(r)$ to denote the $i$th cyclotomic polynomial in $r$.

Write $S:=H/\zent{H}$, and note that $G\in\{H, S\}$ in these cases. Further, letting $H^\ast=(\bH^\ast)^F$ with $(\bH^\ast, F)$ dual to $(\bH, F)$ (and hence $\bH^\ast$ is simple of adjoint type), we have $S\cong [H^\ast, H^\ast]$ and $\wt{S}\cong H^\ast$, where $\wt{S}$ is the group of inner-diagonal automorphisms of $S$. (See, e.g. \cite[Prop.~24.21, Tab.~22.1]{MT11} and the subsequent discussions, since $S$ is of exceptional type.) Since we have $\Irr(B_p(S))\subseteq \Irr(B_p(G))$, viewed by inflation, we have by Lemma \ref{lem:liedefining} that $r=r_0^f$ for some prime $r_0\not\in\{p,q\}$. Let $d_p:=d_p(r)$ and $d_q:=d_q(r)$.

Now, we claim that  $q$ is odd. Indeed, if $q=2$, then since the group $[H^\ast, H^\ast]$ is simple,  \cite[Thm.~2.2]{BFMMNSST} yields that there is a $p$-element $s\in [H^\ast, H^\ast]$ such that its centralizer in $[H^\ast, H^\ast]$ has even index. 
Since this group is index $2$ or odd in $H^\ast$, the same is true for the centralizer in $H^\ast$. Let $\chi_s$ be the semisimple character of $H$ corresponding to the class of $s$, so that $\chi_s(1)=[H^\ast:\cent{H^\ast}{s}]_{r'}$ is even. 
Further, $\chi_s$  is trivial on $\zent{H}$ by \cite[Lem.~4.4(ii) and Rem.~4.6]{NT13} and lies in $B_p(H)$ using \cite[Cor.~3.4]{hiss90} if $\zent{\bH}$ is connected or \cite[Thm.~21.13]{CE04}  if $p$ is good for $\bH$ and $\cent{\bH^\ast}{s}$ is connected. Namely, the latter holds when $p\nmid|\zent{H}|$ by \cite[Exer.~20.16]{MT11}. So, we are left to consider the case $p=3$ and $S=\type{E}_6(r)$ or $\tw{2}\type{E}_6(r)$.
In these cases, we see by \cite[Sec.~13.9]{carter} that there is a unipotent character whose degree is divisible by $2$ but not by $3$. (Namely, we may take the character $\phi_{20,2}$ and $\phi_{4,1}$, 
 respectively.) But since $B_3(H)$ is the unique unipotent block containing unipotent characters of $3'$-degree  (this follows from \cite[Thm.~6.6]{malle07} and \cite[Thm.~A]{Eng00}) and unipotent characters are trivial on the center, we see these lie in $B_3(G)$.

So, we see that $q$ is odd. Now,  consider the case that $q\mid |\zent{H}|=|\wt{S}/S|$.
Then $S=\type{E}_6^\epsilon(r)$ with $q=3\mid (r-\epsilon)$. If $S=\type{E}_6(r)$ with $q=3\mid (r-1)$, we see that the characters $\phi_{81, 10}, \phi_{6.1}$, and $\phi_{90, 8}$ in the notation of \cite[Sec.~13.9]{carter} have degree divisible by $3$. However, using the explicit decompositions in \cite[Tables 1-2]{BMM} when $d_p$ is non-regular, and the fact that $B_p(H)$ is the unique unipotent block containing unipotent characters of $p'$-degree if $d_p$ is regular (again this follows from \cite[Thm.~6.6]{malle07} and \cite[Thm.~A]{Eng00}), we see that for each $p$, at least one of these three characters lies in $B_p(H)$. When $S=\tw{2}\type{E}_6(r)$ with $3\mid (r+1)$, the same holds, with the characters $\phi_{9,6}'', \phi_{2,4}', \phi_{6,6}''$.

Now, assume that $q\nmid |\zent{H}|=[\wt{S}: S]$, so that a Sylow $q$-subgroup $Q$ of $S$ can be identified with one of $H$ and of $H^\ast\cong \wt{S}$.  Suppose that  there exists some $p$-element $s\in H$ such that the corresponding semisimple character 
$\chi_s\in\irr{\wt{S}}$ satisfies $q\mid \chi_s(1)$. By \cite[Cor.~3.4]{hiss90}, we have $\chi_s\in\irr{B_p(\wt{S})}$.
Then since $q\nmid |\wt{S}/S|$, we obtain that there is some $\chi\in\irr{B_p(S)}\subseteq \irr{B_p(G)}$ with $q\mid \chi(1)$. 
Since $\chi_s(1)=[H: \cent{H}{s}]_{r'}$, we may assume that every $p$-element $s\in H$ centralizes a Sylow $q$-subgroup of $H$. It follows by \cite[Thm.~5.9]{malle07} that every such $s$ also centralizes a Sylow $d_q$-torus of $(\bH, F)$.

Now, by \cite[Prop.~2.12]{BFMMNSST} and also applying \cite[Prop.~2.2]{malle14}, we have either $Q$ is abelian, 
  $\cent{H}{Q}=\zent{H}\zent{Q}$, or $(H,q)$ is as in the list \cite[Prop.~2.12]{BFMMNSST}(b)-(g) and $d_p, d_q\in\{1,2\}$, with $d_p=d_q$ unless possibly if $p=2$. If $\cent{H}{Q}=\zent{H}\zent{Q}$, we obtain every $p$-element $s\in H$ lies in $\zent{H}$, which is not true.

Next, suppose that $(H,q)$ is as in the list \cite[Prop.~2.12]{BFMMNSST}(b)-(g). In each case, we will exhibit a unipotent character whose degree is divisible by $q$ and lies in $B_p(H)$, which will follow from arguments like above and \cite[Thm.~3.10]{BMM} and \cite[Thm.~A]{KM15}.
In case (b),(c), we have $q=5\mid (r-\epsilon)$ and $H=\type{E}_6^\epsilon(r)$ and the characters $\phi_{20,2}$ or $\phi_{4,1}$, in the respective cases $\epsilon=1, \epsilon=-1$, lie in $B_p(H)$ and have degree divisible by $5$.  In case (d), we have $(H,q)=(\type{E}_7(r), 3)$, and we see $\phi_{27,2}$ has degree divisible by $3$ and must lie in $B_p(H)$.
In case (e), we have $(H,q)=(\type{E}_7(r), 5)$, and $\phi_{105, 15}$ has degree divisible by $5$ and lies in $B_p(H)$.
In case (f) and (g), we have $(H,q)=(\type{E}_7(r), 7)$, resp. ($\type{E}_8(r), 7)$, and we see $\phi_{7,1}$, resp. $\phi_{35, 2}$,  has degree divisible by $7$ and  lies in $B_p(H)$.

It follows that $Q$ is abelian. 
Suppose first that $P$ is also abelian. Note that both $p$ and $q$ divide a unique cyclotomic factor of the generic order of $H$, by \cite[Lem.~2.1, Prop.~2.2]{malle14}. If $d_p=d_q$, there is a Sylow $d_q$-torus containing a Sylow $p$-subgroup $P$ and a Sylow $q$-subgroup $Q$, so that $[P,Q]=1$.  

%

So, assume $d_p\neq d_q$.
 Then by applying \cite[Prop.~2.2]{malle14}, we have by \cite[Prop.~2.13]{BFMMNSST} that the Sylow $d_q$-tori of $\bH$ are not maximal tori, and hence $d_q$ is as in \cite[Table 1]{BFMMNSST}. Recall our assumption that any $p$-element centralizes a Sylow $d_q$-torus.
 Since $d_p\neq d_q$ and $p$ divides exactly one cyclotomic polynomial in the generic order of $H$, by the structure of centralizers of Sylow $d_q$-tori given in \cite[Table 3.3]{GM20}, we then have $(H, d_q, d_p)$ is as in Table \ref{tab:except1}.
  In the last column, we list a unipotent character in $B_p(H)$ with degree divisible by $q$, which can be seen by arguing in the same ways as above.

 \begin{table}
 \begin{tabular}{|c|c|c|c|}
 \hline
 $H$ & $d_q$ & $d_p$ & $\chi\in\Irr(B_p(H))$ with $q\mid \chi(1)$\\
 \hline
 $\tw{3}\type{D}_4(r)$ & 1 & 3 & $\tw{3}\type{D}_4[1]$\\
 \hline
 $\tw{3}\type{D}_4(r)$ & 2& 6& $\phi_{2,2}$\\
 \hline
 $\type{E}_6(r)$ & 2 or 4 & 1& $\phi_{64,4}$\\
 \hline

 $\type{E}_6(r)$ & 6 & 3& $\phi_{64,4}$\\
 \hline
 $\tw{2}\type{E}_6(r)$ & 1 or 4 & 2& $\tw{2}\type{A}_{5},1$\\
 \hline

 $\tw{2}\type{E}_6(r)$ & 3 & 6& $\tw{2}\type{A}_{5},1$\\
 \hline

 $\type{E}_7(r)$ & 3 & 1& $\phi_{27,2}$\\
\hline
 $\type{E}_7(r)$ & 4 & 1 or 2& $\phi_{168,6}$\\
\hline
 $\type{E}_7(r)$ & 6 & 2& $\phi_{27,2}$\\
 \hline
 \end{tabular}
 \caption{Possible $(H, d_q, d_p)$ when 
  $d_p\neq d_q$ and $\Phi_{d_p}\mid|\cent{H}{\mathbb{S}_{d_q}}|$ for $\mathbb{S}_{d_q}$ a Sylow $d_q$-torus of $\bH$ that is not maximal}\label{tab:except1}
  \end{table}

So now assume that $P$ is nonabelian. Then note that $p\leq 7$. Further, we have $p\leq 3$ if $H\in\{\type{G}_2(r), \tw{3}\type{D}_4(r), \type{F}_4(r)\}$. If $H=\type{E}_6^\epsilon(r)$, we have either $p\leq 3$ or $p=5$ and $d_p=1$ or $2$, depending on whether $\epsilon=1$ or $-1$.  If $H=\type{E}_7(r)$, then $p\leq 7$ and $d_p\in\{1,2\}$. If $H=\type{E}_8(r)$ then $p\leq 7$ and $d_p\in\{1,2\}$ if $p=7$.

First, suppose that  $H=\tw{3}\type{D}_4(r)$. Then there is a unique unipotent $2$-block by \cite[Thm.~21.14]{CE04}, and we see from the character degrees in \cite[Sec.~13.9]{carter} that for each possible $d_q$, there is a unipotent character of degree divisible by $q$. So we assume $p=3$. 
If $d_q\in\{1,2\}$, we are in the situation that a Sylow $d_q$-torus is not maximal, and we see from Table \ref{tab:except1}  that $d_3=d_q$. (This gives one of the situations in part (v) of our conclusion.)
If $d_q=3$ or $6$, we have $d_q$ is a  regular number for $H$ and a Sylow $d_q$-torus $\mathbb{S}_{d_q}$ of $\bH$ is maximal, so that $3=p\mid|\cent{H}{\mathbb{S}_{d_q}}|=|\mathbb{S}_{d_q}^F|$, so $3=p\mid \Phi_{d_q}$. In particular, this means that $d_q=3^ad_3$ for some $a\geq 0$. Then $d_3=1$, respectively $2$. In this case, we see that the element $s_5$, respectively $s_{10}$ in \cite{DM87}, chosen to have $3$-power order larger than $3$, does not centralize a Sylow $q$-subgroup.
If $d_q=12$, we see again $3\mid \Phi_{12}$, a contradiction.


Next, suppose $H=\type{E}_6(r)$. 
 If $d_p=1$, we have the principal series unipotent characters $\phi_{64,4}$, $\phi_{60,5}$, and $\phi_{81, 6}$
must lie in $B_p(H)$, and at least one of these has  degree divisible by $q$ if $d_q\neq 1.$  If $d_p=d_q=1$, we may consider a $p$-element $s$ of maximal order, which does not divide  the exponent of a Sylow $1$-torus, and hence $s$ does not centralize a Sylow $d_q$-torus.

Hence, we may assume that $d_p=2$ and $p\in\{2,3\}$. Suppose that $d_q=2,4,$ or $6$, so that a Sylow $d_q$-torus is not maximal.  
If $d_q=6$, then we see in Table \ref{tab:except1} that $p\mid\Phi_3\Phi_6$ and we have $p=3$. Then applying the Ennola duality proved in \cite[Thm.~3.3]{BMM}, we see it suffices to exhibit a principal series unipotent character for $H^-:=\tw{2}\type{E}_6(r)$ in the case $d_p=1$ whose degree is divisible by $\Phi_3$. The character $\phi_{9,6'}$ of $H^-$ satisfies these properties. If $d_q=4$, then we see $p=2$ and in this case it similarly suffices to note that the character $\phi_{4,1}$ of $H^-$ is a principal series unipotent character with degree divisible by $\Phi_4$.
Now suppose $d_q=2$. When $p=3$, we again see that the maximal order of a $3$-element does not divide the size of the centralizer of a Sylow $2$-torus, whose order polynomial is divisible only by the cyclotomic polynomials $\Phi_1$ and $\Phi_2$. This leaves only the case $p=2$, as indicated in the statement.

Then we may assume that a Sylow $d_q$-torus  is  a maximal torus of $\bH$.  Either $d_q$ is regular, and hence $d_q=p^ad_p$ for some $a\geq 0$ since $p$ divides the order of a self-centralizing Sylow $d_q$-torus, or $d_q=5$.
When $d_q=5$, similar to before   by the Ennola duality \cite[Thm.~3.3]{BMM}, it suffices to note that the unipotent character $\phi_{4,1}$ of $H^-:=\tw{2}\type{E}_6(r)$ is a principal series character in $B_p(H^-)$ when $d_p=1$ whose degree is divisible by $\Phi_{10}$. So we finally suppose that $d_q$ is regular, so $d_q=p^ad_p$.
Since we are also assuming a Sylow $d_q$-torus is maximal, the only option is  $d_q=8$.
Here we again appeal to the case of $H^-$, with $d_q=8$. The principal series unipotent character $\phi_{2,4'}$ of $H^-$ has degree divisible by $\Phi_8$, yielding the result in this case.

The cases for  $H=\tw{2}\type{E}_6(r)$ not yet considered follow from the case of $\type{E}_6(r)$ by the Ennola duality proved in \cite[Thm.~3.3]{BMM} and at times arguing as above with $p$-elements of maximal order.

Next, suppose $H=\type{E}_7(r)$. Recall that again $d_p\in\{1,2\}$ in this case. The case $d_p=1$ is completed very similar to the case of $\type{E}_6(r)$ above. Namely, for each $d_q\neq 1$ such that $\Phi_{d_q}$ divides the order polynomial of $H$, there is a principal series unipotent character of degree divisible by $\Phi_{d_q}$, and when $d_p=d_q=1$, we may again consider a $p$-element of maximal order. The case $d_p=2$ then follows by the Ennola duality proved in \cite[Thm.~3.3]{BMM}. The cases of $H=\type{F}_4(r)$ and $\type{G}_2(r)$ are completed similarly.

Finally, let $H=\type{E}_8(r)$. Here the Sylow $d_q$-torus is maximal. Then if $d_q$ is regular, we have $d_q=p^ad_p$ as before. If  $d_p\in\{1,2\}$, we may argue similarly to the preceeding cases. So, suppose $p=5$ and $d_p=4$. Here, $d_p$ is a regular number, so as before, $B_p(H)$ is the unique block containing $p'$-degree unipotent characters. If $d_q$ is regular, we see that  $d_q\in\{4,20\}$. Again taking $s$ to be a $5$-element of maximal order, we see $|s|$ does not divide the exponent of a Sylow $\Phi_{d_q}$-torus, and hence $s$ does not centralize a Sylow $q$-subgroup.
If $d_q$ is not regular, then $d_q\in\{7, 9, 14, 18\}$, and we see the unipotent character $\phi_{112,3}$ has degree divisible by $\Phi_{d_q}$ if $d_q\neq 9$, but not divisible by $p=5$. Similarly, the unipotent character $\phi_{567,6}$ has degree divisible by $\Phi_9$ but not by $p=5$.

Now, suppose that we are in one of the situations in (v). Let $d_p=d_q=:d$. We may identify $P\in\Syl_p(G)$ and $Q\in\Syl_q(G)$ with Sylow subgroups of $H$. If $p$ is odd, then by \cite[Thm.~4.10.2]{GLS}, we have $ P$ is conjugate to a group of the form $P_T\rtimes P_W$, where $P_T$ lies in a Sylow $d$-torus of $H$ and $P_W$ normalizes that torus. But similarly, $ Q=Q_T$ lies in a Sylow $d$-torus. Then these may be chosen such that $[P_T, Q_T]= 1$ and $P_W$ normalizes $Q_T$. If $p=2$, then we still have $P_T\lhd  P$ and  $[P_T, Q_T]= 1$, and $ P/P_T$ still normalizes the Sylow $d$-torus. This yields (iv), completing the proof.
\end{proof}

We next address the case of Suzuki and Ree groups.

\begin{lem}\label{lem:suzree}
Let $G$ be a simple Suzuki or Ree group $\tw{2}\type{G}_2(3^{2k+1})$, $\tw{2}\type{B}_2(2^{2k+1})$, or $\tw{2}\type{F}_4(2^{2k+1})$. Write $r^2=3^{2k+1}$, respectively $2^{2k+1}$. Let $p$ and $q$ be primes dividing $|G|$ such that $\Irr(B_p(G))$ contains no character of degree divisible by $q$. Then neither $p$ nor $q$ divides $r^2$ and:
\begin{itemize}
\item $q$ is odd;
\item $d_p(r^2)=d_q(r^2)$;
\item  a Sylow $q$-subgroup $Q$ of $G$ is abelian; and
\item there is a Sylow $p$-subgroup $P$ of $G$ such that $P$ normalizes $Q$.
\item  Further, either there is a Sylow $p$-subgroup $P$ of $G$ such that $[P,Q]=1$ or $G=\tw{2}\type{G}_2(r^2)$ with $p=2$ and $q\mid (r^2+1)$.
\end{itemize}
\end{lem}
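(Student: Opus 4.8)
The plan is to follow the template of the proof of Lemma~\ref{lem:exceptinitial}, specialized to the three families $\tw{2}\type{B}_2(r^2)$, $\tw{2}\type{G}_2(r^2)$, and $\tw{2}\type{F}_4(r^2)$. A first simplification is that these simple groups have trivial center and are self-dual, so that $G=S=\wt{S}$ and every semisimple class of $G$ affords a semisimple character of degree $[G:\cent{G}{s}]_{r'}$. I would begin by applying Lemma~\ref{lem:liedefining} to conclude that neither $p$ nor $q$ is the defining prime, so $p,q\nmid r^2$. For $\tw{2}\type{B}_2(r^2)$ and $\tw{2}\type{F}_4(r^2)$ the defining prime is $2$, so $p$ and $q$ are automatically odd and the assertion that $q$ is odd is immediate. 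For $\tw{2}\type{G}_2(r^2)$ the defining prime is $3$, and I would rule out $q=2$ by choosing a $p$-element $s$ inside one of the cyclic maximal tori (of orders $r^2\pm1$ or $r^2\pm\sqrt{3}\,r+1$) for which $\chi_s(1)=[G:\cent{G}{s}]_{r'}$ is even; since $\chi_s\in\irr{B_p(G)}$ by \cite[Cor.~3.4]{hiss90}, this contradicts the hypothesis. This is exactly the specialization of the argument using \cite[Thm.~2.2]{BFMMNSST} in Lemma~\ref{lem:exceptinitial}.

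The core dichotomy is unchanged: if some $p$-element $s$ fails to centralize a Sylow $q$-subgroup, then $q\mid\chi_s(1)$ with $\chi_s\in\irr{B_p(G)}$, so by hypothesis every $p$-element centralizes a Sylow $q$-subgroup and, by \cite[Thm.~5.9]{malle07}, a Sylow $d_q$-torus. For $\tw{2}\type{B}_2(r^2)$ and $\tw{2}\type{G}_2(r^2)$ this is decisive, because for odd primes the Sylow subgroups are cyclic: the integer factors $r^2-1$ and $r^2\pm\sqrt{2}\,r+1$ (respectively $r^2-1$, $r^2+1$, $r^2\pm\sqrt{3}\,r+1$) of the order are pairwise coprime, each odd prime divides exactly one of them, and the corresponding Sylow subgroup lies in the associated cyclic maximal torus. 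Thus $Q$ is abelian, and the torus argument forces $p$ and $q$ to divide a common factor, hence to lie in a common cyclic maximal torus $T$; then $P,Q\leq T$, giving $[P,Q]=1$ and $d_p=d_q$. When $p$ and $q$ would divide distinct factors---which is compatible with $d_p=d_q$ for the two factors of a single twisted polynomial---I would produce a unipotent character of $B_p(G)$ of degree divisible by $q$ from the explicit unipotent degrees, placing it in $B_p(G)$ via \cite[Thm.~6.6]{malle07} and \cite[Thm.~A]{Eng00}.

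The genuinely new phenomenon, and the source of the single exception, is $p=2$ in $G=\tw{2}\type{G}_2(r^2)$, where a Sylow $2$-subgroup is elementary abelian of order $8$ and lies in no cyclic maximal torus. Here I would exploit the involution centralizer $\cent{G}{t}\cong\ZZ/2\times\PSL_2(r^2)$. Since $|\cent{G}{t}|=r^2(r^2-1)(r^2+1)$ gives $[G:\cent{G}{t}]_{r'}=r^4-r^2+1$, a prime $q$ dividing $r^4-r^2+1$ forces $\chi_t(1)$ to be divisible by $q$, ruling that case out, so $q\mid(r^2-1)(r^2+1)$. If $q\mid r^2-1$ then $d_q=1\neq2=d_2$, and I would exclude it by exhibiting a unipotent character of $B_2(G)$ of degree divisible by $r^2-1$. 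This leaves $q\mid r^2+1$, where $d_q=2=d_2$; now $Q$ lies in the cyclic $\Phi_2$-part inside $\PSL_2(r^2)$, whose normalizer there is dihedral, so a full Sylow $2$-subgroup of $G$ (the central involution together with a Sylow $2$-subgroup of that dihedral group) normalizes but inverts $Q$. This yields that $P$ normalizes $Q$ with $[P,Q]\neq1$, precisely the exceptional family of the final assertion.

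The step I expect to be the main obstacle is $\tw{2}\type{F}_4(r^2)$: it has the largest rank, the most maximal tori (including two cyclic cuspidal tori), and the most unipotent characters, and, because several twisted-cyclotomic factors appear with multiplicity, its Sylow subgroups need not be cyclic. Here I would establish that $Q$ is abelian and that $[P,Q]=1$ using the structural results \cite[Prop.~2.12]{BFMMNSST} and \cite[Prop.~2.2]{malle14} together with the $\Phi_d$-torus argument of Lemma~\ref{lem:exceptinitial}, producing for each pair of distinct factors a unipotent character of $B_p(G)$ of degree divisible by $q$; this requires the same careful bookkeeping of unipotent degrees and block distributions as in Table~\ref{tab:except1}, and confirming that no such exceptional behavior survives (so that $\tw{2}\type{F}_4(r^2)$ contributes no exception). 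A secondary delicate point is verifying, in the $\tw{2}\type{G}_2(r^2)$ case with $p=2$ and $q\mid r^2+1$, that no character of $B_2(G)$ has degree divisible by $q$, so that this exception is genuine and the weaker conclusion that $P$ normalizes $Q$ cannot be strengthened to $[P,Q]=1$.
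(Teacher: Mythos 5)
The one genuine gap is the $\tw{2}\type{F}_4(2^{2k+1})$ case, which you yourself flag as ``the main obstacle'' and never carry out: what you give there is a plan, not a proof. Worse, the plan as stated --- produce, for each pair of distinct order factors, a unipotent character of $B_p(G)$ of degree divisible by $q$ --- glosses over precisely the delicate configuration. Writing $q_0=r^2$, the order of $\tw{2}\type{F}_4(q_0)$ involves the coprime twisted factor pairs $q_0\pm\sqrt{2q_0}+1$ and $q_0^2\pm\sqrt{2q_0^3}+q_0\pm\sqrt{2q_0}+1$, and the two members of each pair give the \emph{same} value of $d_p(r^2)$ (namely $4$, resp.\ $12$). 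So when $p$ and $q$ divide the two different members of one pair, no argument phrased in terms of $d_p=d_q$ or Sylow $d$-tori alone can separate them; one needs actual knowledge of the block distribution for $\tw{2}\type{F}_4(q_0)$. This is exactly why the paper's treatment of this family is a single sentence: it cites Malle's explicit determination of the unipotent characters and blocks of $\tw{2}\type{F}_4(q_0)$ \cite[Bem.~1]{malle91}. Without that input (or an equivalent computation) your proof is incomplete for this family; note the paper's $\tw{2}\type{B}_2$ and $\tw{2}\type{G}_2$ cases likewise lean on explicit published data (Burkhardt \cite{burkhardt}, Ward \cite{ward}, Hiss \cite{hiss91}) rather than on generic arguments.

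A second, non-fatal, error: your fallback for $\tw{2}\type{B}_2$ and $\tw{2}\type{G}_2$ in the distinct-factor case --- ``produce a unipotent character of $B_p(G)$ of degree divisible by $q$'' --- provably cannot work for the Suzuki groups. $\mathrm{Sz}(q_0)$ has exactly four unipotent characters, of degrees $1$, $q_0^2$, and $2^k(q_0-1)$ (twice); none of these is divisible by any prime dividing $q_0^2+1$, so in the configuration $p\mid q_0+2^{k+1}+1$, $q\mid q_0-2^{k+1}+1$ (both with $d=4$) there is no unipotent witness. What saves you is your primary ``torus argument'': a nontrivial $p$-element $s$ has $\cent{G}{s}$ equal to the self-centralizing cyclic torus of order $q_0+2^{k+1}+1$, so the associated semisimple (exceptional) character $\chi_s\in\irr{B_p(G)}$ has degree $(q_0-1)(q_0-2^{k+1}+1)$, which is divisible by $q$. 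Hence for $\tw{2}\type{B}_2$ and $\tw{2}\type{G}_2$ your argument does go through on its primary branch --- and your treatment of $\tw{2}\type{G}_2$ with $p=2$ via the involution centralizer $\ZZ/2\times\PSL_2(q_0)$ is a nice self-contained alternative to the paper's citations of Ward and Hiss --- but the unipotent fallback should be deleted, and, more importantly, an actual argument for $\tw{2}\type{F}_4$ must be supplied.
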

\begin{proof}
Note that by Lemma \ref{lem:liedefining}, neither $p$ nor $q$ divides $r^2$, and as in the beginning of the proof of Lemma \ref{lem:exceptinitial}, we have $q\neq 2$.

First suppose that $G=\tw{2}\type{G}_2(3^{2k+1})$. If $p=2$, we see by \cite[p.~74]{ward} that there is a character of degree divisible by $q$ in $B_2(G)$ unless $q\mid (r^2+1)$. Note that in the latter situation, we have $Q$ is abelian and  we also have $4\mid (r^2+1)$ so that $d_2(3^{2k+1})=d_q(3^{2k+1})$. Further, from \cite[Sec.~2.1]{hiss91}, we see that $\norm{G}{Q}$ contains a Sylow $2$-subgroup in this case. Now suppose that both $p$ and $q$ are larger than 3. Then both $P$ and $Q$ are abelian  and we see from \cite[Sec.~4.1]{hiss91} that there is $\chi\in\Irr(B_p(G))$ with $q\mid\chi(1)$ whenever $d_p\neq d_q$. So $d_p=d_q$ and $[P,Q]=1$ as both can be chosen to lie in the same cyclic torus.

Next, let $G=\tw{2}\type{B}_2(2^{2k+1})$. Then both $p$ and $q$ are at least $5$ and the Sylow $p$ and $q$-subgroups are cyclic. If $d_p(r^2)\neq d_q(r^2)$, we see from \cite{burkhardt} that $B_p(G)$ contains a character of degree divisible by $q$. Hence we again have $d_p=d_q$ and $P$ and $Q$ can be chosen to lie in the same cyclic torus, giving $[P,Q]=1$.

Finally, let $G=\tw{2}\type{F}_4(2^{2k+1})$. Then we see using \cite[Bem.~1]{malle91} that $B_p(G)$ contains a character of degree divisible by $q$ except possibly when  $d_p(r^2)=d_q(r^2)$ and neither $p$ nor $q$ is 3. In that situation, there again exist $P$ and $Q$ in the same  torus and $[P,Q]=1$.
\end{proof}

\subsection{Classical Groups}

In the next several statements, we turn our attention to classical groups, aiming to obtain a statement similar to those for exceptional groups above.
Throughout, we will continue to let $p\neq q$ be two primes and let $r$ be a power of some prime $r_0$. As usual, $\GL_n(\epsilon r)$ will mean $\GL_n(r)$ if $\epsilon=1$ and $\operatorname{GU}_n(r)$ if $\epsilon=-1$, and similar for related groups like $\SL_n(\epsilon r)$ and their projective versions. 

As before, our strategy will often be to exhibit a unipotent character in the principal $p$-block whose degree is divisible by $q$. The next lemma covers an initial case.

\begin{lem}\label{lem:unipsTypeAp23}
Let $\wt{G}:=\GL_n(\epsilon r)$ with $n\geq 3$ and let $p\neq  q$ be primes not dividing $r$ but dividing $|\wt{G}|$. Let $e_p:=d_p(\epsilon r)$ and $e_q:=d_q(\epsilon r)$. Assume that $e_p\in\{1,2\}$ and that $e_q>e_p $ but $(n, e_q)\not\in\{(3,3), (4, 2)\}$. Then $B_p(\wt{G})$ contains a unipotent character of degree divisible by $q$.
\end{lem}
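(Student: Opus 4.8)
The plan is to translate the statement into a purely combinatorial problem about partitions of $n$ and then to exhibit a suitable partition explicitly. Recall that the unipotent characters of $\wt G=\GL_n(\epsilon r)$ are canonically labelled by partitions $\lambda\vdash n$, and that (up to sign) the degree of the corresponding character $\chi_\lambda$ is
\[
\chi_\lambda(1)=(\epsilon r)^{b(\lambda)}\,\frac{\prod_{i=1}^{n}\bigl((\epsilon r)^{i}-1\bigr)}{\prod_{x\in\lambda}\bigl((\epsilon r)^{h(x)}-1\bigr)},
\]
where $b(\lambda)=\sum_i(i-1)\lambda_i$ and $h(x)$ is the hook length of the box $x$. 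Factoring each $(\epsilon r)^{i}-1$ and $(\epsilon r)^{h(x)}-1$ into cyclotomic polynomials, $\chi_\lambda(1)$ equals, up to a power of $r$, a product $\prod_d \Phi_d(\epsilon r)^{a_d(\lambda)}$ with $a_d(\lambda)=\lfloor n/d\rfloor-w_d(\lambda)$, where $w_d(\lambda)$ is the $d$-weight of $\lambda$ (equal to the number of boxes $x$ with $d\mid h(x)$). Since any partition of $n$ has at most $\lfloor n/d\rfloor$ hooks of length divisible by $d$, all exponents $a_d(\lambda)\ge 0$, so there is no cancellation. As $e_q=d_q(\epsilon r)$ gives $q\mid \Phi_{e_q}(\epsilon r)$, we conclude the clean sufficient condition
\[
w_{e_q}(\lambda)<\lfloor n/e_q\rfloor\ \Longrightarrow\ q\mid\chi_\lambda(1).
\]
Equivalently, it suffices to produce a $\lambda$ in the principal $p$-block whose $e_q$-core is \emph{strictly larger} than the (minimal) $e_q$-core of $(n)$. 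Note that $q\mid |\wt G|$ forces $e_q\le n$, so $\lfloor n/e_q\rfloor\ge 1$.

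For the block condition I would invoke the Fong--Srinivasan description of unipotent blocks, in the Broué--Malle--Michel formulation: $\chi_\lambda\in\Irr(B_p(\wt G))$ if and only if $\lambda$ has the same $e_p$-core as the label $(n)$ of the trivial character. Because $e_p\in\{1,2\}$ this is very mild. When $e_p=1$ every partition of $n$ lies in the principal block. When $e_p=2$ the block consists exactly of the partitions whose $2$-core equals the $2$-core of $(n)$, namely $\emptyset$ if $n$ is even and $(1)$ if $n$ is odd. Thus the statement reduces to: find $\lambda\vdash n$ with the prescribed $e_p$-core and with $w_{e_q}(\lambda)<\lfloor n/e_q\rfloor$.

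To build such a $\lambda$ I would start from an $e_q$-core $\kappa$ of size $s$ with $s\equiv n \pmod{e_q}$ and $n\bmod e_q<s\le n$, and then enlarge $\kappa$ to size $n$ by successively adding $e_q$-hooks; since adding $e_q$-hooks preserves the $e_q$-core, the resulting $\lambda$ has $w_{e_q}(\lambda)=(n-s)/e_q<\lfloor n/e_q\rfloor$. In the case $e_p=1$ this finishes the proof once the existence of $\kappa$ in the correct size range is checked, which is standard combinatorics of $t$-cores (for $e_q\ge 4$ every size occurs; for $e_q\in\{2,3\}$ one uses explicit small cores such as $(2,1)$, $(3,1)$, $(4,2)$, $(3,2,1)$); the only sizes for which no admissible $\kappa$ exists turn out to be exactly $(n,e_q)=(4,2)$ and $(3,3)$, which are precisely the excluded pairs. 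In the case $e_p=2$ one must in addition keep the $2$-core fixed; here I would choose $\kappa$ and the added $e_q$-hooks compatibly with the $2$-quotient, or equivalently carry out the construction on the $2$-abacus, so that the $2$-core $\emptyset$ or $(1)$ is preserved throughout.

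The main obstacle is this last point: controlling two different cores of a single partition simultaneously, because the $e_p$-core and the $e_q$-core interact nontrivially, especially when $e_q$ is odd so that adjoining an $e_q$-hook flips the parity governing the $2$-core. I expect the cleanest route for $e_p=2$ is to phrase the whole construction through the $2$-quotient $(\lambda^{(0)},\lambda^{(1)})$, translating ``$w_{e_q}(\lambda)<\lfloor n/e_q\rfloor$'' into a condition on the pair that can always be satisfied under the hypotheses $e_q>2$ and $n\ge 4$. I would then dispose of the handful of small $n$ sitting near the two excluded pairs by direct inspection, which simultaneously confirms that the exclusions $(3,3)$ and $(4,2)$ are genuinely necessary.
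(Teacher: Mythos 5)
Your reduction is essentially the paper's: both arguments label the unipotent characters of $\wt{G}$ by partitions of $n$, use the Fong--Srinivasan block description (together with the uniqueness of the unipotent $2$-block when $p=2$) to identify the principal block, and reduce the lemma to producing a partition of $n$ with the prescribed $e_p$-core whose $e_q$-core has size strictly larger than $m:=n\bmod e_q$. Your justification of the divisibility criterion via the hook-length/cyclotomic factorization of the generic degree is a perfectly valid alternative to the paper's route (which cites Malle's $d_q$-Harish-Chandra series result and \cite{GM20}), and your identification of $(4,2)$ and $(3,3)$ as exactly the obstructed pairs agrees with the paper. One small inaccuracy: your block criterion stated as an ``if and only if'' is false for $p=2$, since then \emph{all} unipotent characters lie in $B_2(\wt{G})$; but you only use the harmless direction, so nothing breaks.

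The genuine gap is the one you flagged yourself: the case $e_p=2$. There you must exhibit a partition of $n$ whose $2$-core equals that of $(n)$ \emph{and} whose $e_q$-core has size $>m$, and your proposal only says you would choose the core $\kappa$ and the added $e_q$-hooks ``compatibly with the $2$-quotient'' and that you ``expect'' the resulting condition to be satisfiable for $e_q>2$, $n\geq 4$. That expectation is precisely where the work lies. When $e_q$ is even, adding $e_q$-hooks does preserve the $2$-core (beta-number parities are unchanged), but you still need the starting $e_q$-core $\kappa$ itself to have trivial $2$-core, which is an extra constraint your existence argument ignores; when $e_q$ is odd, every added $e_q$-hook flips a parity, so the $2$-core of the final partition depends on how many hooks are added ($w-1$ of them) and where they are placed. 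The paper resolves exactly this by writing down explicit partitions and doing a parity case analysis: the $e_q$-core $\tau=(1^{m+1},e_q-1)$ completed either down the first column to $\lambda_1=(1^{e_q(w-1)+m+1},e_q-1)$ (which works when $e_q$ is even) or along a row to $\lambda_2=(1^{m+1},e_q-1+e_q(w-1))$ (which works when $e_q$ and $m$ are both odd), the alternative core completed to $\lambda_3=(1^{m},2,e_q-2+e_q(w-1))$ for the remaining parities, and ad hoc partitions such as $(1,1,n-2)$ and $(2,n-2)$ for the residual cases $e_q=3$, $m\in\{0,2\}$, plus $(1,n-1)$ and $(1,2,n-3)$ for $e_q=2$. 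Without this construction, or an actually worked-out $2$-abacus substitute for it, your argument proves the lemma only when $e_p=1$.
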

\begin{proof}
The unipotent characters of $\wt{G}$ correspond to partitions of $n$. If $p=2$, all unipotent characters lie in $B_p(\wt{G})$ by \cite[Thm.~21.14]{CE04}. Otherwise, by \cite{FS82} the unipotent characters in $B_p(\wt{G})$ are those whose $e_p$-cores are the same as that of the trivial character, which corresponds to the partition $(n)$. Namely, $B_p(\wt{G})$ contains all unipotent characters when $e_p=1$ and contains those with $2$-core $(n\mod 2)$ if $e_p=2$.

First, assume $e_q\geq 3$. Then it therefore suffices to show that there is a unipotent character of degree divisible by $q$ corresponding to a partition with  $2$-core $(n\mod 2)$.

Let $d_q:=d_q(r)$. Now, by \cite[Cor. 6.6]{malle07}, a unipotent character has degree divisible by $q$ if it does not lie in a $d_q$-Harish-Chandra series  $(L, \lambda)$ with $L=\cent{\wt{G}}{\mathbb{S}_{d_q}}$ the centralizer of a Sylow $d_q$-torus and $\lambda$ a unipotent character of $L$.  Further, letting $n=we_q+m$ with $0\leq m<e_q$, we have any unipotent character lying in such a $d_q$-Harish-Chandra series would correspond to a partition of $n$ with $e_q$-core  of size $m$, using \cite[Cors. 4.6.5 and 4.6.7]{GM20}. Hence, it suffices to exhibit a partition of $n$ with $e_q$-core not of size $m$ and $2$-core  $(n\mod 2)$.

If $m\not\in\{0, e_q-1\}$, then the partition $\tau:=(1^{m+1}, e_q-1)$ is an $e_q$-core of size $e_q+m>m$. The partition $\lambda_1:=(1^{e_q(w-1)+m+1}, e_q-1)$ then has $e_q$-core $\tau$ and has $2$-core $(n\mod 2)$ if $e_q$ is even. If $e_q$ and $m$ are both odd, then the partition $\lambda_2:=(1^{m+1}, e_q-1+e_q(w-1))$ instead has $e_q$-core $\tau$ and has $2$-core $(n\mod 2)$. Note that if $e_q=m+2$ or if $m=1$, then one of $\lambda_1$ or $\lambda_2$ satisfies our statement.

Now suppose $m\not\in\{1, e_q-2\}$ (and now including the cases $m=0$ or $e_q-1$). Then the partition $\mu:=(1^{m}, 2, e_q-2)$ is an $e_q$-core for $e_q\geq 4$, again of size $e_q+m$. If $e_q\geq 4$ is even or $m$ is even with $e_q\geq 5$ odd, then $\lambda_3:=(1^{m}, 2, e_q-2+e_q(w-1))$ has $e_q$-core $\mu$ and $2$-core $(n\mod 2)$.

We are therefore left with the case $e_q=3$ and $m\in\{0,2\}$. If $m=2$ and $e_q=3$, the partition $(1, 1, n-2)$ has $3$-core $(1, 1, 3)$ and $2$-core $(n\mod 2)$. So, assume $m=0$ and $e_q=3$, so $3\mid n$.  Our assumption  $(n, e_q)\neq (3,3)$ means that $n\geq 6$, so the partition $(2, n-2)$ has $3$-core $(2, 4)$ and $2$-core $(n\mod 2)$. This completes the proof when $e_q\geq 3$.

Finally, assume $e_p=1$ and $e_q=2$. From the discussions above, we have $B_p(\wt{G})$ contains every unipotent character and it suffices to know that there is a partition of $n$ with $2$-core different from $(n\mod 2)$.  If $n$ is odd, the partition $(1, n-1)$ has $2$-core $(1,2)$, satisfying this condition. If $n\geq 6$ is even, the partition $(1,2,n-3)$ has $2$-core $(1,2,3)$, again satisfying our condition. Given our assumption $(n,e_q)\neq (4,2)$, this completes the proof.
\end{proof}

In the following, let $r$ be a power of a prime $r_0$ and let $\wt{H}$ be one of $\GL_n(\epsilon r)$ with $n\geq 5$, $\Sp_{2n}(r)$ with $n\geq 2$, $\SO_{2n+1}(r)$ with $n\geq 3$,  or $\SO^\pm_{2n}(r)$ with $n\geq 4$. Assume that both $p$ and $q$ divide $|\wt{H}|$ and that $r_0\not\in\{p,q\}$. Let $d_p:=d_p(r)$ and $d_q:=d_q(r)$ and let $e_p$ denote $d_p(\epsilon r)$ if  $\wt{H}=\GL_n(\epsilon r)$ and $e_p:=d_p(r^2)$ otherwise, and similar for $e_q$. In the latter cases, let $\epsilon_p$ and $\epsilon_q$ be the numbers in $\{\pm 1\}$ such that $p\mid (r^{e_p}-\epsilon_p)$ and $q\mid (r^{e_q}-\epsilon_q)$.
The next observation extends the last. (Note that the cases $n\in\{3,4\}$ for $\GL_n(\epsilon r)$ are covered in the previous lemma.)

\begin{lem}\label{lem:unipsclassical}

Keep the hypotheses and notation in the preceeding paragraph. Assume that $e_p\mid e_q$  but that $e_p\neq e_q$. Further assume that $\epsilon_p=\epsilon_q$ (when relevant) and that $p\mid (r^{e_q}-\epsilon_q)$. Then $B_p(\wt{H})$ contains a unipotent character of degree divisible by $q$.
\end{lem}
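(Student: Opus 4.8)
The plan is to exhibit, in each of the four families, a single unipotent character $\chi$ of $\wt{H}$ that simultaneously lies in $B_p(\wt{H})$ and has $q\mid\chi(1)$, by controlling two combinatorial invariants of the symbol (a partition, in type $\type{A}$) labelling $\chi$. For membership in the principal block I would invoke Fong--Srinivasan \cite{FS82}, which says that two unipotent characters lie in the same $p$-block exactly when the associated symbols have the same $e_p$-core, with the hook-versus-cohook theory on symbols dictated by $\epsilon_p$ (i.e.\ by whether $d_p(r)=e_p$ or $2e_p$); when $p=2$ this is replaced by \cite[Thm.~21.14]{CE04}, so that every unipotent character already lies in $B_2(\wt{H})$. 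For divisibility of the degree I would reuse the mechanism of Lemma \ref{lem:unipsTypeAp23}: by \cite[Cor.~6.6]{malle07} a unipotent character has degree divisible by $q$ unless it belongs to the principal $d_q$-Harish-Chandra series, and by \cite[Cors.~4.6.5 and 4.6.7]{GM20} the members of that series are exactly the characters whose symbols have $e_q$-core of the minimal possible size, namely that of the trivial character. Thus it suffices to produce a symbol with the same $e_p$-core as the trivial one but with a strictly larger $e_q$-core.

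The key point is an abacus observation that turns the hypotheses $e_p\mid e_q$, $e_p\neq e_q$, $\epsilon_p=\epsilon_q$, and $p\mid(r^{e_q}-\epsilon_q)$ into a usable move. On the beta-set realization of the symbol, each $e_p$-runner splits into $e_q/e_p$ distinct $e_q$-runners; the divisibility $p\mid(r^{e_q}-\epsilon_q)$ together with $\epsilon_p=\epsilon_q$ is precisely what makes this refinement compatible with the hook/cohook dichotomy (in the minus-sign case $\epsilon_p=\epsilon_q=-1$ it forces $e_q/e_p$ to be odd, which is the correct parity), so that an $e_q$-modification restricts to $e_p$-modifications of the same type. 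Since $e_p\neq e_q$ gives $e_q/e_p\geq 2$, every $e_p$-runner carries at least two distinct $e_q$-subrunners, and I can transfer a bead between two of them, chosen so as to unbalance the $e_q$-occupations. Such a transfer keeps the bead on its original $e_p$-runner---hence fixes every $e_p$-runner occupation number and so preserves the $e_p$-core, keeping $\chi$ in $B_p(\wt{H})$---while altering the $e_q$-runner occupation numbers and thereby enlarging the $e_q$-core beyond its minimum, forcing $q\mid\chi(1)$.

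Concretely I would start from the symbol of the trivial character, whose $e_q$-occupations are as balanced as possible, and perform exactly one such bead transfer, recording the resulting character type by type (as in the table of Lemma \ref{lem:exceptinitial}). The rank bounds $n\ge 5$ for $\GL_n(\epsilon r)$, $n\ge 2$ for $\Sp_{2n}(r)$, $n\ge 3$ for $\SO_{2n+1}(r)$, and $n\ge 4$ for $\SO^\pm_{2n}(r)$ supply enough beads for the transfer to be available and for the new symbol to be a legitimate symbol of the correct rank and defect; in type $\type{A}$ this reduces to the partition manipulation already carried out in Lemma \ref{lem:unipsTypeAp23}, now under the cleaner hypothesis $e_p\mid e_q$, which makes each $e_q$-hook an exact union of $e_p$-hooks.

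The main obstacle will be the symbol bookkeeping in types $\type{B}$, $\type{C}$, and $\type{D}$. Unlike the transparent partition picture of type $\type{A}$, here one must track the defect of the symbol and decide, according to $\epsilon_q$, whether the relevant operation is by a hook or by a cohook, and then verify that the single transfer genuinely fixes the $e_p$-core while displacing the $e_q$-core and still yields a valid symbol; the assumption $\epsilon_p=\epsilon_q$ is exactly what lets one and the same move serve both the $p$- and the $q$-side at once. A secondary nuisance is the handful of low-rank configurations in which a naive transfer would collide with the first column of the symbol; I expect these to be ruled out by the stated rank bounds, after isolating and checking a small number of boundary cases directly.
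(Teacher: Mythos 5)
Your overall framework coincides with the paper's: block membership via \cite{FS82}, \cite{FS89} (and \cite[Thm.~21.14]{CE04} when $p=2$), degree divisibility via \cite[Cor.~6.6]{malle07} together with \cite{GM20}, reducing everything to producing a partition or symbol with trivial $e_p$-core (resp.\ cocore) whose $e_q$-core (resp.\ cocore) has rank different from $m$, where $n=we_q+m$. However, your bead-transfer mechanism has two genuine gaps. First, a single transfer of a bead between distinct $e_q$-runners changes the rank: moving a bead from position $a+je_q$ to $b+j'e_q$ with $a\not\equiv b \pmod{e_q}$ changes the labelled rank by $(b-a)+e_q(j'-j)\neq 0$, so ``exactly one such bead transfer'' never produces a unipotent character of $\wt{H}$; you need compensating moves. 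Second, and more seriously, you have misstated the degree criterion: what must be avoided is not just the principal $d_q$-Harish-Chandra series but \emph{every} series $(L,\lambda)$ with $L$ the centralizer of a Sylow $d_q$-torus and $\lambda$ arbitrary; equivalently, the $e_q$-core must have rank $\neq m$, not merely differ from the trivial core. ``Unbalancing the $e_q$-occupations'' does not ensure this, because several distinct $e_q$-cores of rank exactly $m$ can have trivial $e_p$-core. Concretely, take $e_p=3$, $e_q=6$, $n=9$ (so $m=3$): the partition $(7,1,1)$ is reachable from $(9)$ by precisely your rank-corrected move (two compensating transfers, each staying on its $3$-runner but changing $6$-runner), has trivial $3$-core, yet its $6$-core is $(1,1,1)$ of rank $m=3$; the corresponding unipotent character lies in $B_p(\wt{H})$ and in a $d_q$-series, and its degree is generically \emph{not} divisible by $q$. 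So the move, as described, can land exactly where the argument needs it not to.

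This is why the paper does not argue by a generic unbalancing move: it writes down, case by case, explicit $e_q$-cores of rank $m+e_q$ having trivial $e_p$-core (Table~\ref{tab:unipcores}), and then completes them to rank $n$ by adding $w-1$ many $e_q$-hooks placed so that they can be peeled off as $e_p$-hooks. Making your sketch rigorous would force essentially the same explicit case analysis, since one must certify both that the target is a genuine $e_q$-core and that its rank exceeds $m$. Separately, the case $\epsilon_p=\epsilon_q=-1$ is not mere bookkeeping: there blocks and series are governed by cocores, and the paper must invoke Olsson's theory of $e_p$-twists \cite[p.~235]{olsson} to convert its table entries into $e_q$-cocores with trivial $e_p$-cocore, together with a parity analysis (using $e_q/e_p$ odd and the parity of $w-1$ to decide which row of the symbol receives the long entry, which in type $\type{D}$ even switches between $\SO_{2n}^{\epsilon}(r)$ and $\SO_{2n}^{-\epsilon}(r)$ data). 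Your proposal correctly identifies the relevant compatibilities but supplies no mechanism playing the role of the twist. Finally, a small correction: type $\type{A}$ does not reduce to Lemma~\ref{lem:unipsTypeAp23}; that lemma assumes $e_p\in\{1,2\}$ and is used here only to allow the assumption $e_p\geq 3$, after which new partitions are still required.
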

\begin{proof}
Note that by Lemma \ref{lem:unipsTypeAp23}, we may assume that $e_p\geq 3$ in the case that $\wt{H}=\GL_n(\epsilon r)$ with $n\geq 5$.
Throughout, let $n=we_q+m$ with $0\leq m<e_q$, and let $m=w_1e_p+m_1$ with $0\leq m_1<e_p$.

The unipotent characters of $\wt{H}$ are labeled by partitions of $n$ if $\wt{H}=\GL_n(\epsilon r)$ and by certain symbols $X\choose Y$ in the other cases (see e.g. \cite[Section 13.8]{carter}). If $p=2$, then every unipotent character lies in $B_p(\wt{H})$, by \cite[Thm.~21.14]{CE04}. If $p$ is odd and $\wt{H}=\GL_n(\epsilon r)$, then two unipotent characters lie in the same $p$-block of $\wt{H}$ if they correspond to partitions that have the same $e_p$-core. In the other cases, if $p$ is odd then two unipotent characters lie in the same $p$-block of $\wt{H}$ if they correspond to symbols with the same $e_p$-core, respectively $e_p$-cocore,  if $\epsilon_p=1$, respectively $\epsilon_p=-1$ (see \cite[Thm.~(7A)]{FS82} and \cite[(12A) and (13B)]{FS89}). Hence the unipotent characters in  $B_p(\GL_n(\epsilon r))$ are those  whose corresponding partition has $e_p$-core $(m_1)$. For $\wt{H}=\Sp_{2n}(r)$ or $\SO_{2n+1}(r)$, the unipotent characters in $B_p(\wt{H})$ are those whose symbol has $e_p$-core (resp. $e_p$-cocore) $m_1 \choose \emptyset$. In $\SO^+_{2n}(r)$, the unipotent characters in $B_p(\wt{H})$ have $e_p$-core (resp. $e_p$-cocore) the same as that of $ n \choose 0$ and in $\SO_{2n}^-(r)$, they have $e_p$-core (resp. $e_p$-cocore) the same as that of ${0, n}\choose \emptyset$. In all cases, we will simply say that a partition or symbol has \emph{trivial $e_p$-core} (resp. \emph{trivial $e_p$-cocore}) if this is the case.

On the other hand, we again have by \cite[Cor. 6.6]{malle07}, that any unipotent character not lying in a $d_q$-Harish-Chandra series  $(L, \lambda)$ with $L$ the centralizer in $\wt{H}$ of a Sylow $d_q$-torus of the underlying algebraic group must necessarily have degree divisible by $q$.  Using \cite[Cors. 4.6.5, 4.6.7, and 4.6.16]{GM20}, the characters in such a series would have $e_q$-core (resp. $e_q$-cocore) of rank $m$.

In Table \ref{tab:unipcores}, we illustrate in each case an $e_q$-core $\mu$ of rank $m+e_q$ that further has trivial $e_p$-core. (Recall here that $e_p> 2$ in the $\GL_n(\epsilon r)$ case.) If $\mu= (\mu_1,\ldots, \mu_t)$ is a partition, one (or both) of either the partition $(1^{e_q(w-1)}, \mu)$ or the partition $(\mu_1, \ldots, \mu_{t-1}, \mu_t+e_q(w-1))$ then has trivial $e_p$-core and $e_q$-core $\mu$.
 If $\mu= {{X}\choose Y}$ is a symbol with $X=x_1, \ldots, x_t$, then the symbol $X' \choose Y$ with $X'=x_1, \ldots, x_{t-1}, x_{t}+e_q(w-1)$ has trivial $e_p$-core and has $e_q$-core $\mu$ (so in particular, of rank not $m$).
  Hence if $\wt{H}=\GL_n(\epsilon r)$ or if $\epsilon_p=1=\epsilon_q$, we have shown that there is a unipotent character of degree divisible by $q$ lying in $B_p(\wt{H})$.

Now suppose $\epsilon_p=-1=\epsilon_q$ with $\wt{H}$ one of the symplectic or special orthogonal groups and $p\mid (r^{e_q}+1)$. Note that in this situation, $\frac{e_q}{e_p}$ must be odd. Let $\mu$ be the symbol in Table \ref{tab:unipcores}. Then Olsson's theory of $e_p$-twists (see \cite[p. 235]{olsson}) yields a symbol $\bar\mu$ (the ``$e_p$-twist of $\mu$") with rank $m+e_q$ and with trivial $e_p$-cocore. Further, in each case we can see that our $\bar\mu$ is an $e_q$-cocore, using that $e_q/e_p$ is odd and the definition of the $e_p$-twist. Now, write $\bar\mu={{x_1, \ldots, x_t}\choose {y_1,\ldots, y_s}}$ with (without loss of generality) $x_t\geq y_s$. Then  we see that if $w-1$ is even, then the symbol $S:={{x_1, \ldots, x_{t-1}, x_t+e_q(w-1)}\choose {y_1,\ldots, y_s}}$ has rank $n$, $e_q$-cocore $\bar\mu$, and has trivial $e_p$-cocore since it can be converted to $\bar\mu$  by removing  $e_p$-cohooks.  If $w-1$ is instead odd, then  since $e_q/e_p$ is odd, the same properties hold  for $S':={{x_1, \ldots, x_{t-1}}\choose {y_1,\ldots, y_s, x_t+e_q(w-1)}}$. We remark that in the latter case, if $\wt{H}=\SO_{2n}^\epsilon(r)$, then $\bar\mu$ corresponds to a unipotent character of $\SO_{2(m+e_q)}^{-\epsilon}(r)$, which is also the type of the $d_q$-split Levi subgroup corresponding to the symbol $S'$. Hence in all cases, we have exhibited a unipotent character in $B_p(\wt{H})$ of degree divisible by $q$.
\end{proof}

\begin{table}
\begin{tabular}{|c|c|c|}
\hline
& $\GL_n(\epsilon r)$ & $\SO_{2n+1}(r)$ or $\Sp_{2n}(r)$  \\
\hline
$m=0$ & $(e_p, e_q-e_p)$ & ${0, e_q-e_p+1}\choose e_p$ \\
 & $e_p\neq 1$ &  \\
\hline
$m_1=0, m>0$& $(1^{m}, e_q)$ & ${1, 2, \ldots, m, e_q+m}\choose {0, 1, \ldots, m-1}$  \\
\hline
$m_1>0, w_1=0$& $(1^{e_q-e_p}, m+e_p)$ & ${1, 2, \ldots, e_q-e_p-1, e_q-e_p, e_q+m}\choose {0, 1, \ldots, e_q-e_p-1}$  \\
\hline
$m_1>0, w_1=1$& $(1^{e_p}, m_1+1, e_q-1)$ & ${1, 2, \ldots, e_q-1, e_q+e_p}\choose {0, 1, \ldots, e_q-1, m+e_q-e_p}$\\
\hline
$m_1>0, w_1\geq 2$& $(1^{e_q-1}, e_p+1, m-e_p)$ &${1, 2, \ldots, e_q-1, e_q+e_p}\choose {0, 1, \ldots, e_q-1, m+e_q-e_p}$ \\
\hline
\hline
\end{tabular}

\begin{tabular}{|c|c|c|}
\hline
&  $\SO^+_{2n}(r)$ & $\SO_{2n}^-(r)$ \\
\hline
$m=0$  & ${ e_q-e_p}\choose e_p$ & ${e_p, e_q-e_p}\choose\emptyset$\\
\hline
$m_1=0, m>0$ & ${1, 2, \ldots, m, e_q+m}\choose {0, 1, \ldots, m}$& ${0,1, 2, \ldots, m, e_q+m}\choose { 1, \ldots, m}$ \\
\hline
$m_1>0, w_1=0$ & ${1, 2, \ldots, e_q-e_p-1, e_q-e_p, e_q+m}\choose {0, 1, \ldots, e_q-e_p}$ & ${0,1, , \ldots, e_q-e_p-1, e_q-e_p, e_q+m}\choose { 1, \ldots, e_q-e_p}$ \\
\hline
$m_1>0, w_1=1$& ${1, 2, \ldots, e_q-1, m_1+e_q, e_q+e_p}\choose {0, 1, \ldots, e_q-1, e_q}$& ${1, 2, \ldots, e_q-1, e_q+e_p}\choose {0, 1, \ldots, e_q-1, e_q,  m+e_q-e_p}$\\
\hline
$m_1>0, w_1\geq 2$ & ${1, 2, \ldots, e_q-1, e_q+e_p, m+e_q-e_p}\choose {0, 1, \ldots, e_q-1, e_q}$& ${1, 2, \ldots, e_q-1, e_q+e_p}\choose {0, 1, \ldots, e_q-1, e_q,  m+e_q-e_p}$\\
\hline
\end{tabular}

\caption{\footnotesize Some $e_q$-cores of rank $e_q+m$ with trivial $e_p$-core when $e_p\mid e_q$, $e_p\neq e_q$}\label{tab:unipcores}

\end{table}

\begin{pro}\label{prop:Lieinitial}
Let $H=\bH^F$ be a quasisimple group of Lie type such that $\bH$ is a simple, simply connected algebraic group and $F\colon \bH\rightarrow\bH$ is a Frobenius endomorphism defining $H$ over $\FF_r$. Let $G$ be a group such that $G=H/Z$ for some $Z\leq \zent{H}$. Let $p$ and $q$ be two primes dividing $|G|$. Assume that  $B_p(G)$ contains no irreducible character of degree divisible by $q$. Then neither $p$ nor $q$ divides $r$ and:
\begin{enumerate}
\item $q$ is odd;
\item $d_p(r)=d_q(r)$;
\item  a Sylow $q$-subgroup $Q$ of $G$ is abelian; and
\item there is a Sylow $p$-subgroup $P$ of $G$ such that $P$ normalizes $Q$.
\item  Further, either there is a Sylow $p$-subgroup $P$ of $G$ such that $[P,Q]=1$, or $G=\PSL_p(\epsilon r)$ with $p\mid (r-\epsilon)$, or $(H, d_q(r), p)$ are as in (v) of Lemma \ref{lem:exceptinitial}.
\end{enumerate}

\end{pro}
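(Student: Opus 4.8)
The plan is to reduce at once to classical types and then play off semisimple characters against unipotent ones. Lemma~\ref{lem:liedefining} shows that neither $p$ nor $q$ can equal the defining characteristic, so $p, q \nmid r$. If $\bH$ is of exceptional type (including $\tw{3}\type{D}_4$) the assertion is precisely Lemma~\ref{lem:exceptinitial}, and if $H$ is a Suzuki or Ree group it is Lemma~\ref{lem:suzree}; in particular the exceptional list in~(v) is inherited from Lemma~\ref{lem:exceptinitial}(v). So I would assume $\bH$ has classical type $A_{n-1}, B_n, C_n$, or $D_n$ and let $\wt{H}$ be the associated isometry group $\GL_n(\epsilon r)$, $\SO_{2n+1}(r)$, $\Sp_{2n}(r)$, or $\SO_{2n}^\pm(r)$. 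Since unipotent characters are trivial on the center and insensitive to the isogeny type, any unipotent character of $\wt{H}$ that lies in $B_p(\wt{H})$ and has degree divisible by $q$ yields one in $B_p(H)$ and hence, by inflation, in $B_p(G)$; this is exactly the output of Lemmas~\ref{lem:unipsTypeAp23} and~\ref{lem:unipsclassical}, so producing such a character will always contradict the hypothesis.

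Next I would secure~(i) and set up the semisimple side. That $q$ is odd follows verbatim from the opening of Lemma~\ref{lem:exceptinitial}: for $q = 2$, \cite[Thm.~2.2]{BFMMNSST} furnishes a $p$-element $s$ whose centralizer in $[H^\ast, H^\ast]$ has even index, and the attached semisimple character $\chi_s$ then has even degree, is trivial on $\zent{H}$ by \cite[Lem.~4.4(ii)]{NT13}, and lies in $B_p$ by \cite[Cor.~3.4]{hiss90}. More generally, if some $p$-element $s \in H^\ast$ satisfies $q \mid [H^\ast : \cent{H^\ast}{s}]$, then $\chi_s(1) = [H^\ast : \cent{H^\ast}{s}]_{r'}$ gives a forbidden degree once one checks $\chi_s$ descends to $G$ (the type-$A$ center being handled separately, as in Lemma~\ref{lem:exceptinitial}). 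I may therefore assume every $p$-element centralizes a full Sylow $q$-subgroup, and so by \cite[Thm.~5.9]{malle07} a Sylow $d_q$-torus $\bbS_{d_q}$ of $(\bH, F)$.

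The crux is~(ii). The torus-centralizer conclusion splits into two cases. If $p \mid \Phi_{d_q}(r)$, then $d_q = p^a d_p$ for some $a \geq 0$; taking $s$ of maximal $p$-order shows $|s|$ cannot divide the exponent of $\bbS_{d_q}$ when $a \geq 1$, so that $a = 0$ and $d_p = d_q$. If instead $p \nmid \Phi_{d_q}(r)$, then the whole Sylow $p$-subgroup must sit inside the classical ``core'' factor of $\cent{\wt{H}}{\bbS_{d_q}}$; comparing the number of $\Phi_{e_p}$-factors in $\wt{H}$ with that of the core forces $e_q \geq e_p$, $e_p \mid e_q$, $\epsilon_p = \epsilon_q$, and $p \mid (r^{e_q} - \epsilon_q)$. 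These are exactly the hypotheses of Lemma~\ref{lem:unipsclassical} (or of Lemma~\ref{lem:unipsTypeAp23} in the small type-$A$ cases $n \le 4$), so if $e_p \neq e_q$ that lemma exhibits a unipotent character of $B_p(\wt{H})$ of degree divisible by $q$---a contradiction. Hence $e_p = e_q$, i.e.\ $d_p = d_q =: d$.

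Finally I would read off~(iii)--(v). Under the standing centralizer hypothesis a non-abelian $Q$ would force $q$ small and a short direct check (as in the exceptional analysis) yields a forbidden character, giving~(iii). For~(iv) and~(v) I would argue as at the close of Lemma~\ref{lem:exceptinitial} via the Sylow-torus structure of \cite[Thm.~4.10.2]{GLS}: one may take $P$ conjugate to $P_T \rtimes P_W$ with $P_T$ inside a Sylow $d$-torus and $P_W$ normalizing it, and $Q = Q_T$ inside the same $d$-torus, whence $[P_T, Q_T] = 1$ and $P_W$ normalizes $Q_T$, so $P$ normalizes $Q$; moreover $[P,Q]=1$ unless $P_W \neq 1$ moves $Q_T$, which in classical type happens only for $G = \PSL_p(\epsilon r)$ with $p \mid (r - \epsilon)$. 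I expect the genuine difficulty to be the classical-type bookkeeping inside~(ii)---matching $d_p, d_q$ to $e_p, e_q$ across the four families, tracking the sign $\epsilon_p$ (so that one uses $e_p$-cores when $\epsilon_p = 1$ but $e_p$-cocores, with the Olsson-twist of Lemma~\ref{lem:unipsclassical}, when $\epsilon_p = -1$), and isolating $\PSL_p(\epsilon r)$ as the unique surviving type-$A$ exception.
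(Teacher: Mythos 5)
Your overall architecture matches the paper's: reduce to classical type, use the hypothesis to force every $p$-element to centralize a Sylow $q$-subgroup (hence a Sylow $d_q$-torus by \cite[Thm.~5.9]{malle07}), extract divisibility conditions from the structure of the torus centralizer, and then invoke Lemmas \ref{lem:unipsTypeAp23} and \ref{lem:unipsclassical} to force $e_p=e_q$. But there are two genuine gaps. First, in (ii) you claim the small type-$A$ cases $n\le 4$ are handled by Lemma \ref{lem:unipsTypeAp23}; they are not. That lemma requires $n\ge 3$, $e_p\in\{1,2\}$, $e_q>e_p$, and explicitly excludes $(n,e_q)\in\{(3,3),(4,2)\}$, while $\SL_2(r)$ meets the hypotheses of neither unipotent lemma. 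These exclusions are not an artifact of the proof: for $\SL_3(\epsilon r)$ with $e_q=3$, for $\SL_4(\epsilon r)$ with $e_q=2$, and for $\SL_2(r)$ with $d_q=2$, there simply is no unipotent character of degree divisible by $q$, so no unipotent argument can close these cases. The paper disposes of exactly them with explicit semisimple characters: for $\SL_2(r)$ any $p$-element of $[H^\ast,H^\ast]$ gives $\chi_s$ of degree $(r+1)/2$ or $r+1$; for $\SL_3(\epsilon r)$ with $e_p=1$ a $p$-element of order dividing $(r-\epsilon)_p$ gives degree divisible by $r^2+\epsilon r+1$; for $\SL_4(\epsilon r)$ with $e_p=1$ the element $\diag(\zeta,\zeta^{-1},1,1)$ gives degree divisible by $r+\epsilon$. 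Your proposal has the semisimple machinery set up but never deploys it here, and as written this step fails.

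Second, and more seriously, your proof of (v) is an assertion rather than an argument. You claim that $P_W\neq 1$ can fail to centralize $Q_T$ ``in classical type only for $G=\PSL_p(\epsilon r)$ with $p\mid(r-\epsilon)$,'' but as a group-theoretic statement about Sylow tori this is false: for instance $\Sp_4(r)$ with $p=2$ and $q\mid(r-1)$ has a nonabelian Sylow $2$-subgroup and a self-centralizing toral Sylow $q$-subgroup, so no commuting pair exists, yet the group is not $\PSL_p(\epsilon r)$. What is actually true --- and what the paper proves --- is that in all such classical cases the block hypothesis must fail. The paper's argument: if no commuting pair exists, take a $p$-element $s\in H^\ast$ whose order does not divide the exponent of $\mathbb{T}$, so $\chi_s\in\Irr(B_p(H))$ has degree divisible by $q$; if $s$ can be chosen in $[H^\ast,H^\ast]$ this contradicts the hypothesis, and otherwise $p\mid|\zent{H}|$, forcing $p=2$ or ($S=\PSL_n(\epsilon r)$ with $p\mid(n,r-\epsilon)$). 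One must then do real work: for odd $p$ with $2p\le n$, construct an element of $[\wt{H}^\ast,\wt{H}^\ast]$ with eigenvalues $\zeta,\zeta^r,\dots,\zeta^{-r^{p-1}}$ of order $(r^p-\epsilon)_p$ to force $n=p$; and for $p=2$, handle $\PSL_n$ by a similar eigenvalue trick, check $\PSp_4$, $\PSp_6$, $\operatorname{P\Omega}_7$ via unipotent characters, and embed $\SL_3(r)$ into the higher-rank symplectic and orthogonal groups to produce a contradicting $2$-element. None of this appears in your proposal, so the isolation of $\PSL_p(\epsilon r)$ as the unique classical exception --- which you yourself flag as the crux --- is left unproved. (Your treatment of (iii) as ``a short direct check'' also conceals a needed citation to \cite[Lems.~2.8--2.10]{KM17} together with a separate $\SL_3(\epsilon r)$, $q=3$ case, but that is presentational rather than structural.)
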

\begin{proof}
First, note that if $H$ is of exceptional type, the statement follows from Lemma \ref{lem:exceptinitial}. We again have by Lemma \ref{lem:liedefining} that $r$ is a power of $r_0\not\in\{p,q\}$. 

Then we may assume $H$ is of classical type and may write $S:=H/\zent{H}$ so that $S=\PSL_n(\epsilon r)$ with $n\geq 2$ and $\epsilon\in\{\pm1\}$; $S=\PSp_{2n}(r)$ with $n\geq 2$, $S=\operatorname{P\Omega}_{2n+1}(r)$ with $n\geq 3$, or $S=\operatorname{P\Omega}_{2n}^\pm(r)$ with $n\geq 4$.  Let $(\bH^\ast, F)$ be dual to $(\bH, F)$ and write $H^\ast:=(\bH^\ast)^F$. Note that $[H^\ast, H^\ast]$ is also a simple group of Lie type by \cite[Prop.~24.21]{MT11}.

Let $s\in H^\ast$ with order a power of $p$.  We claim that a semisimple character $\chi_s\in\irr{H}$ corresponding to the $H^\ast$-class of $s$ lies in $B_p(H)$. Indeed, if $p\nmid |\zent{H}|$, then $p>2$ in the cases of the orthogonal and symplectic types, so we have $p$ is good for $H$,  $\cent{\bH^\ast}{s}$ is connected by \cite[Exer. 20.16]{MT11},  and $\chi_s\in\irr{B_p(H)}$ by \cite[Thm.~21.13]{CE04}. If instead $p\mid |\zent{H}|$, then either $p=2$ or $H=\SL_n(\epsilon r)$ and $p\mid (n, r-\epsilon)$. In either of these cases, $H$ has a unique unipotent block by  \cite[Thm.~21.14]{CE04} and \cite{FS82} and hence any semisimple character $\chi_s\in\irr{H}$ corresponding to the $H^\ast$-conjugacy class of $s$ lies in $B_p(H)$ by \cite[Thm.~9.12]{CE04}.

Now, if $s$ further lies in $[H^\ast, H^\ast]$, then $\chi_s$  is trivial on $\zent{H}$ by \cite[Lem.~4.4]{NT13}, and we can view $\chi_s\in\irr{B_p(G)}$ by deflation. (If $p\nmid |\zent{H}|= |H^\ast/ [H^\ast, H^\ast]|$, then this condition automatically holds.) In such a case,  since $\chi_s$ has degree $[H^\ast:\cent{H^\ast}{s}]_{r'}$, we see $s$ must centralize a Sylow $q$-subgroup of $H^\ast$.

By \cite[Thm. 2.2]{BFMMNSST}, we see that for any odd $p$, $[H^\ast, H^\ast]$ necessarily contains a $p$-element that does not centralize a Sylow $2$-subgroup of $[H^\ast, H^\ast]$, and hence does not centralize a Sylow $2$-subgroup of $H^\ast$. We  therefore see that $q$ is odd, giving (i).


\smallskip

Write $d_p:=d_p(r)$ and $d_q:=d_q(r)$. We will next show (ii), that $d_p=d_q$.   
Now, let $\wt{H}:=\GL_n( \epsilon r)$, $\Sp_{2n}(r)$, $\SO_{2n+1}(r)$, or $\SO_{2n}^\epsilon(r)$, respectively, in the cases $S=\PSL_n(\epsilon r)$, $\PSp_{2n}(r)$, $\operatorname{P\Omega}_{2n+1}(r)$or $\operatorname{P\Omega}_{2n}^\epsilon(r)$. Then $S$ can naturally be viewed as a normal subgroup of $\wt{H}/\zent{\wt{H}}$.

First let  $S=\PSL_n(\epsilon r)$ and write $e_q:=d_q(\epsilon r)$ and $e_p:=d_p(\epsilon r)$. We claim that $e_p\mid e_q$. If $e_p=1$, this is certainly true, so suppose $e_p\neq 1$, and hence $p\nmid (r-\epsilon)$. Then any semisimple $p$-element $\wt s\in [\wt{H}^\ast, \wt{H}^\ast]$ satisfies its image $s\in H^\ast$ has connected centralizer $\cent{\bH^\ast}{s}$ by \cite[Exer.~20.16]{MT11}, and therefore the corresponding semisimple character $\chi_{\wt{s}}\in\irr{B_p(\wt{H})}$ restricts irreducibly to $\irr{B_p(H)}$.
 Further, such a character is also trivial on $\zent{\wt{H}}$ by \cite[Lem.~4.4]{NT13}, so the restriction can be viewed in $\irr{B_p(G)}$. Then such a $\wt{s}$ must centralize a Sylow $q$-subgroup of $\wt{H}^\ast$, as otherwise $\chi_{\wt{s}}$ has degree divisible by $q$.
%
Then $\wt{s}$ centralizes a Sylow $d_q$-torus $\bbS_{d_q}$ of $\wt{H}^\ast$ by \cite[Thm. 5.9]{malle07}.

 Here $\cent{\wt H^\ast}{\bbS_{d_q}}\cong {T}\times \GL_m(\epsilon r)$ with $n=w e_q+m$,  $0\leq m<e_q$, and ${T}\cong\GL_1((\epsilon r)^{e_q})^w$ a torus centralizing  a Sylow $d_q$-torus of $\GL_{we_q}(\epsilon r)$ (see \cite[Ex.~3.5.14]{GM20}).
If  $p\mid |\GL_m(\epsilon r)|$ but $p\nmid |{T}|$, note that  $e_p\leq m<e_q$ and that any element of $p$-power order has at most $m$ nontrivial eigenvalues. But writing $m=w_1e_p+ m_1$ with $m_1<e_p$, we can find an element $s_1$ of $\GL_m(\epsilon r)$ with $w_1e_p$ nontrivial eigenvalues of $p$-power order. Since $2m<n$, we can embed the element $\mathrm{diag}(s_1, s_1^{-1}, I)$ into $\GL_n(\epsilon r)$, and this element has $2w_1e_p>m$ nontrivial eigenvalues, a contradiction. This forces $p\mid |{T}|$ after all, and hence $e_p\mid e_q$.

If $n\geq 5$, then by Lemmas \ref{lem:unipsTypeAp23} and \ref{lem:unipsclassical},  we have $e_p=e_q$, since unipotent characters are trivial on $\zent{\wt{H}}$ and restrict irreducibly to $S$. But this forces also $d_p=d_q$.

Now suppose $H=\SL_2( r)$. If $d_p\neq d_q$, then $d_p=1$ and $d_q=2$. Any $p$-element $s\in [H^\ast, H^\ast]$ yields a character $\chi_s\in\irr{B_p(G)}$ of degree $(r+1)/2$ or $(r+1)$, and hence  divisible by $q$, a contradiction.  

Next let $H=\SL_3(\epsilon r)$. If $e_q\neq 3$, we may again conclude using Lemma \ref{lem:unipsTypeAp23}. So assume $e_q=3$ and $e_p=1$. Here we see that  $B_p(H)$ is the unique unipotent $p$-block, and  that for any nontrivial $p$-element $s\in [H^\ast, H^\ast]$ with $|s|\mid (r-\epsilon)_p$, the corresponding semisimple character $\chi_s\in\irr{H}$ has degree divisible by $(r^2+\epsilon r+1)$. Then  any such character deflates to one in $B_p(G)$ with degree divisible by $q$, a contradiction.  

Now let $H=\SL_4(\epsilon r)$. Here we may assume by  Lemma \ref{lem:unipsTypeAp23} that $e_q=2$ and $e_p=1$. Again $B_p(H)$ is the unique unipotent $p$-block, and the image in $[H^\ast, H^\ast]$ of the $p$-element $\mathrm{diag}(\zeta, \zeta^{-1}, 1, 1)$ of $\wt{H}^\ast$, where $|\zeta|=(r-\epsilon)_p$ gives a semisimple character $\chi_s\in\irr{B_p(H)}$ trivial on the center and with degree divisible by $r+\epsilon$, and hence $q$, again giving a contradiction.



Now consider the remaining classical types. Since $q\neq 2$, again $B_p(\wt{H})$ does not contain a  character trivial on the center with degree divisible by $q$, as this would again yield a character of $B_p(G)$ with degree divisible by $q$. Here let $e_p:=d_p(r^2)$ and as before, let $\epsilon_p\in\{\pm1\}$ be such that $p\mid (r^{e_p}-\epsilon_p)$, and similar for $e_q, \epsilon_q$. Again write $n=we_q+m$ with $0\leq m<e_q$. The structure of $\cent{\wt H^\ast}{\bbS_{d_q}}$ is described in \cite[Ex.~3.5.15]{GM20}. 
We have $\cent{\wt H^\ast}{\bbS_{d_q}}\cong {T}\times H_m$ where $H_m$ is of classical type of rank $m$ and $|{T}|=(r^{e_q}-1)^w$ if $d_q=e_q$ is odd and $(r^{e_q}+1)^w$ if $d_q=2e_q$ is even. 
By similar arguments as above, we have $p\mid (r^{e_q}-1)$, resp. $p\mid (r^{e_q}+1)$ and this forces $e_p\mid e_q$, $\epsilon_p=\epsilon_q$, and $p\mid (r^{e_q}+1)$ when $\epsilon_p=-1$. Then we are in the situation of Lemma \ref{lem:unipsclassical}, and again $e_p=e_q$. Then $d_p=d_q$ since $d_p=e_p=e_q=d_q$ if $d_q$ is odd, and $d_p=2e_p=2e_q=d_q$ if $d_q$ is even. Hence we have shown (ii).

\smallskip

We next turn to (iii), the claim that $Q\in\Syl_q(G)$ is abelian. Assume even that $\hat Q\in\Syl_q(H)$ is nonabelian.  Then by \cite[Lems.~2.8--2.10]{KM17} and since $d_p=d_q$, we have $B_p(H)$ contains a unipotent character of degree divisible by $q$, unless $q=3$, $H=\SL_3(\epsilon r)$, and $e_p=e_3=1$. Then assume we are in the latter situation. Here again $B_p(H)$ is the unique unipotent $p$-block, and similar to the proof of (ii), we see that for any $p$-element $s\in [H^\ast, H^\ast]$, we have $\chi_s(1)$ is divisible by $3$. Hence any such character deflates to one in $B_p(G)$ with degree divisible by $3$, a contradiction, yielding that $\hat Q$, and thus $Q$, must be abelian as claimed.

\smallskip

The argument for (iv) is exactly as in Lemma \ref{lem:exceptinitial}. Let $\hat P\in\Syl_p(H)$ and $\hat Q\in\Syl_q(H)$. If $p$ is odd, then by \cite[Thm.~4.10.2]{GLS}, we have $\hat P$ is conjugate to a group of the form $P_T\rtimes P_W$, where $P_T$ lies in a Sylow $d$-torus of $H$ and $P_W$ normalizes that torus. But similarly, $\hat Q=Q_T$ lies in a Sylow $d$-torus. Then these may be chosen such that $[P_T, Q_T]= 1$ and $P_W$ normalizes $Q_T$. If $p=2$, then we still have $P_T\lhd \hat P$ and  $[P_T, Q_T]= 1$, and $\hat P/P_T$ still normalizes the Sylow $d$-torus. This yields (iv).

\smallskip

Finally, we prove (v).  Assume that no $P\in\Syl_p(G)$ and $Q\in\Syl_q(G)$ exist satisfying $[P,Q]=1$. From above, note that we may assume that  $\hat P\in\Syl_p(H)$  is nonabelian, as otherwise $\hat P=P_T$.

 Let $d:=d_p=d_q$ and $e:=e_p=e_q$ and write $n=ew+m$ as before, so that  $p\leq w<q$ since $\hat Q$ is abelian but $\hat P$ is not. Then there exists a $p$-element $ s\in {H}^\ast$  such that $|s|$ does not divide the exponent of $|\mathbb{T}|$, and hence cannot centralize a Sylow $q$-subgroup of ${H}^\ast$. Then the corresponding semisimple character $\chi_{{s}}$ of ${H}$ has degree divisible by $q$ and lies in $B_p({H})$.

If such an $s$ can be chosen in $[H^\ast, H^\ast]$, then $\chi_s$ is trivial on $\zent{H}$ and we obtain a contradiction. So, assume that this is not the case. Then $p\mid |H^\ast/[H^\ast, H^\ast]|= |\zent{H}|$. Then either $p=2$ or $p\mid (n,r-\epsilon)$ with $S=\PSL_n(\epsilon r)$. In the latter case when $p$ is odd, note that $e=1$, and assume that $2p\leq n$. 
Consider $\zeta\in\FF_{r^{2p}}^\times$ with order $(r^{p}-\epsilon)_p>(r-\epsilon)_p$. Then an element $\wt s\in\wt H^\ast$ with nontrivial eigenvalues $\{\zeta, \zeta^r, \zeta^{r^2}, \cdots, \zeta^{r^{p-1}}, \zeta^{-1}, \zeta^{-r},\cdots,\zeta^{-r^{p-1}}\}$ lies in $[\wt H^\ast, \wt H^\ast]$, giving a character $\chi_{\wt s}\in\irr{B_p(\wt{H})}$ trivial on $\zent{\wt H}$ with degree divisible by $q$, using the same considerations as above. Note that since $q>n$ here, we have $q\nmid [\wt{S}:S]$, where  $\wt{S}=\wt{H}/\zent{\wt{H}}$. Then the constituents of $\chi_{\wt{s}}$ on $H$ also have degree divisible by $q$, lie in $B_p(H)$, and are trivial on $\zent{H}$. This is a contradiction, so we must have $n=p$. This gives one of the listed groups in (v).

Now assume $p=2$. Since $H$ is a classical group, there is a unique unipotent $2$-block  by \cite[Thm.~21.14]{CE04}, so the characters indexed by any  $2$-element of $H^\ast$ lie in $B_2(H)$.
Let $\zeta\in\FF_{r^2}$ with $|\zeta|=(r^2-1)_2$. If $S=\PSL_n(\epsilon r)$, we may argue exactly as in the previous paragraph, now using  an element $\wt s\in\wt{H}^\ast$ with nontrivial eigenvalues $\{\zeta, \zeta^r,\zeta^{-1},\zeta^{-r}\}$, to see that $n=2$.  If $S=\PSp_4(r)$, $\PSp_6(r)$, or $\operatorname{P\Omega}_7(r)$,
then since $q\mid (r^2-1)$, we see explicitly that there is a unipotent character (which therefore lies in $B_2(S)$) of degree divisible by $q$.

Now suppose $n\geq 4$ and $S=\operatorname{P\Omega}_{2n}^\epsilon(r)$,  $\operatorname{P\Omega}_{2n+1}(r)$, or $\PSp_{2n}(r)$. Note that $e=1$.  We have $\GL_3(r)$ embeds naturally into $\wt{H}^\ast$, so that $\SL_3(r)$ embeds into $[\wt{H}^\ast, \wt{H}^\ast]$. An element of $\SL_3(r)$ with eigenvalues $\{\zeta, \zeta^r, \zeta^{-r-1}\}$ and $|\zeta|=(r^2-1)_2$ then yields a $2$-element $\wt s\in [\wt H^\ast, \wt H^\ast]$, giving a character $\chi_{\wt s}\in\irr{B_2(\wt{H})}$ trivial on $\zent{\wt{H}}$. Since $|\wt s|=(r^2-1)_2$ again does not divide the exponent of $|\mathbb{T}|$, it cannot centralize a Sylow $q$-subgroup.
Hence, we have obtained  (v), completing the proof.
\end{proof}

\subsection{Theorem \ref{thm:pnilphall} for Almost Simple Groups}

Here, we finally prove Theorem \ref{thm:pnilphall} for certain almost simple groups. For $S$ a simple group of Lie type defined in characteristic $r_0$, we have $\Aut(S)=\wt{S}\rtimes D$ for an appropriate group $D$ of graph-field automorphisms and $\wt{S}$ the group of inner-diagonal automorphisms of $S$. (See, e.g. \cite[Thms.~2.5.12 and 2.5.14]{GLS}.) 

We start with an elementary lemma  that will also be used in the proofs of Theorems \ref{thm:normalsylow} and \ref{thm:pnilphall} in the coming sections.

\begin{lem}
\label{step1}
Let $p\neq q$ be two  primes. Let $G$ be a finite group and assume that
$q$ does not divide $\chi(1)$ for every $\chi\in\Irr(B_p(G))$. If $N$ is subnormal in $G$, then $q$ does not divide $\theta(1)$ for every $\chi\in\Irr(B_p(N))$.
\end{lem}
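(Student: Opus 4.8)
The plan is to reduce to the case of a normal subgroup and then exploit the compatibility of the principal block with normal subgroups together with Clifford's theorem on character degrees. (Note the statement should read ``$q$ does not divide $\theta(1)$ for every $\theta\in\Irr(B_p(N))$''.)

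Since $N$ is subnormal in $G$, I would fix a subnormal chain $N=N_0\trianglelefteq N_1\trianglelefteq\cdots\trianglelefteq N_k=G$ and argue by descending induction along this chain, so that it suffices to treat a single normal step. Concretely, it is enough to prove: if $M\trianglelefteq L$ and $q\nmid\psi(1)$ for every $\psi\in\Irr(B_p(L))$, then $q\nmid\theta(1)$ for every $\theta\in\Irr(B_p(M))$. Applying this to each link $N_i\trianglelefteq N_{i+1}$, starting from the hypothesis at $L=G$, then propagates the conclusion down the chain to $M=N$.

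So assume $N\trianglelefteq G$ and fix $\theta\in\Irr(B_p(N))$. The two facts I would use are: (a) the principal block $B_p(G)$ covers the principal block $B_p(N)$, which is immediate from the definition of covering since $1_G\in\Irr(B_p(G))$ lies over $1_N\in\Irr(B_p(N))$; and (b) $B_p(N)$ is $G$-invariant, because the trivial character $1_N$ is $G$-invariant and the principal block is the block containing $1_N$. With (a) and (b) in hand, I would invoke the theory of covering blocks to conclude that every $\theta\in\Irr(B_p(N))$ lies under some $\chi\in\Irr(B_p(G))$. Then Clifford's theorem gives $\chi_N=e\sum_i\theta_i$, where the $\theta_i$ are the $G$-conjugates of $\theta$ (all of degree $\theta(1)$), so that $\theta(1)\mid\chi(1)$; since $q\nmid\chi(1)$ by hypothesis, we obtain $q\nmid\theta(1)$, as desired.

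The step I expect to be the main obstacle is the precise covering statement, namely that \emph{every} irreducible character of the covered block $B_p(N)$ lies beneath some irreducible character of the covering block $B_p(G)$, not merely that some character does. I would pin this down using the standard results of block Clifford theory: the $G$-invariance of $b=B_p(N)$ recorded in (b) is exactly what makes the block idempotent $e_b$ central in $\mathcal{O}G$, so that $e_b$ is a sum of block idempotents of $G$ with $B_p(G)$ among them by (a); this is what guarantees that the $B_p(G)$-component of $\theta^G$ is nonzero for every $\theta\in\Irr(b)$. Everything else, namely the divisibility $\theta(1)\mid\chi(1)$ and the inductive passage along the subnormal chain, is routine.
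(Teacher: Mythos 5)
Your proof is correct and takes essentially the same route as the paper: reduce to a single normal step by induction along the subnormal chain, use the fact that $B_p(G)$ covers $B_p(N)$ together with the covering theorem guaranteeing that every $\theta\in\Irr(B_p(N))$ lies under some $\chi\in\Irr(B_p(G))$, and finish with Clifford's theorem. The paper does exactly this, citing \cite[Thm.~9.4]{nav} for the covering step. One caveat about your final paragraph: the justification sketched there is not sufficient on its own. The centrality of $e_b$ in $\mathcal{O}G$ and the decomposition $e_b=\sum_{B\in\mathcal{S}}e_B$ with $B_p(G)\in\mathcal{S}$ only show that every irreducible constituent of $\theta^G$ lies in \emph{some} block belonging to $\mathcal{S}$; they do not show that the $B_p(G)$-component of $\theta^G$ is nonzero for \emph{every} $\theta\in\Irr(b)$. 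A priori, different characters of $b$ could induce into different subsets of $\mathcal{S}$, and this cannot be repaired by passing to $G$-conjugates (which would suffice for the degree statement), since $\Irr(b)$ is in general not a single $G$-orbit. That nonvanishing for each covering block is precisely the nontrivial content of \cite[Thm.~9.4]{nav}, whose proof links the characters of $b$ through Brauer characters and decomposition numbers rather than through the idempotent decomposition alone. So the right move is simply to invoke that theorem, as the paper does, rather than attempt to re-derive it from the centrality of $e_b$.
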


\begin{proof}
Suppose first that $N$ is normal in $G$.
Let $\theta\in\Irr(B_p(N))$. Since $B_p(N)$ is covered by $B_p(G)$, \cite[Thm.~9.4]{nav} implies that there exists $\chi\in\Irr(G|\theta)$ in the principal $p$-block. Therefore $\chi$ has $q'$-degree and hence the same holds for $\theta$. The general case follows by induction on the subnormal length of $N$ in $G$.
\end{proof}

\begin{thm}
\label{lie}
Let $p$ and $q$ be two different primes.
Let $G$ be an almost simple group with socle $S$ a simple group of Lie type. Suppose that $G/S$ is a $q$-group and that $B_p(G)$ contains no irreducible character of degree divisible by $q$. Then there is a Sylow $p$-subgroup $\wt P$ of $G$ and a Sylow $q$-subgroup $\wt Q$ of $G$ such that $\wt P$ normalizes $ \wt Q$.
\end{thm}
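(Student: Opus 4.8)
The plan is to reduce the almost simple group $G$ to the quasisimple data already analyzed in Proposition \ref{prop:Lieinitial} and Lemmas \ref{lem:exceptinitial} and \ref{lem:suzree}, and then to bootstrap the normalizer statement for $S$ up to $G$. First I would invoke Lemma \ref{step1}: since $S\trianglelefteq G$ and $B_p(G)$ contains no character of degree divisible by $q$, the same holds for $B_p(S)$. Applying Proposition \ref{prop:Lieinitial} to $S$ (taking $H=\bH^F$ the simply connected cover and $Z=\zent{H}$, so $G$'s socle $S=H/\zent{H}$), I obtain that neither $p$ nor $q$ divides $r$, that $q$ is odd, that $d_p(r)=d_q(r)=:d$, that a Sylow $q$-subgroup $Q_S$ of $S$ is abelian, and that there is a Sylow $p$-subgroup $P_S$ of $S$ normalizing $Q_S$ (plus the refined centralizing information in part (v)). The Suzuki and Ree cases are handled identically using Lemma \ref{lem:suzree}.

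The core difficulty is passing from a Sylow $p$-subgroup of $S$ normalizing a Sylow $q$-subgroup of $S$ to the analogous statement in $G$, where $G/S$ is a $q$-group. The key structural input I would use is $\Aut(S)=\wt S\rtimes D$ with $\wt S$ the inner-diagonal automorphisms and $D$ the graph-field automorphisms, together with the torus-theoretic description underlying (iv). Since $G/S$ is a $q$-group and $q\neq r_0$, the extra automorphisms are semisimple of order prime to $r_0$, so $G\leq \wt S$ in the relevant cases because field and graph automorphisms have order divisible by $r_0$ or by primes dividing the twisting — more carefully, I would argue that any nontrivial $q$-power quotient $G/S$ must be realized by diagonal automorphisms (so $q\mid[\wt S:S]$ and $G/S$ embeds in $\wt S/S$), since graph-field automorphisms of $q$-power order would force $q\mid|D|$, a condition one checks occurs only in controlled situations. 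This means $\wt Q$ can be taken inside $\wt S$, and the Sylow $d$-torus machinery of \cite[Thm.~4.10.2]{GLS} applies to $\wt S$ just as to $S$.

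With $G\leq\wt S$ (or more precisely $G$ differing from $\wt S\cdot S$-data by a $q$-group), I would repeat the argument for (iv) at the level of $\wt S$ rather than $S$: a Sylow $q$-subgroup $\wt Q$ of $G$ lies in a Sylow $d$-torus $\mathbb{S}_d$, and a Sylow $p$-subgroup $\wt P$ of $G$ is conjugate to $P_T\rtimes P_W$ where $P_T$ lies in $\mathbb{S}_d$ and $P_W$ normalizes it; choosing tori compatibly gives $[P_T,\wt Q]=1$ and $P_W$ normalizing $\wt Q$, hence $\wt P$ normalizes $\wt Q$. The main technical point to verify is that the Sylow $q$-subgroup of $G$ is still abelian and still torus-contained once we allow the diagonal automorphisms contributing to $G/S$; here I would use that $Q_S$ is abelian from Proposition \ref{prop:Lieinitial}(iii) and that extending by a $q$-group of diagonal automorphisms keeps the Sylow $q$-subgroup inside the normalizer of the same $d$-torus, since diagonal automorphisms act on the maximal torus structure.

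The step I expect to be the genuine obstacle is controlling the graph-field automorphisms: ruling out that $G/S$ being a $q$-group forces us outside $\wt S$, and ensuring that even a diagonal $q$-extension does not destroy abelianity of $\wt Q$ or the normalizing relation. Since $q$ is odd and $q\neq r_0$, a graph-field automorphism of order $q$ would require $q$ to divide the order of the group of such automorphisms, which for most types is a power of $r_0$ times small data coprime to the relevant $q$; I would treat the few types where $q$ can divide $|D|$ (triality for $\type{D}_4$ with $q=3$, and field automorphisms when $q\mid f$) separately, checking in those cases directly via the cyclotomic/torus structure that a Sylow $q$-subgroup of the extended group remains abelian and torus-normalized. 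Once these exceptional extension patterns are dispatched, the torus argument delivers $\wt P$ normalizing $\wt Q$ uniformly, completing the proof.
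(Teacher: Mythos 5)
There is a genuine gap, and it sits exactly where you placed the weight of your argument. Your central structural claim --- that a nontrivial $q$-power quotient $G/S$ must be realized by diagonal automorphisms, so that $G\leq\wt{S}$ and the torus machinery of \cite[Thm.~4.10.2]{GLS} applies verbatim --- is backwards. Under the hypotheses, the paper shows (via \cite[Lem.~3.1, Prop.~3.5]{mn} and \cite[Lem.~2.1, Prop.~2.2]{malle14}, and the analysis in Lemma \ref{lem:exceptinitial}, Proposition \ref{prop:Lieinitial}, and Theorem \ref{as}) that $q\nmid[\wt{S}:S]$ in every surviving case, so diagonal automorphisms can contribute \emph{nothing} to a $q$-group $G/S$: the quotient $G/S$ is necessarily induced by field automorphisms. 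Your supporting assertion that field and graph automorphisms "have order divisible by $r_0$ or by primes dividing the twisting" is simply false: the field automorphism group is cyclic of order $f$ where $r=r_0^f$, and $q\mid f$ is completely unrestricted (e.g.\ $\PSL_2(r_0^q)$ admits a field automorphism of order $q$ for any prime $q$). So the case you defer to the end as an exceptional pattern to be "dispatched" --- $G=S\langle\varphi\rangle$ with $\varphi$ a field automorphism of $q$-power order --- is in fact the only case, and the entire content of the theorem.

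For that case you give no argument, and the missing idea is not cosmetic. After Lemma \ref{step1} you never use the hypothesis on $B_p(G)$ again, only its consequence for $B_p(S)$; but knowing that a Sylow $p$-subgroup $P$ of $S$ normalizes a Sylow $q$-subgroup $Q$ of $S$ does not let you conclude that any Sylow $p$-subgroup of $G$ normalizes $Q\langle\varphi\rangle$ --- for that you need $P$ to be chosen $\varphi$-invariant, which is not automatic. The paper gets this precisely from the $G$-level hypothesis: if a semisimple character $\chi_s\in\irr{B_p(S)}$ attached to a $p$-element $s$ were not $\varphi$-stable, Clifford theory would produce a character of $B_p(G)$ above it of degree divisible by $q$. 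Stability of the class of $s$ (plus the count of conjugates being prime to $q$) forces a $\varphi$-fixed representative, hence $p\mid(r_0^{dm}-1)$, and comparison of order polynomials then shows a Sylow $p$-subgroup of $S$ already lies in the $\varphi$-fixed subfield subgroup, so $[\hat P,\varphi]=1$; combined with $\varphi$-stability of the Sylow $d$-torus (\cite[Prop.~5.13]{malle07}) this yields $P$ normalizing (indeed centralizing, in case (I)) $\wt{Q}=Q\langle\varphi\rangle$. Also note that in this situation $\wt{Q}$ need not be abelian or torus-contained, so your plan to re-run Proposition \ref{prop:Lieinitial}(iii)--(iv) "at the level of $\wt{S}$" would not establish what is needed even if the group-theoretic reduction were repaired.
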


\begin{proof} 
Note that we may assume $p\mid |S|$, as otherwise a Sylow $p$-subgroup of $G$ is trivial and the result holds. Assume that no irreducible characters in the principal $p$-block $B_p(G)$ have degree divisible by $q$.  Then every $\chi\in\irr{B_p(S)}$ also satisfies $q\nmid\chi(1)$ using Lemma \ref{step1}. Hence by Lemmas \ref{sporadic} and \ref{lem:liedefining}, we may assume
that $S\neq \tw{2}\type{F}_4(2)'$ and $p$ and $q$ are nondefining primes for $S$, with the possibility that $q\nmid |S|$. Let $r_0$ be the defining prime for $S$.

(I) First, assume that there is a Sylow $p$-subgroup $P$ of $S$ and Sylow $q$-subgroup $Q$ of $S$ such that $[P,Q]=1$. (Note that this is trivially true if $q\nmid |S|$.)
We claim that in this case, there exist Sylow $p$- and Sylow $q$- subgroups $\wt P$ and $\wt Q$ of $G$ such that $[\wt P, \wt Q]=1$. (Note that this follows from \cite[Thm. 1.2]{lwwz}, as   by \cite[Thm. 3.6]{mn}, this forces that $p\nmid \chi(1)$ for every $\chi\in\irr{B_q(S)}$ as well. Then since $p\nmid |G/S|$, we have further $p\nmid \chi(1)$ for every $\chi\in\irr{B_q(G)}$. However, we aim to provide a proof independent of the main result of \cite{lwwz}.)

We have $S=H/\zent{H}$ for $H=\bH^F$ with $\bH$ a simple, simply connected algebraic group defined in characteristic $r_0$ and $F\colon \bH\rightarrow\bH$ a Steinberg endomorphism.
Let $r=r_0^f$ such that $H$ is defined over $\FF_r$ in the case $F$ is Frobenius, or let $r^2=r_0^{2n+1}$ in the case that $H$ is a Suzuki or Ree group.

First suppose that $q\mid |S|$.
By \cite[Lem.~3.1 and Prop.~3.5]{mn}, combined with \cite[Lem.~2.1 and Prop.~2.2]{malle14}, we have
 $d_p(r)=d_q(r)=:d$ (resp.  $d_p(r^2)=d_q(r^2)=:d$); $p$ and $q$ are odd, good for $\bH$,  larger than $3$ if $H$ is of type $\tw{3}\type{D}_4$, and  do not divide $|\zent{\bH}^F:(\zent{\bH}^\circ)^F|\cdot |\zent{\bH^\ast}^F:(\zent{\bH^\ast}^\circ)^F|$, where $(\bH^\ast, F)$ is dual to $(\bH, F)$;  and there exist $\hat P\in\Syl_p(H)$ and $\hat Q\in\Syl_q(H)$ that are abelian with $\hat P\hat Q\leq \mathbb{S}_d$ for an ($F$-stable) Sylow $d$-torus $\mathbb{S}_d$ of $\bH$.

With these conditions, we also see that  $G/S$ is induced by field automorphisms. Similarly, if $q\nmid |S|$, we see that the only option is that $G/S$ is induced by field automorphisms. Say $G=S\langle\varphi\rangle$ for some field automorphism $\varphi$ of order a power of $q$.  Namely, we may let $\varphi\in\langle F_{r_0}^m\rangle$, where the group of field automorphisms has order $q^a\cdot m$ with $q\nmid m$ and $F_{r_0}$ denotes a generating field automorphism induced by a standard Frobenius. 

Let $s\in [H^\ast, H^\ast]$ be a $p$-element. As in the proofs of Lemma \ref{lem:exceptinitial} and Proposition \ref{prop:Lieinitial}, we obtain that the corresponding semisimple character $\chi_s$ lies in $\Irr(B_p(H))$, with its deflation lying in $B_p(S)$, and $s$ must centralize a Sylow $q$-subgroup of $H^\ast$. Then $q$ does not divide $[H^\ast: \cent{H^\ast}{s}]$. Further, $\chi_s$  is stable under $\varphi$, as otherwise $B_p(G)$ would contain some character lying above $\chi_s$ with degree divisible by $q$.

This means that the $H^\ast$-conjugacy class of $s$ is $\varphi$-stable. Since the number of $H^\ast$-conjugates of $s$ is prime to $q$, we see  there is even some $H^\ast$-conjugate that is invariant under $\varphi$. 
Without loss, we may assume that this is $s$ itself. That is, we have $s\in(\bH^\ast)^\varphi$, and therefore we see $p\mid (r_0^{dm}-1)$. 
By comparing the order polynomials, we see this means $\hat P$ can be chosen to be a Sylow $p$-subgroup of the corresponding group $H(r_0^m)$ of the same type defined over $\FF_{r_0^m}$. Then $\hat P$ is centralized by $\varphi$. If $q\nmid |S|$, this proves the claim. Otherwise, by \cite[Prop.~5.13]{malle07}, it follows that $\mathbb{S}_d$ (and hence $\hat Q$) is $\varphi$-stable. Then we can choose $P$ and $Q$ such that  $[P, Q\langle\varphi\rangle]=1$, again completing the claim.

(II) We now may assume that no such $P$ and $Q$ exist satisfying $[P,Q]=1$. Then by Lemma \ref{lem:suzree} and Proposition \ref{prop:Lieinitial}, we still have $q$ is odd, $d_p(r)=d_q(r)$ (resp. $d_p(r^2)=d_q(r^2)$), and there is a Sylow $p$-subgroup $P$ of $G$ (and hence of $G$) that normalizes an abelian Sylow $q$-subgroup $Q$ of $S$. Further,  either $S=\PSL_p(\epsilon r)$ with $p\mid(r-\epsilon)$; $p=2$ and  $S=\tw{2}\type{G}_2(r^2)$; $S=\type{E}_6^\epsilon(r)$, $p=2$,  and $4$ and $q$ divide $(r+\epsilon)$; or $p=3$ and $S=\tw{3}\type{D}_4(r)$.    
%
%
In each case, note that $q>p$ and the only $q$-elements of $\Out(S)$ are field automorphisms. So, up to conjugation by inner automorphisms, we may again assume  $G\leq S\langle\varphi\rangle$ with $\varphi$ a field automorphism of $q$-power order. 
Let  $r=r_1^{q^a}$ (resp. $r^2=r_1^{q^a}$ in the case of the Ree groups) for some $a\geq 1$ and some power $r_1$ of $r_0$. Note that in each case, we can again choose $P$ to already be a Sylow $p$-subgroup of $\PSL_p(\epsilon r_1)$, resp. $\tw{2}\type{G}_2(r_1)$, $\type{E}_6^\epsilon(r_1)$, $\tw{3}\type{D}_4(r_1)$ for any such $r_1$. In particular, $P$ is centralized by $\varphi$. Then since $P$ normalizes $Q$, we see $P$ normalizes the Sylow $q$-subgroup $\wt{Q}=Q\langle\varphi\rangle$ of $G$ as well.
\end{proof}


\section{More on simple groups}

 We will also need the following  results on simple groups.

\begin{thm}
\label{as}
Let $G$ be an almost simple group with socle $S$. Let  $p$ be a prime divisor of $|S|$  and let  $q\neq p$ be another prime. Suppose that $G/S$ is a $p'$-group and that $B_p(G)$ contains no irreducible character of degree divisible by $q$. Then either $G/S$ has a normal Sylow $q$-subgroup or $S=\PSL_n(\epsilon r)$ with Sylow $p$ and $q$-subgroups of $S$ such that $[P,Q]=1$ and $p, q\nmid (r-\epsilon)$.

In the latter case, let $T:=\wt{S}\cap G$, and otherwise let $T:=S$. Then $S\leq T\lhd G$ satisfies  $|T/S|$ is coprime to $pq$  
and $G/T$ has a normal abelian Sylow $q$-subgroup.
\end{thm}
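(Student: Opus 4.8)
The plan is to reduce the hypothesis to the socle $S$, determine $S$ from the classification, and then read off the needed structure from the outer automorphism group $\Out(S)$, isolating type $A$ as the only genuine obstruction to a normal Sylow $q$-subgroup of $G/S$. First I would pass to $S$: since $S\lhd G$, Lemma \ref{step1} gives that $B_p(S)$ contains no character of degree divisible by $q$. By Theorem \ref{alt} the socle is not alternating, and by Theorem \ref{sporadic} it is neither sporadic nor the Tits group (the finitely many exceptions there are checked directly, using that $G/S$ is a $p'$-group). By Lemma \ref{lem:liedefining} we may thus take $S$ of Lie type with defining characteristic $r_0\notin\{p,q\}$, say $S=H/\zent{H}$ with $H$ simply connected. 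Proposition \ref{prop:Lieinitial}, together with Lemma \ref{lem:suzree} and Lemma \ref{lem:exceptinitial}, supplies the structural package: $q$ is odd, $d_p(r)=d_q(r)$, a Sylow $q$-subgroup $Q$ of $S$ is abelian, and some Sylow $p$-subgroup $P$ of $S$ normalizes $Q$; moreover $[P,Q]=1$ in $S$ except when $S=\PSL_p(\epsilon r)$ with $p\mid(r-\epsilon)$ or $(H,d_q(r),p)$ is one of the four exceptional triples of Lemma \ref{lem:exceptinitial}(v).

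Next I would use the structure $\Out(S)=(\wt S/S)\rtimes(\Phi\times\Gamma)$ with $\Phi$ the cyclic group of field automorphisms and $\Gamma$ the (small) graph group, and study the action of a field automorphism of $q$-power order on the outer diagonal group $\wt S/S$. In the cases with $[P,Q]\neq 1$, the obstruction lives in $p$-elements and $\wt S/S$ is a $p$-group (type A: of order $\gcd(p,r-\epsilon)$) or a $\{2,3\}$-group (the exceptional groups); since $G/S$ is a $p'$-group the relevant diagonal automorphisms are absent, leaving only field (and, for $\tw3\type{D}_4$, cyclic field-graph) automorphisms of $q$-power order, whence a Sylow $q$-subgroup of $G/S$ lies in the cyclic field group and is normal and abelian. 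Outside type $A$ with $[P,Q]=1$ the same conclusion holds: for $q\geq 5$ the group $\wt S/S$ is a $\{2,3\}$-group with $\Aut(\wt S/S)$ of order prime to $q$, so the field $q$-part centralizes $\wt S/S$ and $q\nmid|\Gamma|$, again placing a Sylow $q$-subgroup of $G/S$ inside the cyclic field group. All of these fall into the first alternative, with $T=S$ and $G/S=G/T$ carrying a normal abelian Sylow $q$-subgroup.

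The genuinely exceptional type is $A$. For $S=\PSL_n(\epsilon r)$ a field automorphism of $q$-power order acts on $\wt S/S\cong C_{\gcd(n,r-\epsilon)}$ by an $r_0$-power and thus need not commute with a nontrivial diagonal part of $G/S$, so a Sylow $q$-subgroup of $G/S$ may fail to be normal. Setting $T:=\wt S\cap G$ removes the diagonal contribution: since the inner-diagonal group $\wt S$ is normal in $\Aut(S)$ we have $T\lhd G$, and $|T/S|$ divides $\gcd(n,r-\epsilon)\mid(r-\epsilon)$, which is coprime to $pq$ because $p,q\nmid(r-\epsilon)$. Then $G/T$ embeds in $\Phi\times\Gamma$ with $\Gamma$ of order at most $2$, coprime to the odd prime $q$, so a Sylow $q$-subgroup of $G/T$ lies in the cyclic field group $\Phi$ and is normal and abelian. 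This is exactly the second alternative.

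The main obstacle I anticipate is the uniform bookkeeping at $q=3$: one must check across all Lie types that $3$-power-order field and triality automorphisms act trivially on $\wt S/S$, or else that the hypothesis is already violated (for instance, for $\type{E}_6^{\epsilon}(r)$ with $3\mid(r-\epsilon)$ the $3$-divisible unipotent characters produced in Lemma \ref{lem:exceptinitial} lie in $B_p(S)$, excluding that case), and to treat the $\type{D}_4$ and $\tw3\type{D}_4$ triality separately. Confirming that type $A$ is the only source of a genuine failure of normality of a Sylow $q$-subgroup of $G/S$ is the crux.
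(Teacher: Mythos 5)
Your strategy is essentially the paper's: reduce to the socle via Lemma \ref{step1}, invoke the structural package of Proposition \ref{prop:Lieinitial} (with Lemmas \ref{lem:exceptinitial} and \ref{lem:suzree}), place a Sylow $q$-subgroup of $\Out(S)$ inside the cyclic group of field automorphisms, and isolate type $\type{A}$ via $T=\wt{S}\cap G$. However, there are two genuine gaps, and both are closed in the paper by the one structural fact you quote but never exploit, namely the abelianness of $Q\in\Syl_q(S)$ from Proposition \ref{prop:Lieinitial}(iii). The first gap is that your type-$\type{A}$ case analysis is not exhaustive: the case $S=\PSL_n(\epsilon r)$ with $[P,Q]=1$ and $q\mid(r-\epsilon)$ is covered neither by your second paragraph (which treats only non-type-$\type{A}$ groups when $[P,Q]=1$) nor by your third (which assumes $p,q\nmid(r-\epsilon)$). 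In that case the theorem asserts the \emph{first} alternative, since the second disjunct explicitly requires $p,q\nmid(r-\epsilon)$, so defaulting to $T=\wt{S}\cap G$ proves a different (weaker) statement. The paper settles it as follows: $Q$ abelian together with $q\mid(r-\epsilon)$ forces $q>n$, hence $|\Aut(\wt{S}/S)|\leq\phi(\gcd(n,r-\epsilon))<n<q$, so the field $q$-part centralizes $\wt{S}/S$ and $G/S$ has a normal Sylow $q$-subgroup; one also needs $e_p=e_q$ (established inside the proof of Proposition \ref{prop:Lieinitial}(ii)) to see that $q\nmid(r-\epsilon)$ entails $p\nmid(r-\epsilon)$, so that the residual type-$\type{A}$ case is exactly the one in the statement.

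The second gap is the $q=3$ bookkeeping you yourself flag as unresolved, and your proposed remedy cannot work as stated: for $\bH=\type{D}_n$ with $n$ even, $\wt{S}/S$ is Klein four with automorphism group of order $6$, and for $\type{D}_4$ the triality automorphisms act \emph{nontrivially} on it, so one cannot hope to show that $3$-power order field or graph automorphisms act trivially on $\wt{S}/S$. The correct resolution is again Proposition \ref{prop:Lieinitial}(iii): for $r_0\neq 3$ the Sylow $3$-subgroups of $\operatorname{P\Omega}_{2n}^{\pm}(r)$, $n\geq 4$, are nonabelian, so $q\neq 3$ in precisely the configurations where $3$ divides $|\Aut(\wt{S}/S)|$ or the order of a graph automorphism; this is how the paper kills both the Klein-four action and the $\type{D}_4$ triality. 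Three smaller points: your claim that in the exceptional cases of Lemma \ref{lem:exceptinitial}(v) with $p=2$ and $S=\type{E}_6^{\epsilon}(r)$ the diagonal automorphisms are ``absent because $G/S$ is a $p'$-group'' is false ($\wt{S}/S$ is then a $3$-group, not a $2$-group); the case survives only because $q=3$ forces $\wt{S}/S=1$ while $q\geq 5$ is coprime to $|\Aut(\CCC_3)|=2$. Also, Lemma \ref{lem:liedefining} yields $r_0\notin\{p,q\}$ only when $q$ divides $|S|$, so the branch $q\nmid|S|$ (where $r_0=p$ is possible and Proposition \ref{prop:Lieinitial} does not apply) needs a separate, if easy, argument. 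Finally, the paper dispatches alternating, sporadic and Tits socles more cheaply than you do, simply by noting that $\Out(S)$ is abelian for them.
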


\begin{proof}
First, note that if $\Out(S)$ is abelian, then $G/S$ has a normal Sylow $q$-subgroup. So, we may assume that $S$ is not an alternating, sporadic, or Tits group. {The groups with exceptional Schur multipliers can further be checked in GAP \cite{gap}.}

Suppose for the rest of the proof that $S$ is a simple group of Lie type defined in characteristic $r_0$ with a nonexceptional Schur multiplier. Further, since $\Out(S)$ is cyclic for the Suzuki, Ree, and triality groups, we assume that $S$ is not one of these. As in the proof of Theorem \ref{lie}, we may assume by Lemma \ref{lem:liedefining}
that either $q\nmid |S|$ or that $r_0\not\in\{p,q\}$. We may find a quasi-simple group of Lie type $H$ such that  $S=H/\zent{H}$ and $H=\bH^F$, where $\bH$ is a simple algebraic group of simply connected type over $\overline{\mathbb{F}}_{r_0}$ and $F\colon \bH\rightarrow\bH$ is a Frobenius endomorphism endowing $H$ with an $\FF_{r}$-rational structure, where $r=r_0^f$ for some positive integer $f$.
Further, as there we obtain that $d_p=d_q$ if $q\mid |H|$ and either there exists $P\in\Syl_p(S)$ and $Q\in\Syl_q(S)$ such that $[P,Q]=1$, or $S=\PSL_p(\epsilon r)$ with $p, q\mid(r-\epsilon)$, or $S=\type{E}_6^\epsilon (r)$ with $p=2$ and $4,q\mid (r+\epsilon)$. In the latter cases, recall that there is $P\in\Syl_p(S)$ and $Q\in\Syl_q(S)$ with  $P$ normalizing $Q$.

  Let $\hat P$ and $\hat Q$ be Sylow $p$- and $q$-subgroups of $H$ such that $P=\hat P\zent{H}/\zent{H}$ and $Q=\hat Q\zent{H}/\zent{H}$. Now, recall $\Out(S)$ can be realized as $\wt{S}/S\rtimes D$, where $\wt{S}$ is the group of inner-diagonal automorphisms of $S$ and $D$ is an appropriate group of graph-field automorphisms. If $[P,Q]=1$ and   $q\mid |H|$, then using \cite[Lem. 3.1 and Prop. 3.5]{mn}, we have $[\hat P, \hat Q]=1$; $p$ and $q$ are odd;  $\hat P$ and $\hat Q$ are abelian; and  we have further from \cite[Prop. 2.2 and Lem. 2.1]{malle14}, that $p$ and $q$ are good for $\bH$ and do not divide $|\zent{H}|$.
The condition $q\nmid|H|$ or the above conditions on $q$ when $q\mid |H|$ yield that in either case, $q\nmid |\wt{S}/S|$. In the  cases when no such $P$ and $Q$ exist, we also see by our conditions on $p$ and $q$ that $q\nmid |\wt{S}/S|$.

Further, in either situation, $q$ does not divide the order of a graph automorphism in $D$ unless possibly if $\bH=\type{D}_4$ and $q=3$, in which case $\hat Q\neq 1$ is not abelian. Hence we see a Sylow $q$-subgroup $X$ of $\Out(S)$ is cyclic with order dividing $f$ and can be chosen to be generated by a field automorphism. Since the graph automorphisms in $D$ commute with the field automorphisms, we wish to show that $X$ is also normalized by $\wt{S}/S$.

Assume that $\bH$ is not of type $\type{A}_{n-1}$. Then $\wt{S}/S$ is either cyclic of size at most 3 or is Klein four with $\bH$ of type $\type{D}_n$, $n\geq 4$. In the latter case, we again note that $q\neq 3$ as above. Then in these cases, $q$ does not divide the order of the automorphism group of $\wt{S}/S$, and hence $X$ acts trivially on $\wt{S}/S$.

Finally, we consider the case that $\bH=\type{A}_{n-1}$, so that $H=\SL_n(\epsilon r)$ with $n\geq 2$ and $\epsilon\in\{\pm1\}$.  First, assume that $q\mid |H|$ and $q\mid(r-\epsilon)$. Then since
$Q$ is abelian, this forces $q>n$. If $q\nmid|H|$, we also have $q>n$.
 In particular, $|\aut{\wt{S}/S}|<n$ must be relatively prime to $q$. Then again we see $X$ must act trivially on $\wt{S}/S$. 

 Hence, we may assume that $q\mid |H|$ and $q\nmid(r-\epsilon)$. If $n=p=2$, then again $X$ acts trivially on $\wt{S}/S$. Then since $d_p(r)=d_q(r)$, we also have $p\nmid (r-\epsilon)$. Further, note that we are in the case $[P,Q]=1$.
 Here let $T:=\wt{S}\cap G\lhd G$. Then $|T/S|$ is  prime to both $p$ and $q$ and $G/T$ is abelian. The statement then follows.
%
\end{proof}

As a corollary to our work so far and that of \cite{mn}, we obtain Brauer's Height Zero Conjecture for two primes (\cite[Conj.~A]{mn}) for almost simple groups. (Note that this of course would  follow from \cite[Thm.~1.2]{lwwz}, but our goal is to provide an alternate proof in Theorem \ref{thm:2pBHZ} below.)
\begin{cor}\label{cor:2pBHZalmostsiple}
Let $A$ be an almost simple group $S\leq A\leq\Aut(S)$ with $S$ a nonabelian simple group. Suppose that $p\neq q$ are primes dividing $|A|$ such that $B_p(A)$ contains no character of degree divisible by $q$ and $B_q(A)$ contains no character of degree divisible by $p$. Then there are $\wt P\in\Syl_p(A)$ and $\wt Q\in\Syl_q(A)$ such that $[\wt P, \wt Q]=1$.
\end{cor}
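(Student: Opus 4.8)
The plan is to descend the hypothesis to the socle, classify $S$ using the simple‑group results already established, and then lift commuting Sylow subgroups back up to $A$.

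First, since $S\trianglelefteq A$, Lemma~\ref{step1} applied to each of the two hypotheses gives that $B_p(S)$ contains no character of degree divisible by $q$ \emph{and} $B_q(S)$ contains no character of degree divisible by $p$. By Theorem~\ref{alt}, $S$ is not alternating, and by Lemma~\ref{lem:liedefining} neither $p$ nor $q$ is the defining characteristic once $S$ is of Lie type. If $S$ is sporadic or the Tits group, Theorem~\ref{sporadic} forces both $(S,p,q)$ and $(S,q,p)$ to lie in its exceptional list; inspecting that list, the only symmetric possibilities are $(J_1,\{3,5\})$ and $(J_4,\{5,7\})$. In these two cases $\Out(S)=1$, so $A=S$, and one checks directly in GAP \cite{gap} (for instance via an element of order $pq$) that $S$ has commuting Sylow $p$‑ and $q$‑subgroups.

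So assume $S$ is of Lie type in non‑defining characteristic. I would apply Proposition~\ref{prop:Lieinitial} (together with Lemma~\ref{lem:suzree} in the Suzuki/Ree cases) to \emph{both} hypotheses. From the first, part~(i) gives that $q$ is odd; from the second, applied with the roles of $p$ and $q$ interchanged, that $p$ is odd. Suppose for contradiction that no commuting Sylow subgroups exist at the level of $S$. Then part~(v) of the first application places $S$ in one of the exceptional families: $\PSL_p(\epsilon r)$ with $p\mid(r-\epsilon)$, one of the $\type{E}_6^{\pm}(r)/\tw{3}\type{D}_4(r)$ cases of Lemma~\ref{lem:exceptinitial}(v), or the Suzuki/Ree case of Lemma~\ref{lem:suzree}. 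The $\type{E}_6^{\pm}$ and Suzuki/Ree cases require the distinguished prime to be $2$, contradicting that $p$ is odd, while the $\tw{3}\type{D}_4$ cases require $p=3$. But the second application sees the \emph{same} group $S$, and must likewise land in an exceptional family with $q$ now in the distinguished role; this is impossible, since $S=\PSL_p(\epsilon r)$ is not isomorphic to any $\PSL_q(\epsilon' r')$ with $q\neq p$ (the rank is an isomorphism invariant, and both primes are odd so $\PSL_2$ does not intervene), and $S=\tw{3}\type{D}_4(r)$ is of neither type $A_{q-1}$ nor of the $q=3$ form as $q\neq p=3$. This contradiction yields $[P,Q]=1$ for suitable $P\in\Syl_p(S)$ and $Q\in\Syl_q(S)$. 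Along the way I record, as in \cite{mn} and the proofs of Theorems~\ref{lie} and \ref{as}, that $p,q$ are odd and good, do not divide $|\wt S/S|$, satisfy $d:=d_p(r)=d_q(r)$, and that the $\{p,q\}$‑part of $\Out(S)$ consists of field automorphisms.

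To lift to $A$, let $C\trianglelefteq A$ be the preimage of the $\{p,q\}$‑Hall subgroup of $A/S$; since this part of $\Out(S)$ is cyclic, $C=S\langle\varphi_p,\varphi_q\rangle$ for commuting field automorphisms $\varphi_p,\varphi_q$ of $p$‑ and $q$‑power order, and $[A:C]$ is prime to $pq$, so it suffices to produce commuting Sylow subgroups of $C$. As $C\trianglelefteq A$, Lemma~\ref{step1} transfers both hypotheses to $C$ and to the subnormal subgroups $S\langle\varphi_q\rangle$ and $S\langle\varphi_p\rangle$. Applying the argument of Theorem~\ref{lie}(I) to $S\langle\varphi_q\rangle$ (whose quotient by $S$ is a $q$‑group, in the commuting‑Sylow case) shows that a Sylow $d$‑torus $\mathbb{S}_d$, and with it $Q$, can be chosen $\varphi_q$‑stable while $P$ is centralized by $\varphi_q$; running the symmetric argument for $S\langle\varphi_p\rangle$ gives, for the same $\langle\varphi_p,\varphi_q\rangle$‑stable torus, that $P$ is $\varphi_p$‑stable and $Q$ is centralized by $\varphi_p$. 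Setting $\wt P:=P\langle\varphi_p\rangle$ and $\wt Q:=Q\langle\varphi_q\rangle$ then gives a Sylow $p$‑subgroup and a Sylow $q$‑subgroup of $C$, hence of $A$, with $[\wt P,\wt Q]=1$, since $[P,Q]=[\varphi_p,\varphi_q]=[\varphi_q,P]=[\varphi_p,Q]=1$. The main obstacle is precisely this simultaneous field‑automorphism step: one must arrange a single Sylow $d$‑torus stable under both $\varphi_p$ and $\varphi_q$ so that the two one‑sided commutation statements combine, which is why the structural bookkeeping of the previous paragraph ($Q$ abelian, $d_p=d_q$, and $p,q\nmid|\wt S/S|$) has to be secured first; the other delicate point is the case analysis, where the two hypotheses are played against each other to eliminate the asymmetric exceptional families. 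The cases $q\nmid|S|$ (or $p\nmid|S|$) are subsumed, being treated exactly as the corresponding case of Theorem~\ref{lie}.
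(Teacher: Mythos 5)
Your reduction to the socle and your base case are sound, and the base case is in fact a genuinely different route from the paper's: where the paper simply quotes \cite[Thm.~5.1]{mn} to get $[P,Q]=1$ inside $S$, you rederive it from the paper's own results, handling sporadic/Tits socles via the symmetric part of the exception list in Theorem \ref{sporadic} (correctly: only $(J_1,\{3,5\})$ and $(J_4,\{5,7\})$ survive, both with trivial outer automorphism group and with elements of order $pq$), alternating socles via Theorem \ref{alt}, and Lie-type socles by applying Proposition \ref{prop:Lieinitial} and Lemma \ref{lem:suzree} with both orderings of $(p,q)$ and observing that the exceptional families in part (v) are pairwise incompatible once both primes are known to be odd. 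That cross-elimination is correct and self-contained.

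The gap is in the lifting step. You declare $C\trianglelefteq A$ to be the preimage of a Hall $\{p,q\}$-subgroup of $A/S$ and assert $C=S\langle\varphi_p,\varphi_q\rangle$ is normal ``since this part of $\Out(S)$ is cyclic.'' Normality is exactly what can fail: your constraints give $p,q\nmid|\wt S/S|$, so $T:=\wt S\cap A$ satisfies only that $T/S$ is a $\{p,q\}'$-group, and nothing forces $T=S$; a field automorphism of $p$- or $q$-power order can act nontrivially on $T/S$. Concretely, nothing you have established excludes, say, $S=\PSL_7(r)$ with $7\mid(r-1)$, $r=r_0^f$, $r_0$ of order $3$ modulo $7$, $p=3$, and $A\supseteq\wt S$: then $[\varphi_p,\delta]\in T\setminus S$ for suitable diagonal $\delta$, so $S\langle\varphi_p,\varphi_q\rangle$ is not normalized by $T$, hence not normal in $A$ --- indeed not even subnormal, since a subnormal Hall $\{p,q\}$-subgroup of $A/S$ would equal $\bO_{\{p,q\}}(A/S)$ and be normal. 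Without subnormality, Lemma \ref{step1} cannot transfer the block hypotheses to $C$, nor to $S\langle\varphi_p\rangle$ and $S\langle\varphi_q\rangle$, so the entire second half of your argument has no hypotheses to run on. This is precisely the difficulty the paper's proof is built around: it introduces $A_{p'},A_{q'}\trianglelefteq A$, invokes Theorem \ref{as} (whose exceptional $\PSL_n(\epsilon r)$ case exists exactly because of this diagonal-automorphism interference) to produce $X_q\trianglelefteq A_{p'}$ and $X_p\trianglelefteq A_{q'}$ with $X_p/T$, $X_q/T$ Sylow subgroups of $A/T$, so that $X_p$ and $X_q$ are subnormal in $A$, Lemma \ref{step1} applies to them, and part (I) of the proof of Theorem \ref{lie} (which, as the paper notes, works with $T$ in place of $S$) yields the commuting $\wt P=P\langle\psi\rangle$ and $\wt Q=Q\langle\varphi\rangle$. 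Your argument is repairable along the same lines --- work modulo $T$ rather than modulo $S$ and route normality through $A_{p'}$ and $A_{q'}$ --- but as written the construction of $C$ is incorrect.
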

\begin{proof}
Note that by Lemma \ref{step1}, $B_p(S)$ has no character of degree divisible by $q$ and $B_q(S)$ has no character of degree divisible by $p$. Then there is $P\in\Syl_p(S)$ and $Q\in\Syl_q(S)$ such that $[P,Q]=1$ by \cite[Thm.~5.1]{mn}. Note that we may therefore assume $A\neq S$. As in part (I) of the proof of Theorem \ref{lie}, we may assume that $S$ is a  simple group of Lie type defined in characteristic $r_0\not\in\{p,q\}$ and let $S=H/\zent{H}$ as in there. We again have
 $d_p(r)=d_q(r)=:d$ (resp.  $d_p(r^2)=d_q(r^2)=:d$) and the same constraints on $p$ and $q$ as there. 
In particular, we see that the Sylow $p$- and $q$-subgroups of $A/S$ are induced by field automorphisms. Recall that $\Aut(S)=\wt{S}\rtimes D$ with $D$ abelian unless $S=\type{D}_4(r)$, in which case $D\cong \sym_3\times C_f$. In each case, there is a normal subgroup $A_{p'}\lhd A$ such that $A/A_{p'}$ is a $p$-group and $A_{p'}/S$ is a $p'$-group. By Lemma \ref{step1}, we have $B_p(A_{p'})$ has no character of degree divisible by $q$ and $B_q(A_{p'})$ has no character of degree divisible by $p$, and therefore by Theorem \ref{as}, either $A_{p'}/S$ has a normal Sylow $q$-subgroup or $S=\PSL_n(\epsilon r)$ with $p$ and $q$ not dividing $r-\epsilon$. In the latter case, we have  $T=\wt{S}\cap A_{p'}=\wt{S}\cap A$ satisfies $T/S$ is a $\{p,q\}'$-group and $A_{p'}/T$ has a normal Sylow $q$-subgroup.
%
 Similarly, there is a normal subgroup $A_{q'}\lhd A$ such that $A/A_{q'}$ is a $q$-group, $A_{q'}/S$ is a $q'$-group, and $A_{q'}/T$ has a normal Sylow $p$-subgroup, where $T=\wt{S}\cap A$ in the exceptional case of $\PSL_n(\epsilon r)$ from Theorem \ref{as} or $T=S$ in the other cases.

Let $X_p$ and $X_q$ be subgroups of $A$ such that $X_p/T$ and $X_q/T$ are Sylow $p$- and $q$-subgroups of $A/T$ with  $X_q\lhd A_{p'}$ and $X_p\lhd A_{q'}$. Applying Lemma \ref{step1} to each $X\in\{X_p, X_q\}$, we have  $B_p(X)$ has no character of degree divisible by $q$ and $B_q(X)$ has no character of degree divisible by $p$. Then by part (I)  of the proof of Theorem \ref{lie} (note that the same proof works in the case $T\neq S$ here), there is $\wt{Q}\in\Syl_q(X_q)$ and $\wt{P}\in\Syl_p(X_p)$ (which are therefore Sylow $q$- and $p$-subgroups of $A$) such that $[\wt{Q}, P]=1=[Q, \wt{P}]$. Since $\wt{Q}=Q\langle\varphi\rangle$ and $\wt{P}=P\langle\psi\rangle$ with $\varphi, \psi$ (commuting) field automorphisms, we therefore have $[\wt{P}, \wt{Q}]=1$.
\end{proof}

\begin{thm}
\label{quasi}
Let $p\neq q$ be two primes. Let $G$ be a quasisimple group with $\bZ(G)>1$ a $p$-group. If $q$ divides $|G|$, then there exists $\chi\in\Irr(B_p(G))$ of degree a multiple of $q$.
\end{thm}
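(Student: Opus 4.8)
The plan is to argue by contradiction: assume no $\chi\in\irr{B_p(G)}$ has degree divisible by $q$, and feed this into the results already established for (quasi)simple groups. Writing $S:=G/\zent{G}$, the hypothesis that $\zent{G}$ is a nontrivial $p$-group gives $\zent{G}=\oh{p}{G}$, so $B_p(G)$ dominates $B_p(S)$ and $\irr{B_p(S)}\subseteq\irr{B_p(G)}$ by inflation; in particular $B_p(S)$ has no character of degree divisible by $q$. First I would dispose of the case where $B_p(S)$ \emph{does} contain such a character, since then inflation finishes at once; so assume it does not. Then Theorem \ref{alt} rules out $S$ alternating, and Theorem \ref{sporadic} shows that if $S$ is sporadic or the Tits group then $(S,p,q)$ is one of its listed exceptions; but the Schur multipliers of those groups have trivial $p$-part for the relevant $p$ (e.g. the multiplier of $M_{22}$ is $C_{12}$, with no $7$-part), so none admits a cover with center a nontrivial $p$-group. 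Hence $S$ is of Lie type, and Lemma \ref{lem:liedefining} forces its defining characteristic $r_0\notin\{p,q\}$. Note also that $p\mid|S|$: since $\zent{G}$ is a quotient of the Schur multiplier of $S$, the primes dividing $|\zent{G}|$ divide $|S|$.

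Write $G=H/Z_0$ with $H=\bH^F$ simply connected and $\zent{H}=\zent{\bH}^F$, so that $\zent{G}=\zent{H}/Z_0$ being a nontrivial $p$-group forces $p\mid|\zent{H}|$. Now I would apply Proposition \ref{prop:Lieinitial} to the simple group $S$: since $B_p(S)$ contains no character of degree divisible by $q$, its conclusion (v) holds, leaving exactly three possibilities. If some $P\in\Syl_p(S)$ and $Q\in\Syl_q(S)$ satisfy $[P,Q]=1$, then (as $q\mid|S|$) the combination of \cite[Lem.~3.1, Prop.~3.5]{mn} with \cite[Lem.~2.1, Prop.~2.2]{malle14} gives $p\nmid|\zent{\bH}^F|=|\zent{H}|$, contradicting $p\mid|\zent{H}|$. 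If $(H,d_q(r),p)$ is one of the exceptional triples of Lemma \ref{lem:exceptinitial}(v), then either $p=2$ and $\bH$ is of type $\type{E}_6$ or $\tw{2}\type{E}_6$ (so $\zent{\bH}^F$ is a $3$-group), or $p=3$ and $\bH=\tw{3}\type{D}_4$ (so $\zent{\bH}^F=1$); in both cases $p\nmid|\zent{H}|$, again a contradiction. This leaves only the possibility $S=\PSL_p(\epsilon r)$ with $p\mid(r-\epsilon)$.

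In this remaining case $\zent{H}\cong C_p$, so $\zent{G}=\zent{H}/Z_0$ nontrivial forces $Z_0=1$ and $G=H=\SL_p(\epsilon r)$. Here the Sylow $p$-subgroup of $H$ is nonabelian, so — as in the proof of Proposition \ref{prop:Lieinitial} before its case split — there is a $p$-element $s\in H^\ast=\PGL_p(\epsilon r)$ of order $p\,(r-\epsilon)_p$ lying in a Coxeter (anisotropic) maximal torus; such $s$ exists because $(r^p-\epsilon^p)_p=p\,(r-\epsilon)_p$, and since $|s|$ exceeds the exponent of a Sylow $d_q$-torus it cannot centralize a Sylow $q$-subgroup. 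Thus its semisimple character $\chi_s\in\irr{B_p(H)}$ has degree $[H^\ast:\cent{H^\ast}{s}]_{r'}$ divisible by $q$. This $\chi_s$ is \emph{not} trivial on $\zent{H}$, which is precisely why $\PSL_p(\epsilon r)$ is a genuine exception in Proposition \ref{prop:Lieinitial} (there one needs triviality on the whole of $\zent{H}$ to deflate to $\PSL_p(\epsilon r)$). But here $G=H$ and $Z_0=1$, so no deflation is needed: $\chi_s$ is itself an irreducible character of $G$ lying in $B_p(G)$ of degree divisible by $q$, the desired contradiction.

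The conceptual heart, and the step I expect to require the most care to phrase correctly, is the observation in the last paragraph: Theorem \ref{quasi} holds precisely because the obstruction to Proposition \ref{prop:Lieinitial} — a semisimple character nontrivial on $\zent{H}$ — ceases to obstruct once we work with the cover $G$ having $Z_0=1$ (equivalently, $\zent{G}$ a nontrivial $p$-group). The rest is bookkeeping: checking that the three disjuncts of (v) are exhausted, that the commuting-Sylow alternative forces $p\nmid|\zent{H}|$ via \cite{mn,malle14}, and that the exceptional types $\type{E}_6,\tw{2}\type{E}_6,\tw{3}\type{D}_4$ have $\zent{\bH}^F$ of trivial $p$-part. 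No new representation-theoretic input beyond Proposition \ref{prop:Lieinitial} is needed; the only genuine computation, that a $p$-element whose order exceeds the split-torus exponent fails to centralize a Sylow $q$-subgroup, is already contained there.
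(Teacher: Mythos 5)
Your reduction (inflation to $S$, disposal of alternating/sporadic socles via Theorems \ref{alt} and \ref{sporadic} and the Schur multipliers, cross-characteristic via Lemma \ref{lem:liedefining}, then Proposition \ref{prop:Lieinitial} plus the \cite{mn}/\cite{malle14} facts to contradict $p\mid|\zent{H}|$) follows the paper's skeleton, but the step you yourself call the conceptual heart is false. In your residual case $G=\SL_p(\epsilon r)$ with $p\mid(r-\epsilon)$, the witness does not exist: for odd $p$, the Coxeter torus of $H^\ast=\PGL_p(\epsilon r)$ is cyclic of order $(r^p-\epsilon^p)/(r-\epsilon)$, whose $p$-part is exactly $p$ (lifting the exponent), and in fact a Sylow $p$-subgroup of $\PGL_p(\epsilon r)$, namely the image of $C_{(r-\epsilon)_p}\wr C_p$ modulo the scalars, has exponent $(r-\epsilon)_p$; so $\PGL_p(\epsilon r)$ contains no $p$-element at all of order $p(r-\epsilon)_p$. (Such elements exist in $\GL_p(\epsilon r)$, but their order drops by a factor of $(r-\epsilon)_p$ or $p$ in the quotient.) This collapse of the ``order exceeds the torus exponent'' argument at $n=p$ is precisely why $\PSL_p(\epsilon r)$ is an exception in Proposition \ref{prop:Lieinitial}(v) in the first place. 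The case is repairable: take $\bar s\in\PGL_p(\epsilon r)$ to be the image of a generator of the $p$-part of the Coxeter torus of $\GL_p(\epsilon r)$; it is a regular $p$-element of order $p$ whose centralizer $T.C_p$ (with $T$ the Coxeter torus of $\PGL_p(\epsilon r)$) has order prime to $q$, so the corresponding semisimple character, which lies in $B_p(\SL_p(\epsilon r))$ because that group has a unique unipotent $p$-block, has degree divisible by $q$. But note that the paper never meets this case at all: it applies Proposition \ref{prop:Lieinitial} to $G$ itself rather than to $S$, so the exceptional alternative reads ``$G=\PSL_p(\epsilon r)$'', which has trivial center and is excluded outright by $\zent{G}>1$. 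That route costs nothing and needs no new construction; I recommend you adopt it.

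There is a second gap, earlier: the sentence ``write $G=H/Z_0$ with $H=\bH^F$ simply connected'' is unjustified. A quasisimple group with quotient $S$ is a quotient of the full Schur cover of $S$, not necessarily of the simply connected group, and the two differ exactly when $S$ has an exceptional Schur multiplier; then there are covers (for instance $2\cdot\operatorname{PSU}_6(2)$, $2\cdot\type{F}_4(2)$, $3\cdot\type{G}_2(3)$, $3\cdot\Omega_7(3)$, $2^2\cdot\tw{2}\type{E}_6(2)$, $2^2\cdot\operatorname{P\Omega}_8^+(2)$, the covers of $\PSL_3(4)$ with center a $2$-group) whose center is a nontrivial $p$-group even though $p\nmid|\zent{H}|$, so your final contradiction is unavailable for them. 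The paper eliminates these by a direct GAP check. Alternatively, one can note that the exceptional part of the multiplier of a group of Lie type is always a group of order a power of the defining characteristic $r_0$: hence either $p=r_0$, in which case Lemma \ref{lem:liedefining} already puts a character of degree divisible by $q$ in $B_p(S)\subseteq B_p(G)$, or $\zent{G}$ only involves the generic part and $G$ really is a central quotient of $\bH^F$. Some argument of this kind must be supplied; as written, these finitely many quasisimple groups are simply not covered by your proof.
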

\begin{proof}

Let $H$ be the full Schur covering group of the simple group $S:=G/\zent{G}$. Then $G=H/Z$ for some $Z\leq\zent{H}$ and our assumption that $|\zent{G}|$ is a power of $p$ implies that $p$ divides $|\zent{H}|$. Note that $\irr{B_p(S)}\subseteq \irr{B_p(G)}$, viewed by inflation. {The cases that $S$ is sporadic}  or that $\zent{H}$ is an exceptional Schur multiplier can be checked in GAP \cite{gap}. 
 By Theorem \ref{alt}, we may assume that $S$ is not an alternating group. Then we may assume that $H$ is a group of Lie type and that $\zent{H}$ is a nonexceptional Schur multiplier for $S$. Further, by Lemma \ref{lem:liedefining}, we may assume that $H$ is defined in characteristic $r_0\not\in\{p,q\}$.
Since we have assumed $\zent{H}$ is nontrivial, $H$ is also not a Suzuki or Ree group. Hence $H$ is as in  Proposition \ref{prop:Lieinitial}. Since $|\zent{G}|$ is a power of $p$, $G$ is not one of the exceptions listed in Proposition \ref{prop:Lieinitial}(v), and therefore there is $P\in\Syl_p(G)$ and $Q\in\Syl_q(G)$ with $[P,Q]=1$.
 Then as in the proof of Theorem \ref{as}, we obtain  that  $p\nmid |\zent{H}|$, a contradiction.
\end{proof}

\section{Brauer's height zero conjecture for two primes}

As we have mentioned, the so-called Brauer's height zero conjecture for two primes has been proven in \cite{lwwz}. The proof uses $p^*$-theory \cite{will} and some specific results on simple groups. We present a proof with a more elementary reduction to (some of) the problems on simple groups that we have solved in this paper.

The following is the main theorem of \cite{lwwz}. Our arguments in this proof are part of the arguments that we will use in the proof of Theorem C. We refer the reader to Section 9A of \cite{isa2} for the elementary properties of the generalized Fitting subgroup $\bF^*(G)$ and the layer $\bE(G)$.

\begin{thm}[Liu-Wang-Willems-Zhang]\label{thm:2pBHZ}
Let $G$ be a finite group and let $p\neq q$ be two primes. If $p$ does not divide $\chi(1)$ for every $\chi\in\Irr(B_q(G))$ and $q$ does not divide $\chi(1)$ for every $\chi\in\Irr(B_p(G))$ then $G$ has a nilpotent Hall $\{p,q\}$-subgroup.
\end{thm}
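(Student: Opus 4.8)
The plan is to prove Theorem \ref{thm:2pBHZ} by induction on $|G|$, reducing to the almost simple and quasisimple cases already handled in Corollary \ref{cor:2pBHZalmostsiple} and Theorem \ref{quasi}, together with the ``only if'' direction of the two-prime conjecture from \cite{mn} to glue things back together. The symmetry of the hypothesis in $p$ and $q$ is crucial and should be exploited throughout. Note first that by Lemma \ref{step1}, the hypothesis passes to every subnormal subgroup $N$: $B_p(N)$ has no character of degree divisible by $q$ and vice versa. So the inductive hypothesis applies to all proper subnormal subgroups. The goal ``$G$ has a nilpotent Hall $\{p,q\}$-subgroup'' is equivalent to the existence of commuting $\wt P\in\Syl_p(G)$ and $\wt Q\in\Syl_q(G)$ whose product is a Hall $\{p,q\}$-subgroup; since we will produce $P\in\Syl_p(G)$ and $Q\in\Syl_q(G)$ with $[P,Q]=1$, it then suffices to observe $PQ$ is then a nilpotent $\{p,q\}$-subgroup of the correct order.

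First I would dispose of the reduction to the case where $\bF^*(G)$ is a single non-abelian simple group or a central extension thereof. Let $E=\bE(G)$ be the layer and consider the components. If $G$ has a nontrivial solvable normal subgroup, or more generally if $\oh{p'q'}{G}\neq 1$ or a nontrivial $\{p,q\}'$-core, I would pass to the quotient by an appropriate normal $\{p,q\}'$-subgroup, which preserves both principal blocks and the hypothesis, and apply induction; conversely a nontrivial normal $p$- or $q$-subgroup can be handled via the block-covering machinery of \cite[Thm.~9.4]{nav} as in Lemma \ref{step1}. The components of $\bE(G)$ are permuted by $G$, and each is quasisimple and subnormal; by induction and by Theorem \ref{quasi} (which forbids a central $p$- or $q$-part in a component meeting both primes) I expect to force each component to be simple, i.e. $\zent{\bE(G)}$ is a $\{p,q\}'$-group, which can then be factored out. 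This reduces to the situation $\bF^*(G)=\bE(G)=S_1\times\cdots\times S_k$ a direct product of non-abelian simple groups permuted transitively (on each orbit) by $G$, with $\cent{G}{\bF^*(G)}$ a $\{p,q\}'$-group.

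Next, in the reduced situation, $G$ embeds into $\prod \Aut(S_i)\rtimes(\text{permutation group})$; restricting to a single orbit and using that the hypothesis descends to the normalizer of a component acting as an almost simple group, Corollary \ref{cor:2pBHZalmostsiple} supplies commuting Sylow $p$- and $q$-subgroups in the almost simple quotient governing each $S_i$. The main obstacle — and the genuinely new content beyond the simple-group input — will be controlling the permutation action of $G$ on the set of components: if $p$ or $q$ divides the order of the group permuting the factors, one must show the hypothesis rules this out, since wreath-product Sylow structure would otherwise destroy the commutativity or the Hall property. I would argue that a nontrivial $p$- or $q$-element cycling the factors produces, via induced/tensor-induced characters lying in the relevant principal block, a character of degree divisible by the other prime, contradicting the hypothesis; here the structure of $B_p$ of a direct power and Clifford theory for the wreath product are the key tools. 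Once the permutation action is shown to be a $\{p,q\}'$-group, the commuting Sylow subgroups from each component assemble, together with the $\{p,q\}'$-part $\cent{G}{\bF^*(G)}$ and the diagonal/field-automorphism contributions controlled as in Corollary \ref{cor:2pBHZalmostsiple}, into a single pair $[\wt P,\wt Q]=1$ for $G$, and $\wt P\wt Q$ is the desired nilpotent Hall $\{p,q\}$-subgroup. I expect the component-permutation step to be where the careful work lies, with everything else being assembly from the already-established simple-group results.
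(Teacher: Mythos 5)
Your plan founders at precisely the step you yourself identify as the main obstacle: the claim that the hypothesis forces the permutation action on the set of components to be a $\{p,q\}'$-group is \emph{false}, so no Clifford-theoretic argument can establish it. Take $S=\AAA_5$, $p=3$, $q=7$, and $G=(\AAA_5\wr \CCC_3)\times \CCC_7$. Since $7\nmid |\AAA_5\wr\CCC_3|$, the principal $7$-block of $G$ is $\{1\}\otimes\Irr(\CCC_7)$, a set of linear characters, and every character in $\Irr(B_3(G))$ has degree dividing $|\AAA_5\wr\CCC_3|$, hence prime to $7$; so the hypothesis holds (and so does the conclusion: a Sylow $3$-subgroup times $\CCC_7$ is a nilpotent Hall $\{3,7\}$-subgroup). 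Yet $G$ permutes the three components of $\bE(G)=\AAA_5^3$ transitively, so $p$ divides the order of the permutation action. The character-theoretic obstruction you hope to exploit exists only when $q$ divides $|S|$: that is exactly the paper's argument, which uses Brandt's bound $|\Irr(B_q(S))|\geq 3$ from \cite{bra} together with \cite[Lem.~3.2]{dmn} to build $\delta\in\Irr(N)$ whose $G$-orbit has length divisible by $p$ and which lies under a character of $B_q(G)$. When $q\nmid|S|$ the principal $q$-block of $N$ is $\{1_N\}$ and no such character can be produced --- correctly so, by the example above. Consequently the configuration ``$p$ divides the permutation part and $q\nmid|N|$'' cannot be excluded; it must be \emph{solved}. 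The paper does this with an assembly argument that is entirely absent from your proposal: writing $\pi=\{p,q\}$, induction gives nilpotent Hall $\pi$-subgroups of $N$ and $G/N$, so $G$ satisfies $D_\pi$ by Hall's theorem \cite[Thm.~D5]{hal}; Wielandt's theorem \cite{wie} places a Sylow $p$-subgroup $P_0$ of $N$ inside a nilpotent Hall $\pi$-subgroup $P_1\times Q_1$ of $B$ (the kernel of the permutation action); and a Frattini argument inside a Hall $\pi$-subgroup $H\supseteq P_1\times Q_1$ of $G$ gives $Q_1\trianglelefteq H$, whence $[Q_1,P]\leq Q_1\cap P_0=1$ for $H=PQ_1$. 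This is the genuinely new content of the case of more than one component, and it is what your sketch is missing.

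A second, lesser gap: you dismiss normal $p$- or $q$-subgroups with ``block-covering machinery,'' but covering only yields Lemma \ref{step1}; it does not reduce you to $\bF^*(G)=\bE(G)$. If $\bO_p(G)\neq 1$, a nilpotent Hall $\pi$-subgroup of $G/\bO_p(G)$ lifts only to a Hall $\pi$-subgroup of $G$ with a \emph{normal} Sylow $p$-subgroup, which need not be nilpotent --- the commutation $[P,Q]=1$ is the whole difficulty. The paper instead passes to a minimal counterexample with a unique minimal normal subgroup (using \cite[Cor.~8]{vdo} to handle subdirect products), and when that subgroup is an abelian $p$-group it forces $\bE(G)=1$ via Theorem \ref{quasi}, so that $\bF^*(G)=\bO_p(G)$ and $G$ has a unique $p$-block by \cite[Cor.~V.3.11]{fei}; then the hypothesis applies to all of $\Irr(G)$ and the It\^o--Michler theorem finishes that case. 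Your proposal contains neither this idea nor a substitute for it, so even its reduction step is incomplete.
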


\begin{proof}
First, we note that by Lemma \ref{step1}  the hypotheses are inherited by normal subgroups and factor groups. Let $G$ be a minimal counterexample. Let $\pi=\{p,q\}$. We may assume that $\bO^{\pi'}(G)=G$ and $\bO_{\pi'}(G)=1$. We claim that $G$ has a unique minimal normal subgroup. If $M$ and $N$ are two different normal subgroups, then $G$ is isomorphic to a subgroup of $G/M\times G/N$. By the minimality of $G$ as a counterexample, $G/M$ and $G/N$ have a nilpotent Hall $\{p,q\}$-subgroup.  By  \cite[Cor.~ 8]{vdo}, $G=G/(M\cap N)$ has a nilpotent Hall $\{p,q\}$-subgroup. The claim follows.

Let $N=S\times\cdots\times S$, with $S$ simple,  be the minimal normal subgroup of $G$. Suppose first that $S$ is abelian.
Then we may assume that $S$ is cyclic of order $p$. Suppose that $E=\bE(G)>1$, so that $G$ has a quasisimple subnormal subgroup $T$. Since $N$ is the unique minimal normal subgroup of $G$, this implies that $Z=\bZ(T)>1$ is a $p$-group.
Now, Theorem \ref{quasi} yields that there exists $\varphi\in\Irr(B_q(N))$ of degree divisible by $p$. This is a contradiction with Lemma \ref{step1}. Therefore, $E=1$ and $\bF^*(G)=\bO_p(G)$.
Hence, $\bC_G(\bO_p(G))\leq\bO_p(G)$ and $G$ has a unique $p$-block by \cite[Cor.~ V.3.11]{fei}. It follows that $q$ does not divide the degree of any irreducible character of $G$ and, by the It\^o--Michler theorem, $G$ has a normal abelian Sylow $q$-subgroup.
By the uniqueness of $N$, this implies that $q$ does not divide $|G|$, and the result is obvious.

Now, we may assume that $S$ is simple nonabelian. Let $n$ be the number of copies of $S$ that appear in $N$. If $n=1$, then $G$  is almost simple and the result follows from Corollary \ref{cor:2pBHZalmostsiple}.
Therefore, we may assume that $n>1$. Write $N=S_1\times\cdots\times S_n$, with $S_i\cong S$.
We have that $G$ is isomorphic to a subgroup of $\Gamma=\Aut(S)\wr\SSS_n$.
Let $B=\Aut(S)^n\cap G$ so that $G/B$ is a transitive permutation group on $\Omega=\{S_1,\dots,S_n\}$. Since $\bO^{\pi'}(G)=G$, we may assume that $p$ divides $|G/B|$.

Suppose first that $q$ divides $|S|$.
By \cite[Lem.~3.2]{dmn}, there exist $\Theta,\Delta\subseteq\Omega$ such that $p$ divides $|G:(G_{\Theta}\cap G_{\Delta})|$. Since $S$ is simple, the order of a Sylow $2$-subgroup of $S$ exceeds $2$. By \cite{bra}, $|\Irr(B_q(S))|\geq3$, so we may take $\gamma_1,\gamma_2,\gamma_3\in\Irr(B_q(S))$. If we choose $\delta\in\Irr(N)$ to be an appropriate product of copies of  $\gamma_1,\gamma_2$ and $\gamma_3$ (copies of $\gamma_1$ in the copies of $S$ corresponding to $\Theta$, $\gamma_2$ in the copies of $S$ corresponding to $\Delta$ and of $\gamma_3$ elsewhere), we have that $p$ divides $|G:G_{\delta}|$. Therefore, $p$ divides the degree of any irreducible character of $G$ over $\delta$. Since some of them belongs to the principal $q$-block, we have a contradiction. We have thus seen that if $p$ divides $|G/B|$ then $q$ does not divide $|N|$. Similarly, if $q$ divides $|G/B|$, then $p$ does not divide $|N|$. Since $\bO^{\pi'}(G)=G$ and $\bO_{\pi'}(G)=1$ this means that one, and only one, of the primes in $\pi$ divides $|G/B||N|$, and that prime divides both factors.

So suppose that $p$ is the prime in $\pi$ that divides $|G/B|$ and $|N|$.
By the inductive hypothesis, $G/N$ and $N$ have nilpotent Hall $\pi$-subgroups. By \cite[Thm.~D5]{hal}, this implies that $G$ satisfies $D_{\pi}$. Recall that this means that $H$ has Hall $\pi$-subgroups, all of them are conjugate  and any $\pi$-subgroup of $G$ is contained in some Hall $\pi$-subgroup. 

Let $H_0=P_0$ be a Sylow $p$-subgroup of $N$. Since $B$ has nilpotent Hall $\pi$-subgroups, \cite{wie} implies that $H_0\leq H_1=P_1\times Q_1$ for some $P_1\in\Syl_p(B), Q_1\in\Syl_q(B)$. Since $G$ is in $D_{\pi}$, there exists  a Hall $\pi$-subgroup $H$ of $G$ such that $H$ contains $H_1$.
Since $q$ does not divide $|G/B|$, $Q_1$ is a Sylow $q$-subgroup of $G$. Thus $H=PQ_1$ for some $P\in\Syl_p(G)$. Note that $H_1=H\cap B\trianglelefteq H$. By Frattini's argument, $H=H_1\bN_{H}(Q_1)$. Since $\bN_{H}(Q_1)$ contains $H_1$, it follows that $Q_1$ is normal in $H$. Thus $[Q_1,P]\leq Q_1$. Furthermore, $P_0=H\cap N$ is normal in $H$ and $H/P_0$ is nilpotent. Therefore, $[Q_1,P]\leq P_0$. It follows that $[Q_1,P]\leq Q_1\cap P_0=1$. Hence, $H=Q_1\times P$ is nilpotent, as wanted.
\end{proof}

\section{Proof of the main results}

In this section, we prove the main results.
We start with the following elementary group theoretical lemma.

\begin{lem}
\label{sub}
Let $\pi$ be a set of primes and let $p\in\pi$. Suppose that $N$ is a subnormal subgroup of a finite group $G$ and $G$ has a $p$-nilpotent Hall $\pi$-subgroup $H$. Then $H\cap N$ is a $p$-nilpotent Hall $\pi$-subgroup of $N$.
\end{lem}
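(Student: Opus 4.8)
The plan is to first reduce to the case that $N$ is normal in $G$, and then to check the two required properties of $H\cap N$ — being a Hall $\pi$-subgroup and being $p$-nilpotent — essentially separately.

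For the reduction I would induct on the subnormal length of $N$ in $G$. Choosing a chain $N=N_0\trianglelefteq N_1\trianglelefteq\cdots\trianglelefteq N_k=G$, the normal case applied to $N_{k-1}\trianglelefteq G$ gives that $H\cap N_{k-1}$ is a $p$-nilpotent Hall $\pi$-subgroup of $N_{k-1}$; since $N$ is subnormal in $N_{k-1}$ of shorter length, the inductive hypothesis then yields that $(H\cap N_{k-1})\cap N=H\cap N$ is a $p$-nilpotent Hall $\pi$-subgroup of $N$. So I may assume $N\trianglelefteq G$.

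Assuming $N\trianglelefteq G$, the Hall property is the standard fact that a Hall $\pi$-subgroup meets a normal subgroup in a Hall $\pi$-subgroup of it: $H\cap N\leq H$ has $\pi$-order, while $[N:H\cap N]=[HN:H]$ divides the $\pi'$-number $[G:H]$, so $H\cap N$ is a Hall $\pi$-subgroup of $N$. For $p$-nilpotency I would take the normal $p$-complement $K=\oh{p'}{H}$, so that $K\trianglelefteq H$ is a $p'$-group with $H/K$ a $p$-group. Since $N\trianglelefteq G$ forces $H\cap N\trianglelefteq H$, the subgroup $K\cap N=K\cap(H\cap N)$ is an intersection of normal subgroups of $H$, hence normal in $H\cap N$; it is a $p'$-group as a subgroup of $K$; and $(H\cap N)/(K\cap N)\cong (H\cap N)K/K\leq H/K$ is a $p$-group. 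Thus $K\cap N$ is a normal $p$-complement of $H\cap N$, proving $H\cap N$ is $p$-nilpotent.

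This argument is entirely elementary, so there is no genuine obstacle; the only place requiring slight care is the final step, where one must simultaneously confirm that $K\cap N$ is normal in $H\cap N$, that it is a $p'$-group, and that the quotient is a $p$-group, these three facts together exhibiting the desired normal $p$-complement.
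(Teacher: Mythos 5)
Your proof is correct and takes essentially the same approach as the paper: reduce to the normal case by induction on the subnormal length, then obtain the Hall property from $|N:(H\cap N)|=|HN:H|$ being a $\pi'$-number. The only difference is that where the paper simply cites the fact that any subgroup of a $p$-nilpotent group is $p$-nilpotent, you verify it by explicitly exhibiting $K\cap N$ as a normal $p$-complement of $H\cap N$ (and even there your appeal to $H\cap N\trianglelefteq H$ is unnecessary, since $K\trianglelefteq H$ already gives $K\cap U\trianglelefteq U$ for every subgroup $U\leq H$).
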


\begin{proof}
First, we note that any subgroup of a $p$-nilpotent group is $p$-nilpotent so it suffices to prove that $H\cap N$ is a Hall $\pi$-subgroup of $N$. If $N\trianglelefteq G$ this follows from the facts that $N\cap H$ is a $\pi$-group and  $|N:(N\cap H)|=|NH:H|$ is a $\pi'$-number and the general case follows by induction on the subnormal length.
\end{proof}

We will use several times the following.

\begin{lem}
\label{copr}
 Let $p$ and $q$ be two different primes. Suppose that $G=AN$ is the semidirect product of a $q$-group $A$ acting on a $q'$-group $N$. If $q$ does not divide $\chi(1)$ for every $\chi\in\Irr(B_p(G))$ then every character in $\Irr(B_p(N))$ is $G$-invariant.
 \end{lem}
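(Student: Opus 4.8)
The plan is to mimic the opening move of Lemma \ref{step1} and then exploit that $G/N$ is a $q$-group. Since $G=AN$ is a semidirect product with $N$ a $q'$-group and $A$ a $q$-group, we have $N\trianglelefteq G$ and $G/N\cong A$ is a $q$-group. Fix an arbitrary $\theta\in\Irr(B_p(N))$. Because the principal block $B_p(N)$ is covered by $B_p(G)$, I would invoke \cite[Thm.~9.4]{nav} to produce a character $\chi\in\Irr(G|\theta)$ lying in the principal block $B_p(G)$. The hypothesis then immediately gives $q\nmid\chi(1)$.

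The second step is to read off the index of the inertia group $I_G(\theta)$. Since $N\leq I_G(\theta)\leq G$ and $G/N$ is a $q$-group, the index $|G:I_G(\theta)|$ divides $|G:N|$ and is therefore a power of $q$. On the other hand, Clifford's theorem tells us that the degree of every irreducible constituent of $\theta^G$ is a multiple of $|G:I_G(\theta)|$, so in particular $|G:I_G(\theta)|$ divides $\chi(1)$. Combining this with $q\nmid\chi(1)$ forces $|G:I_G(\theta)|=1$, i.e. $\theta$ is $G$-invariant. As $\theta\in\Irr(B_p(N))$ was arbitrary, this is exactly the assertion.

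I expect no genuine obstacle here: the argument is a direct combination of block covering (to remain inside the principal block while inducing) with the observation that the inertial index is simultaneously a divisor of $\chi(1)$ and a power of $q$. The only point deserving a word of care is the legitimacy of the appeal to \cite[Thm.~9.4]{nav}, which rests precisely on the fact that $B_p(N)$ is covered by $B_p(G)$ — this is where the restriction to principal blocks enters, and it is the same input already used in Lemma \ref{step1}. I would also note in passing that the full coprimality of $N$ and $A$ is not needed beyond guaranteeing that $G/N$ is a $q$-group; that structural fact about $G/N$ is the only feature of the hypothesis the argument actually consumes.
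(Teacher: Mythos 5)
Your proof is correct and is essentially the paper's own argument: both rest on the same two ingredients, namely Navarro's Theorem 9.4 (via the covering of $B_p(N)$ by $B_p(G)$) to produce $\chi\in\Irr(G|\theta)\cap\Irr(B_p(G))$, and Clifford theory combined with the fact that $G/N$ is a $q$-group to force the inertial index $|G:I_G(\theta)|$ to be $1$. The only difference is presentational — the paper phrases it as a proof by contradiction, while you argue directly — which is immaterial.
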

 
 \begin{proof}
 By way of contradiction, suppose that there exists $\theta\in\Irr(B_p(N))$ that is not $G$-invariant. If follows from Clifford's correspondence \cite[Thm. ~6.11]{isa} that $q$ divides the degree of any character in $\Irr(G|\theta)$. Since $B_p(G)$ covers $B_p(N)$, \cite[Thm. ~9.4]{nav} implies that there exists $\chi\in\Irr(G|\theta)$ in the principal $p$-block of $G$. This contradicts the hypothesis.
 \end{proof}

The following was proved for arbitrary blocks in \cite{nav04} assuming the Alperin--McKay conjecture. Here we will prove the case of principal blocks using Theorem \ref{thm:2pBHZ}.

\begin{thm}
\label{main}
Suppose that a finite group $A$ acts coprimely on $G$. Let $p$ be a prime and let $P$ be an $A$-invariant Sylow $p$-subgroup of $G$. If every $\chi\in\Irr(B_p(G))$ is $A$-invariant, then $[A,P]=1$.
\end{thm}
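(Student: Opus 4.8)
The plan is to realize the hypotheses as principal-block degree conditions on the semidirect product $\Gamma := G \rtimes A$ and then invoke Theorem \ref{thm:2pBHZ}. First I would clear away trivialities: if $p \nmid |G|$ then $P = 1$ and there is nothing to prove, so assume $p \mid |G|$. Next I would reduce to the case where $A$ is a $q$-group for a single prime $q$. Every Sylow subgroup $A_q$ of $A$ acts coprimely on $G$, normalizes $P$ (so $P$ is $A_q$-invariant), and fixes each character of $B_p(G)$; hence once the $q$-group case is established it gives $[A_q, P] = 1$ for every Sylow subgroup $A_q$ of $A$, and since $A$ is generated by its Sylow subgroups this yields $[A, P] = 1$. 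So assume $A$ is a $q$-group. Coprimality gives $q \nmid |G|$, and in $\Gamma$ we have $P \in \Syl_p(\Gamma)$, $A \in \Syl_q(\Gamma)$, and $G = \bO_{q'}(\Gamma)$.

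The core is to check the two degree hypotheses of Theorem \ref{thm:2pBHZ} for $\Gamma$. The $q$-block condition is immediate: since $q \nmid |G|$ the principal $q$-block of $G$ is $B_q(G) = \{1_G\}$, and as $B_q(\Gamma)$ covers only $B_q(G)$ (the covered blocks form a single $\Gamma$-orbit and the principal block is $\Gamma$-stable), every $\chi \in \Irr(B_q(\Gamma))$ lies over $1_G$ and is therefore inflated from $\Gamma/G \cong A$. Thus $\chi(1)$ divides $|A|$, so $p \nmid \chi(1)$. For the $p$-block condition, $B_p(\Gamma)$ covers only $B_p(G)$, so each $\chi \in \Irr(B_p(\Gamma))$ lies over some $\theta \in \Irr(B_p(G))$; by hypothesis $\theta$ is $A$-invariant, hence $\Gamma$-invariant, and extends to $\Gamma$ by coprimality. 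Because $\Gamma/G \cong A$ is a $p'$-group, its principal $p$-block is the singleton $\{1_{\Gamma/G}\}$, and so the block form of Gallagher's theorem forces the only constituent of $\Irr(B_p(\Gamma))$ over $\theta$ to be the canonical extension $\hat\theta$; in particular $\chi(1) = \theta(1)$, which is prime to $q$ since $q \nmid |G|$.

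With both conditions verified, Theorem \ref{thm:2pBHZ} shows that $\Gamma$ has a nilpotent Hall $\{p,q\}$-subgroup. On the other hand, since $A$ normalizes $P$, the product $PA = P \rtimes A$ is a subgroup of order $|P|\,|A| = |\Gamma|_{\{p,q\}}$, i.e. a Hall $\{p,q\}$-subgroup of $\Gamma$. By Wielandt's theorem \cite{wie}, the presence of a nilpotent Hall $\{p,q\}$-subgroup makes all Hall $\{p,q\}$-subgroups conjugate, so $PA$ is itself nilpotent; hence $[A,P] = 1$, finishing the $q$-group case and, via the reduction above, the theorem.

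The step I expect to be the main obstacle is the $p$-block computation: one must rule out any surviving Gallagher twist $\hat\theta\beta$ with $\beta \in \Irr(A)$ nonlinear, which is precisely where the triviality of the principal $p$-block of the $p'$-group $\Gamma/G$ enters. The guiding point throughout is that $B_\ell(X) = \{1_X\}$ whenever $\ell \nmid |X|$ — not all of $\Irr(X)$ — and it is this fact, applied to both $G$ (for $\ell = q$) and $\Gamma/G$ (for $\ell = p$), that makes the two degree conditions hold simultaneously.
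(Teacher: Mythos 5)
Your overall strategy --- pass to $\Gamma = G\rtimes A$, verify the two degree hypotheses, apply Theorem \ref{thm:2pBHZ}, and finish with Wielandt's theorem \cite{wie} --- is exactly the paper's, and both your $q$-block computation and the final conjugacy step are correct (you even make explicit a step the paper leaves implicit). The gap is at the step you yourself flagged: the $p$-block condition. Your reduction only brings you to $A$ a $q$-group, so you must exclude the twists $\hat\theta\beta$ with $\beta\in\Irr(\Gamma/G)$ nonlinear, and you do this by invoking a ``block form of Gallagher's theorem'': that $\hat\theta\beta$ can lie in $B_p(\Gamma)$ only when $\beta$ lies in $B_p(\Gamma/G)=\{1_{\Gamma/G}\}$. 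No such principle exists: the block of $\hat\theta\beta$ is not controlled by the block of $\beta$, even for coprime semidirect products. Concretely, take $p=3$, $q=2$, $G=C_3\times C_3$, $A=Q_8$ acting faithfully (and coprimely) on $G$, and $\theta=1_G$, which is $A$-invariant and lies in $B_3(G)$. Then $\bO_{3'}(\Gamma)=1$, so Fong's theorem \cite[Thm.~10.20]{nav} gives $\Irr(B_3(\Gamma))=\Irr(\Gamma)$; hence \emph{every} $\hat\theta\beta=\beta$ with $\beta\in\Irr(Q_8)$ lies in the principal $3$-block of $\Gamma$, including the nonlinear $\beta$ of degree $2$, whose degree is divisible by $q$. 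Thus for a single $A$-invariant $\theta\in\Irr(B_p(G))$ --- which is all your argument uses at this point --- the dichotomy you assert is false. (This does not contradict Theorem \ref{main} itself, since here not all characters of $B_3(G)$ are $Q_8$-invariant; but your justification never uses the invariance of the other characters of $B_p(G)$, so the step cannot be repaired as written.)

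The fix is cheap and is what the paper does: reduce further, to $A=\langle a\rangle$ cyclic of prime-power order. It suffices to show $[a,P]=1$ for each $a\in A$ of prime-power order, since such elements generate $A$, and all hypotheses clearly pass to $\langle a\rangle$ (coprimality, invariance of $P$, invariance of the characters of $B_p(G)$). Once $A$ is cyclic, every $\beta\in\Irr(\Gamma/G)$ is linear, so each $\chi\in\Irr(B_p(\Gamma))$ lying over an $A$-invariant $\theta\in\Irr(B_p(G))$ satisfies $\chi(1)=\theta(1)$, which is prime to $q$ because $q\nmid|G|$; no block-theoretic control of the Gallagher twists is needed at all. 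With this single change, the rest of your argument --- your covering argument for the $q$-block condition (the paper instead notes that $AG$ is $q$-solvable and quotes \cite[Thm.~10.20]{nav} to get $\Irr(B_q(AG))=\Irr(A)$, which amounts to the same thing), followed by Theorem \ref{thm:2pBHZ} and the Wielandt conjugacy finish --- goes through verbatim.
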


\begin{proof}
 We argue by induction on $|AG|$. It suffices to see that for  $a\in A$, $P$ commutes with $a$. Thus, we may assume that $A=\langle a\rangle$ is cyclic of $q$-power order for some prime $q$. Since $AG$ is $q$-solvable, it follows from \cite[Thm. ~10.20]{nav}  that $\Irr(B_q(AG))=\Irr(A)$ is a set of characters of $p'$-degree.

 On the other hand, let $\chi\in\Irr(B_p(AG))$. Let $\theta\in\Irr(G)$ lying under $\chi$. Since $\theta$ belongs to $B_p(G)$, it follows that $\theta$ is $A$-invariant, so $\chi=\lambda\theta$ for some $\lambda\in\Irr(AG/G)$. In particular, $\chi$ has $q'$-degree. Now, Theorem \ref{thm:2pBHZ} implies that $A$ centralizes $P$, as wanted.
 \end{proof}

The following is Theorem C, which we restate.

\begin{thm}
Let $p, q$ be two different primes. Let $G$ be a finite group and assume that $S$ is not a composition factor of $G$ if $(S,p,q)$ is one of the $5$ exceptions listed in Theorem \ref{sporadic}.
Suppose that $q$ does not divide $\chi(1)$ for every $\chi\in\Irr(B_p(G))$. Then $G$ has a $p$-nilpotent Hall $\{p,q\}$-subgroup.
\end{thm}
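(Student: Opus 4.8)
The plan is to argue by induction on $|G|$, taking $G$ to be a minimal counterexample and writing $\pi=\{p,q\}$. The conclusion is equivalent to the existence of $P\in\Syl_p(G)$ and $Q\in\Syl_q(G)$ with $P$ normalizing $Q$, i.e.\ to $Q\rtimes P$ being a $p$-nilpotent Hall $\pi$-subgroup; note every such subgroup is solvable, so Wielandt's theory of $D_\pi$-groups and the fact that $p$-nilpotency is a formation will be available throughout. By Lemma \ref{step1} the hypothesis passes to normal subgroups, and it passes to quotients since $\Irr(B_p(G/M))\subseteq\Irr(B_p(G))$ by inflation; the excluded composition factors are inherited as well. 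Standard reductions give $\bO^{\pi'}(G)=G$. The key first step is to reduce to $\bO_{p'}(G)=1$: if $W:=\bO_{p'}(G)>1$, then since $W$ is a normal $p'$-subgroup we have $\Irr(B_p(G))=\Irr(B_p(G/W))$, so $G/W$ satisfies the hypotheses and by minimality has a $p$-nilpotent Hall $\pi$-subgroup $\bar Q\rtimes\bar P$; lifting $\bar P$ to $P\in\Syl_p(G)$ and the preimage of $\bar Q$ to a $p'$-group $\hat Q$ normalized by $P$, coprime action of the $p$-group $P$ on $\hat Q$ yields a $P$-invariant $Q_0\in\Syl_q(\hat Q)=\Syl_q(G)$, a contradiction. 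Finally, as in Theorem \ref{thm:2pBHZ}, a subdirect-product argument (the class of groups with $p$-nilpotent, hence solvable, Hall $\pi$-subgroups being closed under subdirect products by \cite{vdo} and the $D_\pi$-theory) reduces to the case of a unique minimal normal subgroup $N$.

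Since $\bO_{p'}(G)=1$, the subgroup $N$ is not a $p'$-group. If $N$ is abelian it is an elementary abelian $p$-group, and the aim is to prove $\bF^*(G)=\bO_p(G)$. Every component $T$ of $G$ has $\bZ(T)$ a $p$-group (because $N\leq\bZ(\bE(G))$ and $\bO_{p'}(\bZ(\bE(G)))=1$), so Theorem \ref{quasi} forbids $q\mid|T|$ and $\bE(G)$ is a $q'$-group. To eliminate the possibility $\bE(G)\neq1$ I would let $Q\in\Syl_q(G)$ act coprimely on $E:=\bE(G)$: were $Q$ to move some $\theta\in\Irr(B_p(E))$, then (as $B_p(G)$ covers $B_p(E)$) a character of $G$ over $\theta$ lying in $B_p(G)$ would have degree divisible by $q$, against the hypothesis; hence $Q$ fixes every character of $B_p(E)$, and Theorem \ref{main} gives $[Q,P_E]=1$ for a $Q$-invariant $P_E\in\Syl_p(E)$, which combined with $\bC_G(\bF^*(G))\leq\bF^*(G)$ rules out nontrivial components. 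Once $\bF^*(G)=\bO_p(G)$ is self-centralizing, $G$ has a unique $p$-block by \cite[Cor.~V.3.11]{fei}, so every $\chi\in\Irr(G)$ has $q'$-degree, and the It\^o--Michler theorem produces a normal Sylow $q$-subgroup; with $\bO_{p'}(G)=1$ this forces $q\nmid|G|$, so $G$ is not a counterexample.

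If $N=S_1\times\cdots\times S_n$ is nonabelian, then $\bO_{p'}(G)=1$ forces $p\mid|S|$, and since $N$ is the unique minimal normal subgroup, $\bC_G(N)=1$, so $G$ embeds in $\Aut(S)\wr\SSS_n$; set $B=G\cap\Aut(S)^n\trianglelefteq G$, so $G/B\leq\SSS_n$ is transitive. Using \cite[Lem.~3.2]{dmn} together with $|\Irr(B_p(S))|\geq3$ (valid since $p\mid|S|$ and the Sylow $p$-subgroup of $S$ has order exceeding $2$, by \cite{bra}), one sees that $q\mid|G/B|$ would yield a character $\delta\in\Irr(B_p(N))$ with $q\mid|G:G_\delta|$, producing a character of $B_p(G)$ of degree divisible by $q$; as $p\mid|S|$ this is impossible, so $q\nmid|G/B|$ and $\Syl_q(G)\subseteq B$. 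For $n=1$ the group is almost simple and I would conclude by combining Theorems \ref{as} and \ref{lie} exactly as in Corollary \ref{cor:2pBHZalmostsiple}, decomposing $G/S$ through $T=\wt S\cap G$ and the field-automorphism structure. For $n>1$ I would induct: $B$ and $G/N$ have $p$-nilpotent (hence solvable) Hall $\pi$-subgroups, and every composition factor $S$ of $G$ has a Sylow $p$-subgroup normalizing a Sylow $q$-subgroup (Theorems \ref{sporadic}, \ref{alt} and Proposition \ref{prop:Lieinitial}, the five exclusions being precisely where this fails), so $N$ does as well; hence $G\in D_\pi$ by \cite[Thm.~D5]{hal}. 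Choosing a $p$-nilpotent Hall $\pi$-subgroup $H_1=Q_1\rtimes P_1$ of $B$ and a Hall $\pi$-subgroup $H\supseteq H_1$ of $G$, the complement $Q_1=\bO_q(H_1)\in\Syl_q(G)$ is characteristic in $H_1=H\cap B\trianglelefteq H$, hence normal in $H$, so $H=Q_1\rtimes P$ is $p$-nilpotent.

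The main obstacle is the asymmetry of the hypothesis: we control only $\Irr(B_p(G))$ and never $\Irr(B_q(G))$, so (unlike in Theorem \ref{thm:2pBHZ}) the combinatorial argument gives only the one-sided implication $q\mid|G/B|\Rightarrow p\nmid|S|$, and every reduction must be arranged so that the target Sylow $q$-subgroup ends up \emph{normal} in a Hall $\pi$-subgroup. The delicate point is that $p$-nilpotency is not preserved under extension by a normal $p$-subgroup, which is exactly why reducing to $\bO_{p'}(G)=1$ (so that coprime action supplies a $P$-invariant Sylow $q$-subgroup, cf.\ Lemma \ref{copr}) and channeling the nonabelian case through $B$ with $q\nmid|G/B|$ (so that the normal $q$-complement is characteristic) are both essential. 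I expect the two hardest verifications to be the coprime-action elimination of components in the abelian case and the almost-simple case $n=1$, where Theorems \ref{as} and \ref{lie} must be pieced together over the various shapes of $\Out(S)$.
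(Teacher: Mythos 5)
Your skeleton (minimal counterexample, Lemma \ref{step1}, the coprime-action reduction to $\bO_{p'}(G)=1$, and the Brandt--\cite{dmn} wreath-product argument giving $q\nmid |G/B|$) is fine, but the proof breaks at exactly the two places where you import machinery from the symmetric setting of Theorem \ref{thm:2pBHZ} that is unavailable under the one-sided hypothesis. First, the reduction to a unique minimal normal subgroup: \cite[Cor.~8]{vdo} is a statement about \emph{nilpotent} Hall $\pi$-subgroups, and no subdirect-product closure is known (or proved by you) for the class of groups possessing a $p$-nilpotent Hall $\{p,q\}$-subgroup; the appeal to ``$D_\pi$-theory'' cannot repair this, because having a $p$-nilpotent Hall $\{p,q\}$-subgroup does not imply $D_{\{p,q\}}$ --- the paper's own example $\PSL_2(11)$ with $p=2$, $q=3$ has a $p$-nilpotent Hall subgroup $\DDD_{12}$ and a non-conjugate Hall subgroup $\AAA_4$. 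Second, and for the same reason, your $n>1$ step fails: Hall's Theorem D5 requires the normal subgroup to have \emph{nilpotent} Hall $\pi$-subgroups, whereas your $N=S_1\times\cdots\times S_n$ (think of $\PSL_2(11)^n$) is only guaranteed $p$-nilpotent ones, so you cannot conclude $G\in D_\pi$; without $D_\pi$ there is no Hall $\pi$-subgroup $H$ of $G$ containing $H_1$, and the ``$Q_1$ characteristic in $H\cap B\trianglelefteq H$'' argument never gets started. There is a further gap in your abelian case: non-invariance of some $\theta\in\Irr(B_p(E))$ under one fixed $Q\in\Syl_q(G)$ does not give $q\mid |G:G_\theta|$ (a different Sylow $q$-subgroup may stabilize $\theta$; Lemma \ref{copr} works precisely because there the relevant quotient is a $q$-group), and even granting Theorem \ref{main}'s conclusion $[Q,P_E]=1$, centralizing one Sylow $p$-subgroup of $E$ is far from contradicting $\bC_G(\bF^*(G))\leq \bF^*(G)$, so components are not ruled out. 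Finally, your $n=1$ case cannot be settled ``exactly as in Corollary \ref{cor:2pBHZalmostsiple}'', whose proof uses the hypothesis on $B_q$ as well (via \cite[Thm.~5.1]{mn} and the subgroup $A_{q'}$).

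The paper's actual proof is organized to avoid every one of these points: it never reduces to a unique minimal normal subgroup. Instead it proves that the minimal normal subgroups of $G/\bZ(\bE(G))$ are simple (Step 3), controls the outer part with Theorem \ref{as}, It\^o's theorem and Schreier's conjecture (Step 4), shows $q\nmid |C|$ for $C$ the centralizer of the layer (Step 5), and then splits on $Z=\bZ(\bE(G))$. When $Z>1$ it finds a \emph{central} minimal normal subgroup $L$ and inducts on $G/L$, where normality of $Q$ in $PQ$ follows from $Q$ being characteristic in $QL\trianglelefteq PQ$ --- no conjugacy of Hall subgroups is ever invoked. When $Z=1$ it uses the coprime-action results (Lemma \ref{copr}, Theorem \ref{main}, \cite[Lem.~4.29]{isa}) to force $G/E$ to be a $q$-group and $C=1$, so that $G$ embeds as a \emph{subnormal} subgroup of a direct product $\Gamma$ of almost simple groups (subnormality holds because $\Gamma/E$ is a $q$-group, hence nilpotent), each factor handled by Theorems \ref{sporadic}, \ref{alt} and \ref{lie}; then Lemma \ref{sub}, the inheritance of $p$-nilpotent Hall subgroups by subnormal subgroups, finishes. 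Replacing conjugacy/dominance of Hall subgroups by ``subnormality plus inheritance'' is precisely the idea your proposal is missing, and it is what makes the one-sided hypothesis workable.
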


\begin{proof}

We argue by induction on $|G|$.
We split the proof in a series of steps.

{\it Step 1:}
We may assume that $\bO_{p'}(G)=1$ and $\bO^{q'}(G)=G$.

\medskip

Suppose that $K=\bO_{p'}(G)>1$. By the inductive hypothesis, there exists $P\in\Syl_p(G)$ such that $PK/K$ normalizes $QK/K$ for some $Q\in\Syl_q(G)$, i.e.,
$$
PK/K\leq\bN_{G/K}(QK/K)=\bN_G(Q)K/K.
$$
Hence, $P\leq \bN_G(Q)K$. Since $K$ is a $p'$-group, we conclude that $\bN_G(Q)$ contains a Sylow $p$-subgroup of $G$, as wanted.

Now, suppose that $N$ is a proper normal subgroup of $G$  of $q'$-index.  Let $P$ be a Sylow $p$-subgroup of $G$, so that $|PN|_{\{p,q\}}=|G|_{\{p,q\}}$. By Lemma \ref{step1} and the inductive hypothesis, $N$ has a $p$-nilpotent Hall $\{p,q\}$-subgroup. It follows from Lemma 2.1 of \cite{lwwz} that $G$ has  a Sylow $p$-subgroup that  normalizes a Sylow $q$-subgroup of $G$, as desired.

\medskip

Now, let $E=\bE(G)$ be the layer of $G$.

\medskip

{\it Step 2:} We may assume that $E>1$.

\medskip

Suppose that $E=1$. Then $\bF^*(G)=\bO_p(G)$ and by  \cite[Thm. ~ 9.8]{isa2} $\bC_G(\bO_p(G))\subseteq\bO_P(G)$. Now, \cite[Cor. ~ V.3.11]{fei} implies that $G$ has a unique $p$-block. Therefore, $q$ does not divide $\chi(1)$ for every $\chi\in\Irr(G)$ and by the It\^o--Michler theorem $G$ has a normal abelian Sylow $q$-subgroup.
The result follows.

\medskip

Let $Z=\bZ(E)$, so that $Z$ is a $p$-group (by Step 1) and $E/Z$ is a direct product of nonabelian minimal normal subgroups of $G/Z$ of order divisible by $p$.
Write $E/Z=N_1/Z\times\cdots\times N_t/Z$, where $N_i/Z$ are the non-abelian minimal normal subgroups of $G/Z$.  Let $N_i/Z$ be the product of $j_i$ copies $S_{j_1},\dots,S_{j_i}$ of a non-abelian simple group $S_i$.   Let $C_i/Z=\bC_{G/Z}(N_i/Z)$, so that $G/C_i$ is isomorphic to a subgroup of $\Aut(N_i/Z)=\Aut(S)\wr\SSS_{j_i}$
that contains $M_i/C_i\cong N_i/Z$.

  \medskip
{\it Step 3:}
We claim that $j_i=1$ for every $i=1,\dots,t$.

\medskip
Let $L_i/C_i=\Aut(S)^{j_i}\cap G/C_i$.
 Suppose that $q$ divides $|G/L_i|$.
Note that $\tilde{G}=G/L_i$ acts on $N_i/Z$ as a permutation group on $\Omega=\{S_{j_1},\dots,S_{j_i}\}$. By  \cite[Lem.~3.2]{dmn}, there exist $\Gamma,\Delta\subseteq \Omega$ such that $q$ divides $|\tilde{G}:(\tilde{G}_{\Gamma}\cap\tilde{G}_{\Delta})|$. It follows from  \cite[Thm.~3.18]{nav} and \cite{bra} that the principal $p$-block of $S_i$ has at least $3$ irreducible characters (because a Sylow $2$-subgroup of $S$ has order at least $4$). Let
$\gamma_1,\gamma_2,\gamma_3\in\Irr(B_p(S_i))$. Choosing $\psi\in\Irr(B_p(M_i/C_i))$ as a suitable product of copies of $\gamma_1,\gamma_2$ and $\gamma_3$, we can obtain that  $q$ divides $|G:G_\psi|$. It follows from Clifford's correspondence  \cite[Thm.~6.11]{isa} that for every $\chi\in\Irr(G|\psi)$, $q$ divides $\chi(1)$.  Using again \cite[Thm.~9.4]{nav}, we see that there exists $\chi\in\Irr(G|\psi)$ in $B_p(G)$. This contradicts the hypothesis.

Therefore, we may assume that $G/L_i$ is a $q'$-group. Since by Step 1 $G=\bO^{q'}(G)$, we deduce that $G=L_i$. This implies that $G/C_i$ is isomorphic to a subgroup of $\Aut(S)^{j_i}$. Since $M_i/C_i$ is a minimal normal subgroup of  $G/C_i$, this forces $j_i=1$.

\medskip

In other words, we have shown that the non-abelian minimal normal subgroups of $G/Z$ are simple groups. Write $E/Z=X_1\times\cdots\times X_s$ where $X_1,\dots, X_s$ are the non-abelian (simple) minimal normal subgroups of $G/Z$.
Let $C/Z=\bC_{G/Z}(E/Z)$, so that $G/C$ is isomorphic to a subgroup  of $\Aut(X_i)\times\cdots\times\Aut(X_s)$ that contains $EC/C\cong E/Z$. Put $M=EC $.

\medskip
{\it Step 4:}  $G/M$ contains a normal subgroup $T/M$ such that $G/T$ is an abelian $q$-group and $|T/M|$ is prime to $p$ and $q$.

\medskip

  Note that  $G/M$ is isomorphic to a subgroup of $\Out(X_1)\times\cdots\times\Out(X_s)$ and by Schreier's conjecture $G/M$ is solvable. Write $\overline{G}=G/M$. By  \cite[Thm.~10.20]{nav}, we have that $\Irr(\overline{G}/\bO_{p'}(\overline{G}))=\Irr(B_p(\overline{G}))\subseteq\Irr(B_p(G))$ is a set of characters of $q'$-degree. By  It\^o's theorem \cite[Thm.~12.33]{isa}
$\overline{G}/\bO_{p'}(\overline{G})$ has a normal Sylow $q$-subgroup. This implies that $q$ does not divide $\overline{G}/\bO_{p'}(\overline{G})$, so by Step 1 $\overline{G}=\bO_{p'}(\overline{G})$ is a $p'$-group.  

Recall that  $G/C$ is isomorphic to a subgroup  of $\Aut(X_1)\times\cdots\times\Aut(X_s)$ that contains $X_1\times\cdots\times X_s$. Let $\pi_i:G/C\longrightarrow \Aut(X_i)$ be the projection homomorphism, so that $\pi_i(G/C)$ is isomorphic to a factor group of $G$. Thus, $\Irr(B_p(\pi_i(G/C)))\subseteq\Irr(B_p(G))$ is a set of characters of $q'$-degree. Note also that $G/C$ is isomorphic to a subgroup of $\pi_1(G/C)\times\cdots\times\pi_s(G/C)$.  

On the other hand, $\pi_i(G/C)$ is an almost simple group with socle $X_i$. 
It follows from Theorem \ref{as} that 
$\pi_i(G/C)$ has a normal subgroup $T_i$ such that $X_i\leq T_i$, $T_i/X_i$ is a $\{p,q\}'$-group  and $\pi_i(G/C)/T_i$ has a normal abelian Sylow $q$-subgroup. Therefore, $\frac{\pi_1(G/C)\times\cdots\times\pi_s(G/C)}{X_1\times\cdots\times X_s}$ is an extension of a $\{p,q\}'$-group 
by a group with a normal abelian Sylow $q$-subgroup. Since $G/M$ is a subgroup of this group, we conclude that the same holds for $G/M$. Let $T/M\trianglelefteq G/M$ be a $\{p,q\}'$-group such that $G/T$ has a normal abelian Sylow $q$-subgroup. By Step 1 $\bO^{q'}(G/T)=G/T$. We deduce that $G/T$ is an abelian $q$-group, as wanted.

 
 \medskip
 {\it Step 5:} $q$ does not divide $|C|$ and $C$ has a unique $p$-block.
 \medskip

 Since $E$ contains all the components of $G$, and $E\cap C=Z$, it follows that $\bE(C)=1$ so $\bF^*(C)=\bF(C)=\bO_p(C)$. By  \cite[Thm.~9.8]{isa2} we conclude that $\bC_C(\bO_p(C))\subseteq \bO_p(C)$  so $C$ contains a unique $p$-block again by  \cite[Cor.~ V.3.11]{fei}.
 Hence $q$ does not divide the degree of any irreducible character in $C$ by Lemma \ref{step1}. Now,  the It\^o--Michler theorem implies that  $C$ has a normal Sylow $q$-subgroup.
By Step 1, $\bO_q(C)=1$ so
  $q$ does not divide $|C|$.

  \medskip
  {\it Step 6:} Final Step.
  \medskip

By Step 1, $Z$ is a $p$-group. 
 Suppose first that $Z>1$. Then there exists a quasisimple subnormal subgroup $X$ of $G$ with $\bZ(X)$ a nontrivial $p$-group. By Step 3, $X$ is normal in $G$. Let $L\leq\bZ(X)$ be a minimal normal subgroup of $G$.  Let $D=\bC_G(L)$. We claim that $G=D$. By way of contradiction, suppose that $D<G$. By Step 1, every nontrivial factor group of $G/D$ has order divisible by $q$. 
 
 Suppose first that $D$ is cyclic. Therefore, $G/D$ is an abelian $p'$-group that acts faithfully on $L$. By \cite[Prop.~12.1]{mw} it also acts faithfully on $\Irr(L)$. Thus, there exists $\lambda\in\Irr(L)$ in a $G/D$-orbit of size divisible by $q$. It follows from Clifford's correspondence that $q$ divides the degree of any character in $\Irr(G|\lambda)$. But since $\lambda$ belongs to the principal $p$-block of the $p$-group $L$, some of these characters belong to $B_p(G)$. This contradicts the hypothesis. 
 
Hence, we may assume that $L$ is not cyclic. It follows from \cite{atl} that $L=\CCC_p\times\CCC_p$ for $p=2$ or $3$.

Suppose that $p=2$. It follows that  $q=3$ and $G/D\cong\CCC_3$. We reach a contradiction as in the case $L$ cyclic. 

Finally, suppose that $p=3$, so that $G/D$ is isomorphic to a subgroup of $\GL_2(3)$. Hence $q=2$. Arguing as before, we may assume that every character in $\Irr(L)$ is invariant by a Sylow $3$-subgroup of $G/D$. It follows from \cite[Thms.~9.3 and 10.4]{mw} that $G/D\cong\GL_2(3)$. Note that $B_3(G/D)$ has an irreducible character of degree $2$. Since $\Irr(B_0(G/D))\subseteq\Irr(B_0(G))$, this contradicts the hypothesis. This completes the proof of the claim. 

We have thus seen that $L$ is central in $G$. By the inductive hypothesis, $G/L$ has a $p$-nilpotent Hall $\{p,q\}$-subgroup. Let $P\in\Syl_p(G)$ and $Q\in\Syl_q(G)$ be such that $PQ/L$ is a $p$-nilpotent Hall subgroup of $G/L$ (so $PQ$ is a Hall $\{p,q\}$-subgroup of $G$).  Therefore, $QL\trianglelefteq PQ$.   Since $L$ is central in $G$,  $Q$ is characteristic in $QL$ and  we conclude that $Q$ is normal in $PQ$, as desired.

 Suppose now that $Z=1$, so that $M=C\times E$. Therefore, $M/E\cong C$. Since $T/E$ is a $q'$-group  by Steps 4 and 5, we deduce using the Schur-Zassenhaus theorem again that $G/E$ is the semidirect product of $R/E$ acting on $T/E$, where $R/E$ is a Sylow $q$-subgroup of $G/E$. Therefore, $G/M$ is the semidirect product of the $q$-group  $RM/M$ acting on the $\{p,q\}'$-group $T/M$. Since $p$ does not divide $|T/M|$, $\Irr(B_p(T/M))=\Irr(T/M)$. Further, $B_p(G/M)\subseteq B_p(G)$, so $q$ does not divide the degree of any character in $\Irr(B_p(G/M))$. By Lemma \ref{copr}, we conclude that every character in $\Irr(T/M)$ is $G$-invariant. By coprime action (see \cite[Lem~3.2]{isa3}, for instance), this implies that the action of $RM/M$ on $T/M$ is trivial. In particular, $RM\trianglelefteq G$. By Lemma \ref{step1}, $q$ does not divide the degree of any character in $\Irr(B_p(RM))$.

Now, note that $RM/E$ is the semidirect product of the $q$-group $R/E$  acting on the $q'$-group $M/E$. 
 On the other hand, $M/E\cong C$ has a unique $p$-block, by Step 5,  so using Lemma \ref{copr} again, we obtain that all irreducible characters
 of $M/E$ are $R/E$-invariant. As before, this implies that 
$R/E$ acts trivially on $M/E$. We deduce that $R/E\cong RM/M$ acts trivially and coprimely on both $T/M$ and $M/E$. It follows  from \cite[Lem. ~4.29]{isa} that $R/E$ acts trivially on $T/E$. In particular, $R\trianglelefteq G$.
 Since $G/R\cong T/E$ is a $q'$-group and $G=\bO^{q'}(G)$, this implies that $G=R$.

 It follows that $G/E$ is a $q$-group. By Step 4, this implies that  $C=1$. Hence  $G$  is isomorphic to a subgroup  of $\Aut(X_1)\times\cdots\times\Aut(X_s)$ that contains $E=X_1\times\cdots\times X_s$.
 Therefore, if $Q_i/X_i\in\Syl_q(\Aut(X_i)/X_i)$ for every $i$, then $G$ is isomorphic to a subgroup of $Q_1\times\cdots\times Q_s$.

Let $\pi_i: G\longrightarrow Q_i$ be the restriction to $G$ of the projection from  $Q_1\times\cdots\times Q_s$ onto $Q_i$, so that $G/\Ker\pi_i\cong\pi_i(G)$ and $X_i\leq \pi_i(G)\leq Q_i$. Notice that $G$ is isomorphic to a subgroup of $\Gamma=\pi_1(G)\times\cdots\times\pi_s(G)$.

Since $B_p(G/\Ker\pi_i)\subseteq B_p(G)$, $q$ does not divide $\chi(1)$ for every $\chi\in\Irr(B_p(\pi_i(G)))$.

 By Theorem \ref{alt}, this implies that the socle of $\pi_i(G)$ is not an alternating group.
 In the remaining cases, when the socle of $\pi_i(G)$ is a sporadic group or a group of Lie type, Theorems \ref{sporadic} and \ref{lie} imply that $\pi_i(G)$ has a $p$-nilpotent Hall $\{p,q\}$-subgroup. Therefore, $\Gamma$ has a $p$-nilpotent Hall $\{p,q\}$-subgroup. Since $\Gamma/E$ is a $q$-group and $G/E\leq\Gamma/E$, we conclude that $G$ is subnormal in $\Gamma$. By Lemma \ref{sub}, it follows that $G$ has a $p$-nilpotent Hall $\{p,q\}$-subgroup.
 \end{proof}

Next, we conclude the proof of Theorem A, which we restate, with $p$ and $q$ interchanged. As mentioned in the Introduction, it is an easy consequence of Theorem C.

\begin{thm}
Let $G$ be a finite group, let $q$ be a prime and let $Q\in\Syl_q(G)$.
Then $Q\trianglelefteq G$ if and only if for every prime $p\neq q$ that divides $|G|$ and every $\chi\in\Irr(B_p(G))$, $q$ does not divide $\chi(1)$.
\end{thm}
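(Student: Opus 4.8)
The plan is to prove the two implications separately: the forward direction is a short piece of block-covering theory, and the reverse direction is where Theorem C does the work, combined with a Sylow-conjugation and order argument.

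For the ``only if'' direction I would argue as follows. Suppose $Q\trianglelefteq G$ and fix a prime $p\neq q$ dividing $|G|$. Since $q\neq p$, the normal subgroup $Q$ is a $p'$-group, so its principal $p$-block is $\{1_Q\}$. As $B_p(G)$ covers $B_p(Q)$, standard block-covering theory (cf.\ \cite{nav}) forces every $\chi\in\Irr(B_p(G))$ to lie over $1_Q$, i.e.\ $Q\leq\ker\chi$. Hence $\chi$ inflates from $G/Q$, and because $Q$ is a full Sylow $q$-subgroup we have $q\nmid|G/Q|$, so $q\nmid\chi(1)$. This settles the easy half.

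For the ``if'' direction, assume $q\nmid\chi(1)$ for every $\chi\in\Irr(B_p(G))$ and every prime $p\neq q$ dividing $|G|$; I must deduce $Q\trianglelefteq G$, and may assume $q\mid|G|$. Fix $Q\in\Syl_q(G)$ and set $N:=\norm{G}{Q}$. The key observation is that it suffices to show $N$ contains a full Sylow $p$-subgroup of $G$ for \emph{every} prime $p$: then $|G|_p\mid|N|$ for all $p$, whence $|N|=|G|$ and $Q\trianglelefteq G$. For $p=q$ this is immediate since $Q\leq N$. For a prime $p\neq q$ dividing $|G|$, I would apply Theorem C to produce a $p$-nilpotent Hall $\{p,q\}$-subgroup $H$ of $G$; its normal $p$-complement is a Sylow $q$-subgroup $U$ of $G$ that is normalized by a Sylow $p$-subgroup $P_0$ of $G$. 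Writing $U=Q^{g}$ for a suitable $g\in G$, the conjugate $P_0^{g^{-1}}$ is a Sylow $p$-subgroup of $G$ normalizing $Q$, so $P_0^{g^{-1}}\leq N$. This supplies the required Sylow $p$-subgroup inside $N$ for each $p$ and finishes the proof.

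The one point needing care is the hypothesis of Theorem C that excludes the five exceptional composition factors, and this is where I expect the main obstacle to lie. I would dispatch it by showing such a composition factor is incompatible with the standing (all-primes) hypothesis: if $(S,p,q)$ is one of the five exceptional triples and $S$ were a composition factor of $G$, then inspection of the first part of Theorem \ref{sporadic} exhibits a further prime $p'\neq q$ (for instance $p'=7$ when $S=J_1$ and $q=3$, $p'=2$ when $S=M_{11}$ and $q=3$, $p'=3$ when $S=M_{22}$ and $q=2$, etc.) for which $(S,p',q)$ is \emph{not} excluded, so that $B_{p'}(S)$ contains a character of degree divisible by $q$. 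Propagating this character upward through the subnormal structure of $G$, via the contrapositive of Lemma \ref{step1} together with block covering, yields a character in $B_{p'}(G)$ of degree divisible by $q$, contradicting our hypothesis. Thus under the hypothesis none of the exceptional composition factors occur and Theorem C is applicable for every $p\neq q$. Making this ``propagation from a simple section up to $G$'' step fully rigorous is the delicate part; everything else is a routine Sylow argument.
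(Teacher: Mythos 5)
Your proposal is correct and takes essentially the same approach as the paper's proof: the same kernel/covering argument for the forward direction, the same application of Theorem C together with Sylow conjugacy to conclude $G=\bN_G(Q)$, and the same elimination of the exceptional composition factors by exhibiting an alternate prime $p'$ with a character of degree divisible by $q$ in $B_{p'}(S)$. The only cosmetic difference is that you propagate that character upward through a composition series (where the quotient step actually needs block domination, i.e.\ $\Irr(B_{p'}(S))\subseteq\Irr(B_{p'}(M))$ via inflation, rather than block covering), whereas the paper equivalently passes the hypothesis downward to composition factors via Lemma \ref{step1} and inflation.
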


 \begin{proof}
 Suppose first that $Q$ is a normal Sylow $q$-subgroup of $G$. Then for every prime $p\neq q$, $Q\subseteq\bO_{p'}(G)$ so $\Irr(B_p(G))\subseteq\Irr(G/Q)$ is a set of characters of $q'$-degree. Conversely, suppose that for every $p\neq q$, $q$ does not divide $\chi(1)$ for every $\chi\in\Irr(B_p(G))$. Let $Q\in\Syl_q(G)$.  Assume first that $G$ does not have composition factors isomorphic to any of the 4 sporadic groups listed in Theorem \ref{sporadic}. Then Theorem C implies that for any $p\neq q$ a  Sylow $p$-subgroup of $G$ normalizes $Q$. Thus $p$ does not divide $|G:\bN_G(Q)|$ for any prime $p\neq q$ and we conclude that $G=\bN_G(Q)$, as wanted. Finally, assume that $G$ has some composition factor $S$ isomorphic to $J_1, J_4, M_{11}$ or $M_{22}$. Since the hypothesis is inherited by factor groups and by normal subgroups, it is also inherited by composition factors. Therefore, for any $p\neq q$, $q$ does not divide $\chi(1)$ for every $\chi\in\Irr(B_p(S))$. This contradicts Theorem \ref{sporadic}. (For instance, if $S=J_1$ and $q=3$, we can take $p=7$.)
 \end{proof}

Finally, we prove Corollary B.

\begin{cor} \label{cor:blockitomich}
Let $G$ be a finite group and let $p$ be a prime. Then $G$ has a normal and abelian Sylow $p$-subgroup if and only if $p$ does not divide $\chi(1)$ for every $\chi$ that belongs to some principal block for some prime divisor of $|G|$.
\end{cor}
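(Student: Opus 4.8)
The plan is to obtain this statement directly by combining the two characterizations that the paper has already established, after splitting the hypothesis according to whether the relevant block prime equals $p$ or not. Write $P\in\Syl_p(G)$. The condition that $p\nmid\chi(1)$ for every $\chi$ lying in some principal block $B_\ell(G)$ with $\ell\mid|G|$ amounts to controlling the full union $\bigcup_{\ell\mid|G|}\Irr(B_\ell(G))$, and I would partition this union into its $\ell=p$ piece and its $\ell\neq p$ piece. The hypothesis is then equivalent to the conjunction of two assertions: that $p\nmid\chi(1)$ for all $\chi\in\Irr(B_p(G))$ (the $\ell=p$ piece), and that $p\nmid\chi(1)$ for all $\chi\in\Irr(B_\ell(G))$ as $\ell$ ranges over the primes $\ell\neq p$ dividing $|G|$ (the $\ell\neq p$ piece).

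Next I would identify each piece with a known biconditional. The $\ell=p$ piece is exactly the principal block case of Brauer's height zero conjecture \cite{mn2}: it holds if and only if $P$ is abelian. The $\ell\neq p$ piece is precisely the hypothesis appearing in Theorem \ref{thm:normalsylow}, with the primes matched so that $p$ plays the role of the Sylow prime; by that theorem it holds if and only if $P\trianglelefteq G$. Since ``normal and abelian Sylow $p$-subgroup'' means exactly ``$P\trianglelefteq G$ and $P$ abelian'', the desired equivalence follows by taking the conjunction of these two biconditionals. Concretely, for the forward direction, a normal abelian Sylow $p$-subgroup gives the $\ell=p$ conclusion through abelianness via \cite{mn2} and the $\ell\neq p$ conclusion through normality via Theorem \ref{thm:normalsylow}; conversely, the full degree condition restricts to each of the two sub-conditions, simultaneously forcing $P$ to be abelian and $P\trianglelefteq G$.

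I expect no genuine obstacle here, since all of the representation-theoretic substance already resides in the two cited inputs. The only point requiring care is the bookkeeping of the partition of $\bigcup_{\ell\mid|G|}\Irr(B_\ell(G))$ into its $\ell=p$ and $\ell\neq p$ parts, together with the observation that abelianness of $P$ governs the former while normality of $P$ governs the latter. Once this correspondence is made explicit, the statement is immediate, and the edge case $p\nmid|G|$ (where $P$ is trivial, hence normal and abelian, and every block character automatically has $p'$-degree) is absorbed without comment by both invoked results.
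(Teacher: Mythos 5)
Your proof is correct and is essentially the paper's own argument: the paper's proof of this corollary is precisely the one-line combination of Theorem \ref{thm:normalsylow} (governing normality via the principal $\ell$-blocks for $\ell\neq p$) with Brauer's height zero conjecture for principal blocks \cite{mn2} (governing abelianness via the principal $p$-block). Your version merely spells out the partition of $\bigcup_{\ell}\Irr(B_\ell(G))$ explicitly, which is fine but adds nothing beyond bookkeeping.
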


 \begin{proof}
 This follows from Theorem A and Brauer's height zero conjecture for principal blocks \cite{mn2}. 
 \end{proof} 

We remark that the union of the irreducible characters in the principal blocks  of a finite group $G$ is usually a proper subset of $\Irr(G)$ (see  \cite[Thm.~3.7]{bz} and  \cite[Thm.~1]{har}). It was proved in \cite{bz} however that if $G$ is simple then $\AAA_{11}$ and $\AAA_{13}$ are the unique simple groups with this property.

\section{Further directions}

  It makes sense to ask what happens if we just consider characters of height zero (as was done for instance in \cite{nrs}, refining the conjecture in \cite{lwxz}). For instance, if $G=\PSL_3(5)$ or $\SSS_5$, $p=2$ and $q=3$, then $q$ does not divide the degree of any irreducible character of $p$-height zero in $B_p(G)$ but $G$ does not have a $p$-nilpotent Hall $\{p,q\}$-subgroup. It may be worth remarking that there do not seem to be many counterexamples among simple groups, and $\PSL_3(5)$ is the smallest one.

We do not know any counterexamples to the following.

\begin{con}
Let $G$ be a finite group and let $q$ be a prime. Then $G$ has a normal Sylow $q$-subgroup if and only if
for any $p\neq q$, $q$ does not divide the degree of any $p$-height zero irreducible character in $B_p(G)$.
\end{con}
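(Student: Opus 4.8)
The plan is to prove both directions, treating the forward (``only if'') direction as essentially formal and concentrating all the work on the converse, which is the genuine strengthening of Theorem~\ref{thm:normalsylow}. For the principal block $B_p(G)$ the defect group is a full Sylow $p$-subgroup, so a character of $B_p(G)$ has height zero exactly when it has $p'$-degree; thus the hypothesis reads: for every prime $p\neq q$, every $\chi\in\Irr(B_p(G))$ with $p\nmid\chi(1)$ also satisfies $q\nmid\chi(1)$. If $Q\in\Syl_q(G)$ is normal, then $Q\leq\oh{p'}{G}$ for each $p\neq q$, so $\Irr(B_p(G))\subseteq\Irr(G/Q)$ consists of $q'$-degree characters and the forward direction is immediate, exactly as in the proof of Theorem~\ref{thm:normalsylow}. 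Since the height-zero characters form a subset of $\Irr(B_p(G))$, our hypothesis is \emph{weaker} than that of Theorem~\ref{thm:normalsylow}, so the converse is strictly harder, and I would only attempt it under the solvability assumption stated by the authors.

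For solvable $G$ I would argue by induction on $|G|$, first securing the two stability properties that drive such inductions. Stability under quotients is clean: if $N\nor G$, then $\Irr(B_p(G/N))\subseteq\Irr(B_p(G))$ by inflation, inflation preserves degrees, and a height-zero character of $B_p(G/N)$ is of $p'$-degree, hence height zero in $B_p(G)$; so the hypothesis passes to $G/N$. Stability under normal subgroups is the delicate point and requires a height-zero analogue of Lemma~\ref{step1}: given $\theta\in\Irr(B_p(N))$ of $p'$-degree, one must produce $\chi\in\Irr(B_p(G)\mid\theta)$ that is again of $p'$-degree, so that $q\nmid\chi(1)$ forces $q\nmid\theta(1)$. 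For $p$-solvable groups this is available through the block-covering theory together with Isaacs' theory of $B_\pi$- and $\pi$-special characters, which controls the $p$-part of $\chi(1)/\theta(1)$ under Clifford reduction. Granting these, and using that $G$ is solvable so that $\bE(G)=1$ and $\bF^*(G)=\bF(G)$ with $\cent{G}{\bF(G)}\leq\bF(G)$, I would reduce, following the simplified version of the generalized Fitting analysis in the proof of Theorem~\ref{thm:pnilphall}, to the study of the coprime action of a $p'$-complement on a normal $p$-subgroup.

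The core of the argument would then be a coprime-action statement at the level of height-zero characters, i.e. a strengthening of Theorem~\ref{main}: if a $q$-group $A$ acts coprimely on the relevant $q'$-section and every \emph{height-zero} character of the associated principal $p$-block is $A$-invariant, then $A$ centralizes the corresponding Sylow $p$-subgroup. Over solvable groups I would prove this by combining the $p$-solvable Alperin--McKay correspondence (Okuyama--Wajima, Dade), which transports $\Irr_{p'}(B_p(G))$ to $\Irr_{p'}(B_p(\norm{G}{P}))$, with the explicit Clifford theory of $\norm{G}{P}$, where $P$ is now a \emph{normal} Sylow $p$-subgroup and a $p'$-complement acts faithfully on it. The invariance of all $p'$-degree characters then translates, via Lemma~\ref{copr} applied inside $\norm{G}{P}$ and the It\^o--Michler theorem, into triviality of the action; carrying this out for every prime $p\neq q$ shows that some Sylow $p$-subgroup normalizes $Q$ for each such $p$, whence $\norm{G}{Q}$ has index prime to every prime and $Q\nor G$. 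Theorem~\ref{thm:pnilphall} (equivalently Theorem~\ref{thm:normalsylow}) supplies the two-primes input at any stage of the reduction where the full principal block, rather than only its height-zero part, becomes available.

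The main obstacle is precisely this height-zero coprime-action step: unlike $\Irr(B_p(G))$, the set $\Irr_{p'}(B_p(G))$ is not visibly well behaved under passage to normal subgroups nor under coprime automorphisms, and making it so is essentially the content of (consequences of) the Alperin--McKay conjecture. This is why the general case is out of reach here: it would require an inductive, McKay-type condition on the simple groups that is not currently known, whereas for solvable groups Alperin--McKay is a theorem and Isaacs' $\pi$-special machinery provides exactly the degree control needed to run the induction.
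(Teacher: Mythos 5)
Your forward direction and your decision to attempt only the solvable case both match the paper (the general statement is left there as a conjecture), but the inductive route you propose for the converse breaks at exactly the step you flag as delicate, and it breaks because that step is \emph{false}, not merely unproven. The ``height-zero analogue of Lemma~\ref{step1}'' you require --- if $N\trianglelefteq G$ and $\theta\in\Irr(B_p(N))$ has $p'$-degree, then some $\chi\in\Irr(B_p(G))$ over $\theta$ has $p'$-degree --- already fails for $G=D_8$, which is certainly $p$-solvable, so no appeal to Isaacs' $B_\pi$/$\pi$-special theory can repair it: take $N$ a noncentral Klein four subgroup and $\theta\in\Irr(N)$ linear and nontrivial on $\zent{G}$; then $\theta$ is not $G$-invariant, the unique character of $G$ over $\theta$ has degree $2$, and $B_2(G)=\Irr(G)$. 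The failure is not an artifact of $p$-groups: for $G=C_2^4\rtimes F_{20}$, where the Frobenius group $F_{20}$ of order $20$ acts on $\FF_{16}$ by multiplication by fifth roots of unity and by the Frobenius map, one checks $\bO_{2'}(G)=1$ and $\bF^*(G)=\bO_2(G)$ is self-centralizing, so $G$ has a unique $2$-block; yet a linear character of $N=\bO_2(G)$ lying in the orbit of length $10$ has no odd-degree character of $G$ above it. The crux is that the $G$-orbit of a $p'$-degree character of $N$ can have length divisible by $p$. Consequently the hypothesis does not visibly pass to normal subgroups, your induction cannot start, and the subsequent height-zero coprime-action strengthening of Theorem~\ref{main} never comes into play. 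This lack of control over $\Irr_{p'}(B_p(G))$ is precisely why the general statement remains a conjecture.

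The paper's proof of the solvable (in fact $\{p,q\}$-solvable) case needs none of this machinery and is essentially a translation. Since $G$ is $p$-solvable, $\Irr(B_p(G))=\Irr(G/\bO_{p'}(G))$ by Fong's theorem \cite[Thm.~10.20]{nav}; as the principal block has full defect, the hypothesis becomes: every irreducible character of $\overline{G}=G/\bO_{p'}(G)$ of $p'$-degree has $q'$-degree. This is exactly the hypothesis of Theorem~A of \cite{nw2} for the $\{p,q\}$-separable group $\overline{G}$, which yields that a Sylow $p$-subgroup of $\overline{G}$ normalizes a Sylow $q$-subgroup of $\overline{G}$; pulling back along the normal $p'$-subgroup $\bO_{p'}(G)$ (the standard argument from Step~1 of the paper's proof of Theorem~\ref{thm:pnilphall}) gives the same for $G$. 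Applying this to every prime $p\neq q$ dividing $|G|$ forces $|G:\norm{G}{Q}|=1$, exactly as in the proof of Theorem~\ref{thm:normalsylow}. The lesson is that $p$-solvability should be used to convert the block-theoretic, height-zero hypothesis into an ordinary character-degree hypothesis on a quotient group, where the needed theorem (Navarro--Wolf) already exists, rather than to run a new induction at the level of height-zero characters in blocks.
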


We can prove this conjecture, and the $p$-height zero version of Theorem C, for solvable groups.

\begin{pro}
Let $p$ and $q$ be two different primes.
Let $G$ be a $\{p,q\}$-solvable group. If $q$ does not divide the degree of any $p$-height zero irreducible character in $B_p(G)$, then  a Sylow $p$-subgroup of $G$ normalizes a Sylow $q$-subgroup of $G$. In particular, if $G$ is solvable then  $G$ has a normal Sylow $q$-subgroup if and only if for every prime $p\neq q$ that divides $|G|$, $q$ does not divide $\chi(1)$ for every $\chi\in\Irr(B_p(G))$ of $p$-height zero.
\end{pro}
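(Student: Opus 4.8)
The plan is to begin with two simplifications. Since the defect group of the principal block $B_p(G)$ is a Sylow $p$-subgroup, a character of $B_p(G)$ has $p$-height zero exactly when it has $p'$-degree; thus the hypothesis reads: every $\chi\in\Irr(B_p(G))$ of $p'$-degree has $q'$-degree. Moreover the final ``if and only if'' reduces to the first assertion. If $Q\trianglelefteq G$ then $Q\le\oh{p'}{G}$ for every $p\neq q$, so every character of $B_p(G)$ is inflated from $G/Q$ and hence has $q'$-degree. Conversely, applying the first assertion to each prime $p\neq q$ dividing $|G|$ shows that some Sylow $p$-subgroup normalizes some Sylow $q$-subgroup, so by conjugacy $p\nmid|G:\bN_G(Q)|$; as this holds for all $p\neq q$ and $q\nmid|G:\bN_G(Q)|$ trivially, we get $Q\trianglelefteq G$. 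So I would concentrate on the first statement and argue by induction on $|G|$.

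Next I would isolate the feature that makes the height-zero hypothesis behave exactly like the full hypothesis of Theorem C in the places where it is used: a height-zero analogue of Lemma \ref{copr}. Suppose $G=A\ltimes N$ with $A$ a $q$-group, $N$ a $q'$-group, and that every $p'$-degree $\chi\in\Irr(B_p(G))$ has $q'$-degree. If $\theta\in\Irr(B_p(N))$ has $p'$-degree but is not $A$-invariant, then choosing $\chi\in\Irr(B_p(G))$ over $\theta$ via \cite[Thm.~9.4]{nav}, the fact that $G/N$ is a $q$-group forces $\chi(1)/\theta(1)$ to divide $|G:N|$, a $q$-power, and to be divisible by $|G:G_\theta|>1$; hence $\chi$ still has $p'$-degree but is divisible by $q$, a contradiction. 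Thus every $p'$-degree $\theta\in\Irr(B_p(N))$ is $A$-invariant. The crucial point is that, because the acting group is a $q$-group, every orbit and ramification factor is a $q$-power and never disturbs the $p$-part, so height-zero characters stay height-zero. The same remark gives a height-zero form of Lemma \ref{step1} (using the existence of a height-zero character of $B_p(G)$ over a given height-zero character of a normal subgroup, valid for $p$-solvable groups) and lets the coprime-action arguments behind Theorem \ref{main} be used verbatim.

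With these in hand I would follow the skeleton of the proof of Theorem C. As there, one reduces to $\oh{p'}{G}=1$ (a quotient reduction, where inflation preserves the principal block and $p'$-degrees) and to $\bO^{q'}(G)=G$ (using the height-zero form of Lemma \ref{step1} and \cite[Lem.~2.1]{lwwz} to lift). Since $G$ is solvable, $\bE(G)=1$, so $\bF^*(G)=\oh p G$ and $\bC_G(\oh p G)\le\oh p G$ by \cite[Thm.~9.8]{isa2}; hence $G$ has a unique $p$-block by \cite[Cor.~V.3.11]{fei}, and the hypothesis collapses to the purely character-theoretic condition $\Irr_{p'}(G)\subseteq\Irr_{q'}(G)$. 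The $\{p,q\}$-solvable case proceeds along the same lines, the only new feature being $\{p,q\}'$-components, which carry neither $p$ nor $q$ and are therefore inert for the statement.

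The hard part is exactly the final step. For the full hypothesis this was immediate from the It\^o--Michler theorem, since a unique $p$-block would make \emph{all} of $\Irr(G)$ have $q'$-degree; here we only control the $p'$-degree characters, and It\^o--Michler does not apply. I would instead prove the needed relative It\^o--Michler statement --- that a solvable group with $\bC_G(\oh p G)\le \oh p G$ and $\Irr_{p'}(G)\subseteq\Irr_{q'}(G)$ has a Sylow $p$-subgroup normalizing a Sylow $q$-subgroup --- by a secondary induction on a minimal normal (necessarily $p$-)subgroup $L$. Passing to $G/L$ is a quotient, and for the stabilizer analysis I would let a Sylow $q$-subgroup $Q$ act coprimely on $L$ and invoke the Glauberman correspondence (again a $q$-group action, so $p'$-degrees are preserved) to relate $p'$-degree characters of $G$ over $Q$-invariant $\lambda\in\Irr(L)$ to characters of $\bC_G(L)$-type sections, manufacturing a $p'$-degree character of degree divisible by $q$ whenever $Q$ moves a suitable $\lambda$. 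The height-zero hypothesis then forces $Q$ to act trivially on the relevant sections, and the height-zero analogue of Lemma \ref{copr} together with the coprime-action argument in the spirit of Theorem \ref{main} and \cite[Lem.~4.29]{isa} yields that a Sylow $p$-subgroup normalizes $Q$. I expect the main obstacle to be precisely the bookkeeping on the $p$-part: arranging that the witnessing characters of divisible-by-$q$ degree can be taken of $p'$-degree, which is what the Glauberman correspondence, and the fact that every relevant action is by a $q$-group, are designed to guarantee.
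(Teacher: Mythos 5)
There is a genuine gap, and it sits exactly where you say the ``hard part'' is. After your (correct) reductions, what remains to prove is: a solvable group $G$ with $\Irr_{p'}(G)\subseteq\Irr_{q'}(G)$ has a Sylow $p$-subgroup normalizing a Sylow $q$-subgroup. You do not prove this; you outline a plan (``I would instead prove the needed relative It\^o--Michler statement\ldots'', ``I expect the main obstacle\ldots'') involving Glauberman correspondence and unspecified bookkeeping. But this statement is the entire content of the proposition --- it is precisely Theorem~A of Navarro--Wolf \cite{nw2}, a published theorem with a substantial proof of its own. The paper's argument is two lines: since $G$ is $p$-solvable, $\Irr(B_p(G))=\Irr(G/\bO_{p'}(G))$ by \cite[Thm.~10.20]{nav}, height zero in $B_p(G)$ means $p'$-degree, so the hypothesis says every $p'$-degree character of $G/\bO_{p'}(G)$ has $q'$-degree, and then \cite[Thm.~A]{nw2} applies; the ``in particular'' part follows as in the proof of Theorem~A of the paper. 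Your proposal reconstructs the easy translation and then stops where the real work begins, so as written it is not a proof.

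There is also a concrete false step among your intermediate tools. You justify a ``height-zero form of Lemma \ref{step1}'' by asserting that, in a $p$-solvable group, over any height-zero character of $B_p(N)$ with $N\trianglelefteq G$ there exists a height-zero character of $B_p(G)$. This fails already for $G=\DDD_8$ (a $2$-group, hence $2$-solvable with a unique block), $N=\CCC_4$, and $\theta$ a faithful linear character of $N$: the stabilizer of $\theta$ is $N$, so the only character of $G$ above $\theta$ is $\theta^G$ of degree $2$, which has height one. The going-up of height-zero characters is fine when $|G:N|$ is prime to $p$ (that is why your analogue of Lemma \ref{copr} for a $q$-group acting on a $q'$-group is sound), but your reduction steps --- for instance passing to a normal subgroup of $q'$-index to get $\bO^{q'}(G)=G$ --- apply it to normal subgroups whose index may well be divisible by $p$, where it breaks down. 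So even the inductive skeleton you borrow from Theorem C is not justified under the weaker height-zero hypothesis.
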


\begin{proof}
Since $G$ is $p$-solvable, $\Irr(B_p(G))=\Irr(G/\bO_{p'}(G))$. Therefore, the hypothesis implies that $q$ does not divide the degree of any irreducible character of $p'$-degree of $G/\bO_{p'}(G)$. Now, the first part follows from   Theorem A of \cite{nw2}.  For the second part we can argue as in the proof of Theorem A.
\end{proof}

Recently, along with Malle and Rizo, we have proposed a strengthening of Brauer's height zero conjecture with Galois automorphisms \cite{mmrs}. We think that it should be possible to strengthen Theorem A in a similar way. Given a prime $p$,  let $\cJ_p$ be the subgroup of $\Gal(\QQ^{\rm{ab}}/\QQ)$  consisting of the automorphisms of  order $p$ that fix all $p$-power order roots of unity. Given a group $G$, let $\Irr_{\cJ_p}(B_p(G))$ be the set of $\cJ_p$-invariant irreducible characters of the principal $p$-block of $G$.

\begin{con}
Let $G$ be a finite group and let $q$ be a prime. Then $G$ has a normal Sylow $q$-subgroup if and only if
for any $p\neq q$ that divides $|G|$, $q$ does not divide $\chi(1)$ for every $\chi\in\Irr_{\cJ_p}(B_p(G))$.
\end{con}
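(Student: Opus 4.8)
The plan is to mirror the proof of Theorem~A, tracking $\cJ_p$-invariance through every step. The ``only if'' direction requires no refinement at all: if $Q\trianglelefteq G$ then $Q\subseteq\bO_{p'}(G)$ for every prime $p\neq q$, so $\Irr(B_p(G))\subseteq\Irr(G/Q)$ consists of $q'$-degree characters, and in particular so does the subset $\Irr_{\cJ_p}(B_p(G))$. The content is the converse. Just as Theorem~A was deduced from Theorem~C, I would deduce it from a \emph{Galois-refined Theorem~C}: if $q\nmid\chi(1)$ for every $\chi\in\Irr_{\cJ_p}(B_p(G))$ (and $G$ avoids the five exceptional composition factors of Theorem~\ref{sporadic}), then $G$ has a $p$-nilpotent Hall $\{p,q\}$-subgroup. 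Granting this for each prime $p\neq q$ dividing $|G|$, a Sylow $p$-subgroup would normalize $Q$, so $p\nmid |G:\norm{G}{Q}|$ for all such $p$ and hence $Q\trianglelefteq G$, exactly as in the proof of Theorem~A.

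The first target would be the solvable case, paralleling the height-zero Proposition above. If $G$ is $p$-solvable then $\Irr(B_p(G))=\Irr(G/\bO_{p'}(G))$, so the hypothesis says precisely that every $\cJ_p$-invariant irreducible character of $\overline{G}:=G/\bO_{p'}(G)$ has $q'$-degree; one then wants a Galois form of the It\^o--Michler theorem \cite{mich86} guaranteeing that $\overline{G}$ has a normal Sylow $q$-subgroup under this weaker hypothesis. This statement is essentially the unique-$p$-block case of the conjecture, and it is where I expect a genuinely new ingredient to be required; it should be settled first (for solvable, and then general $G$ with $\bC_G(\bO_p(G))\leq\bO_p(G)$) using the $p$-rationality machinery of \cite{mmrs}.

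For the general case I would rerun the reduction in the proof of Theorem~C essentially verbatim, with two modifications. First, every appeal to an almost-simple or quasisimple statement (Lemma~\ref{lem:exceptinitial}, Proposition~\ref{prop:Lieinitial}, Theorem~\ref{quasi}, and the two tables) that produces a character $\chi\in\irr{B_p(\cdot)}$ with $q\mid\chi(1)$ must instead produce one that is moreover $\cJ_p$-invariant. The two families used are semisimple and unipotent characters. For a semisimple character $\chi_s$ attached to the class of a $p$-element $s$ of the dual group, $\cJ_p$-invariance is automatic: an element $\sigma\in\cJ_p$ fixes all $p$-power roots of unity and so sends $\chi_s$ to the semisimple character of the Galois-twisted class, which for a $p$-element $s$ is again the class of $s$; hence $\chi_s^\sigma=\chi_s$. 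Since almost all of the detecting characters in those proofs are exactly such semisimple characters, these cases transfer immediately. The unipotent characters in Tables~\ref{tab:except1} and~\ref{tab:unipcores} are rational-valued apart from a short, explicit list, and the remaining non-rational ones have fields of values meeting the $p$-cyclotomic field trivially, so they too are $\cJ_p$-fixed; this is a finite bookkeeping check layered onto the existing casework.

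Second, the Clifford-theoretic glue of the reduction --- Lemmas~\ref{step1} and~\ref{copr}, together with the block-covering statement \cite[Thm.~9.4]{nav} --- must be made $\cJ_p$-equivariant, so that a $\cJ_p$-invariant character of a subnormal subgroup or quotient lifts to a $\cJ_p$-invariant character of $B_p(G)$ whose degree is divisible by $q$. The inputs are that induction, Gallagher's theorem, and the covering bijection of \cite{nav} all commute with the $\cJ_p$-action, and that when a $\cJ_p$-fixed constituent is present one may select a $\cJ_p$-fixed character lying above it, since $\cJ_p$ is cyclic of order $p$ and acts on a finite set. I expect the main obstacle to lie precisely in these last two ingredients: establishing a clean Galois--It\^o--Michler theorem adapted to $\cJ_p$-invariant degrees for the unique-block base case, and verifying the $\cJ_p$-compatibility of the block-covering and Clifford correspondences in the reduction. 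By contrast, the simple-group casework should present no essential difficulty, since the detecting characters are either semisimple at $p$-elements (automatically $\cJ_p$-fixed) or rational-valued unipotent characters.
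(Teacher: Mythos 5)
Your ``only if'' direction and your solvable-case outline are fine, but you should be aware that the statement you are attacking is left \emph{open} in the paper: it appears there as a conjecture, and the only case the paper settles is the ($p$-)solvable one, in the proposition immediately following it. That proof is exactly your ``first target'': $p$-solvability gives $\Irr(B_p(G))=\Irr(G/\bO_{p'}(G))$, the Galois form of the It\^o--Michler theorem that you say ``should be settled first'' is precisely what the paper invokes as \cite[Thm 3]{mmrs}, and then coprime action (\cite[Thm.~3.23]{isa2}) produces a $P$-invariant Sylow $q$-subgroup, with normality of $Q$ for solvable $G$ following as in the proof of Theorem \ref{thm:normalsylow}. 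Your plan for the general case, however, is not a proof: the three ingredients you defer --- a Galois It\^o--Michler statement valid in the generality needed for the base case $\bC_G(\bO_p(G))\leq\bO_p(G)$, a $\cJ_p$-equivariant form of Lemmas \ref{step1} and \ref{copr} and of block covering, and the $\cJ_p$-invariance of the detecting characters in the simple-group analysis --- are exactly the open content of the conjecture. Labelling them ``expected obstacles'' does not discharge them; this is why the paper states a conjecture rather than a theorem.

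Beyond incompleteness, two specific assertions in your sketch are false as stated. First, $\cJ_p$ is not cyclic of order $p$: it is the (elementary abelian, pro-$p$) subgroup of $\Gal(\QQ^{\mathrm{ab}}/\QQ)$ of automorphisms of order dividing $p$ fixing all $p$-power roots of unity, and even after passing to its finite image acting on $\Irr(G)$, your fixed-point argument for selecting a $\cJ_p$-fixed member of $\Irr(G|\theta)\cap\Irr(B_p(G))$ above a $\cJ_p$-fixed $\theta$ requires that set to have cardinality prime to $p$, which is not given. Second, and more seriously, $\cJ_p$-invariance of semisimple characters attached to $p$-elements is \emph{not} automatic for the simply connected groups $H=\bH^F$ used in Lemma \ref{lem:exceptinitial} and Proposition \ref{prop:Lieinitial}: when $\zent{\bH}$ is disconnected, the rational Lusztig series $\mathcal{E}(H,s)$ contains several semisimple characters, and elements of $\cJ_p$ may permute them even though they fix the series. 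Concretely, for $H=\SL_2(q)$ with $q\equiv 3 \pmod 4$ and $p=2$, the two semisimple characters of degree $(q+1)/2$ in the series of the involution of the dual group $\PGL_2(q)$ have field of values $\QQ(\sqrt{-q})$, and the element of $\cJ_2$ inverting $q$-th roots of unity (and fixing all roots of unity of order prime to $q$) interchanges them. So your claim that the semisimple-character casework ``transfers immediately'' fails; controlling precisely this kind of $p$-rationality is the subject of \cite{mmrs}, and it is where the genuine difficulty of the conjecture lies.
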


Again, we can prove this conjecture for solvable groups.

\begin{pro}
Let $G$ be a $p$-solvable group. If $q$ does not divide the degree of any character in $\Irr_{\cJ_p}(B_p(G))$ then  a Sylow $p$-subgroup of $G$ normalizes a Sylow 
$q$-subgroup of $G$. In particular, if $G$ is solvable, then $G$ has a normal Sylow $q$-subgroup if and only if for every prime $p\neq q$ that divides $|G|$, $q$ does not divide $\chi(1)$ for every $\chi\in\Irr_{\cJ_p}(B_p(G))$.
\end{pro}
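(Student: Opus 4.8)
The plan is to follow the pattern of the two preceding solvable-group results, reducing to a single factor group in which the principal block is the whole group. Since $G$ is $p$-solvable, by \cite[Thm.~10.20]{nav} we have $\Irr(B_p(G))=\Irr(G/\bO_{p'}(G))$. Writing $H:=G/\bO_{p'}(G)$, so that $\bO_{p'}(H)=1$, this identifies $\Irr_{\cJ_p}(B_p(G))$ with the set of $\cJ_p$-invariant irreducible characters of $H$, and the hypothesis becomes: $q$ does not divide $\chi(1)$ for any $\cJ_p$-invariant $\chi\in\Irr(H)$. Because $\bO_{p'}(G)$ is a $p'$-group, it suffices to show that a Sylow $p$-subgroup of $H$ normalizes a Sylow $q$-subgroup of $H$ and then pull this configuration back to $G$.

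The heart of the matter is therefore a Galois analogue of the two-prime Navarro--Wolf theorem used in the $p$-height-zero version above: for a $p$-solvable group $H$ with $\bO_{p'}(H)=1$, if $q$ divides the degree of no $\cJ_p$-invariant irreducible character of $H$, then a Sylow $p$-subgroup of $H$ normalizes a Sylow $q$-subgroup. I would prove this by contraposition, constructing, whenever no Sylow $p$-subgroup of $H$ normalizes a Sylow $q$-subgroup, an irreducible character of $H$ that is simultaneously $\cJ_p$-invariant \emph{and} of degree divisible by $q$. To control both properties at once I would work along a chief series of $H$ and exploit the canonical, Galois-equivariant correspondences available for $p$-solvable groups --- the $\pi$-special factorization of Isaacs and the associated $B_{p'}$-theory --- which interact well with $\cJ_p$, since the elements of $\cJ_p$ fix $p$-power roots of unity and hence respect the $p$/$p'$ splitting of character values. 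The divisibility of the degree by $q$ would be forced exactly as in \cite{nw2}, by locating a $q$-subgroup on which a Sylow $p$-subgroup acts without the relevant common fixed configuration and inducing a suitable linear character; the genuinely new content is arranging that this inducing datum, and hence the resulting character, can be chosen $\cJ_p$-fixed.

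With the first part in hand, the \emph{In particular} statement for solvable $G$ follows just as in the proof of Theorem~A. For the forward implication, if $Q\trianglelefteq G$ is a normal Sylow $q$-subgroup then for each prime $p\neq q$ we have $Q\le\bO_{p'}(G)$, so $\Irr(B_p(G))=\Irr(G/\bO_{p'}(G))\subseteq\Irr(G/Q)$ consists of $q'$-degree characters, and a fortiori so do the $\cJ_p$-invariant ones. Conversely, assuming the degree condition for every prime $p\neq q$ dividing $|G|$, the first part supplies, for each such $p$, a Sylow $p$-subgroup normalizing $Q\in\Syl_q(G)$; hence $p\nmid|G:\bN_G(Q)|$ for every $p\neq q$, while $q\nmid|G:\bN_G(Q)|$ trivially, forcing $\bN_G(Q)=G$.

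I expect the main obstacle to be the Galois analogue of Navarro--Wolf itself, and specifically the requirement that the character witnessing divisibility by $q$ be $\cJ_p$-invariant. The difficulty is that $\cJ_p$-invariance is a real constraint rather than an automatic consequence of having $q'$-degree: a $q'$-degree character need not be $\cJ_p$-fixed, as a nontrivial character of a cyclic group of order $\ell$ with $p\mid \ell-1$ already shows. Consequently one cannot simply import a witnessing character from the non-Galois argument; the construction must be carried out compatibly with the Galois descent, which is precisely where the $\pi$-special machinery and the coprime-action tools (cf. \cite[Lem.~3.2]{isa3}) would do the essential work.
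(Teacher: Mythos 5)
Your reduction and endgame match the paper's: pass to $H:=G/\bO_{p'}(G)$, where $\Irr(B_p(G))=\Irr(H)$ by $p$-solvability, and handle the ``in particular'' statement exactly as in Theorem A. The genuine gap is the core step, which you yourself flag as ``the main obstacle'': you never prove the Galois analogue of the Navarro--Wolf theorem that your argument needs. What you offer is a program --- contraposition, chief series, Isaacs' $\pi$-special factorization, Galois-equivariant inducing data --- but the crucial difficulty, producing a character that is simultaneously $\cJ_p$-invariant and of degree divisible by $q$, is precisely the hard content, and saying that the divisibility ``would be forced exactly as in \cite{nw2}'' while the invariance is ``the genuinely new content'' to be ``arranged'' is not an argument. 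As your own cyclic-group example shows, $\cJ_p$-invariance is a real constraint, and no reason is given why the Navarro--Wolf construction can be made Galois-equivariant; this is a nontrivial theorem, not a routine adaptation.

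The paper avoids this entirely by quoting \cite[Thm.~3]{mmrs} (the Brauer--Galois paper referenced just before the conjecture): applied to $H$, all of whose irreducible characters lie in $B_p(H)$ since $\bO_{p'}(H)=1$, the hypothesis that no $\cJ_p$-invariant character of $H$ has degree divisible by $q$ yields that $H$ has a \emph{normal} Sylow $q$-subgroup. This is stronger than your target of a Sylow $p$-subgroup normalizing a Sylow $q$-subgroup of $H$: since $\bO_q(H)\leq\bO_{p'}(H)=1$, it forces $q\nmid|H|$, so the Sylow $q$-subgroups of $G$ lie in $\bO_{p'}(G)$, and Glauberman's lemma \cite[Thm.~3.23]{isa2} applied to the coprime action of $P\in\Syl_p(G)$ on $\bO_{p'}(G)$ produces a $P$-invariant Sylow $q$-subgroup of $G$. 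So either you invoke \cite[Thm.~3]{mmrs} at the point where you currently sketch the ``Galois Navarro--Wolf'' statement, or you must actually prove that statement, which would amount to reproving a main result of \cite{mmrs}.
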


\begin{proof}
Since $G$ is $p$-solvable, $\Irr(B_p(G))=\Irr(G/\bO_{p'}(G))$. Hence, $q$ does not divide the degree of any character in $\Irr_{\cJ_p}(G/\bO_{p'}(G))$. By \cite[Thm 3]{mmrs}, this implies that $G/\bO_{p'}(G)$ has a normal Sylow $q$-subgroup. Thus $q$ does not divide $G/\bO_{p'}(G)$. Let $P$ be a Sylow $p$-subgroup of $G$. Since $P$ acts coprimely on $\bO_{p'}(G)$, using \cite[Thm.~3.23]{isa2} we conclude that there exists a $P$-invariant Sylow $q$-subgroup of $G$. The first part of the result follows. For the second part we can argue as in the proof of Theorem A.
\end{proof}

It makes sense to ask if there is a version of Brauer's height zero conjecture for an arbitrary number of primes. This has an easier solution,  as a consequence of Brauer's height zero conjecture for two primes and \cite{mor}.

\begin{thm}
Let $G$ be a finite group and let $\pi=\{p_1,\dots,p_n\}$ be a set of primes. Then $G$ has a nilpotent Hall $\pi$-subgroup if and only if for every $i$ and every $j\neq i$, $p_j$ does not divide the degree of any irreducible character in $B_{p_i}(G)$.
\end{thm}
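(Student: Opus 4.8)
The plan is to reduce everything to the two-prime case, which is already available as Theorem \ref{thm:2pBHZ} together with its ``only if'' counterpart from \cite{mn}, and then to glue the pairwise conclusions together using the purely group-theoretic reduction of \cite{mor}. The guiding observation is that a Hall $\pi$-subgroup $H$ of $G$ is nilpotent precisely when it is a direct product $H=P_1\times\cdots\times P_n$ of pairwise-commuting Sylow subgroups $P_i\in\Syl_{p_i}(G)$; hence the existence of a nilpotent Hall $\pi$-subgroup is equivalent to the \emph{simultaneous} existence of a family of Sylow subgroups that commute in pairs.

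First I would record the two-prime statement in both directions in the form needed. For a fixed pair $i\neq j$, Theorem \ref{thm:2pBHZ} shows that if $p_j\nmid\chi(1)$ for all $\chi\in\Irr(B_{p_i}(G))$ and $p_i\nmid\chi(1)$ for all $\chi\in\Irr(B_{p_j}(G))$, then $G$ has a nilpotent Hall $\{p_i,p_j\}$-subgroup; conversely, the ``only if'' direction of \cite{mn} gives that a nilpotent Hall $\{p_i,p_j\}$-subgroup forces both divisibility conditions. Since the hypothesis of the theorem is symmetric in $i$ and $j$, these two facts assemble into a clean equivalence: the character condition holds for every ordered pair $(i,j)$ with $i\neq j$ if and only if $G$ possesses a nilpotent Hall $\{p_i,p_j\}$-subgroup for every pair $\{p_i,p_j\}\subseteq\pi$.

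It then remains to pass from the pairwise statement to the global one. For the easy implication, if $G$ has a nilpotent Hall $\pi$-subgroup $H=P_1\times\cdots\times P_n$, then each product $P_iP_j\leq H$ is manifestly a nilpotent Hall $\{p_i,p_j\}$-subgroup, so the pairwise conclusion is automatic. For the reverse implication I would invoke \cite{mor}, which asserts that $G$ has a nilpotent Hall $\pi$-subgroup as soon as it has a nilpotent Hall $\{p,q\}$-subgroup for every pair of primes $p,q\in\pi$. Combining this with the equivalence of the previous paragraph closes the chain of equivalences and yields the theorem.

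The step I expect to be the genuine obstacle is exactly this last reduction from pairwise to global nilpotent Hall subgroups. Although each pair of primes furnishes a pair of commuting Sylow subgroups, there is no a priori reason that these pairwise choices can be made compatibly across all of $\pi$ at once, and overcoming this simultaneity problem is where the real content lies. In the present write-up that difficulty is entirely delegated to \cite{mor}, so modulo that cited input the argument is a short formal combination of Theorem \ref{thm:2pBHZ} and \cite{mn}; were \cite{mor} unavailable, one would instead have to establish the pairwise-to-global assembly directly, presumably by induction on $|\pi|$ while controlling how the various commuting Sylow subgroups interact.
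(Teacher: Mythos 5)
Your proposal is correct and matches the paper's own proof essentially step for step: the ``only if'' direction via restriction to pairs and the two-prime result of \cite{mn}, the ``if'' direction via Theorem \ref{thm:2pBHZ} applied to each pair, and the pairwise-to-global assembly delegated to \cite{mor} (the paper cites \cite[Lem.~3.4]{mor} and the comment following it, which is exactly the reduction you describe). You also correctly identified that the only genuinely nontrivial content beyond the two-prime theorem lies in that final cited reduction.
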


\begin{proof}
Let $\tau=\{p_i,p_j\}$ be a subset of cardinality $2$ of $\pi$.
The ``only if" part follows immediately from the fact that if $G$ has a nilpotent Hall $\pi$-subgroup, then $G$ has a nilpotent Hall $\tau$-subgroup and the ``only if" part of Brauer's height zero conjecture for two primes \cite{mn}. Conversely, if for every $i$ and every $j\neq i$, $p_j$ does not divide the degree of any irreducible character in $B_{p_i}(G)$, then Theorem \ref{thm:2pBHZ}  implies that $G$ has nilpotent Hall $\tau$-subgroups for every $\tau\subseteq\pi$ with $|\tau|=2$. Now, we deduce that $G$ has a nilpotent Hall $\pi$-subgroup using  \cite[Lem.~3.4]{mor} (which relies on the classification) and the comment that follows it.
\end{proof}

\end{document}